\DeclareMathOperator\ord{ord}
\renewcommand{\deg}{{\rm deg}}
\newcommand{\N}{\mathbb N}
\newcommand{\R}{\mathbb R}
\newcommand{\Q}{\mathbb Q}
\newcommand{\C}{\mathbb C}
\newcommand{\Z}{\mathbb Z}
\newcommand{\B}{\mathcal B}
\newcommand{\cc}{\ensuremath{\mathbb{C}}}
\newcommand{\pp}{\ensuremath{\mathbb{P}}}
\newcommand{\rr}{\ensuremath{\mathbb{R}}}
\newcommand{\zz}{\ensuremath{\mathbb{Z}}}
\newcommand{\scrB}{\mathcal B}
\newcommand{\dsum}{\bigoplus}
\newcommand{\lot}{\text{l.o.t.}}
\theoremstyle{plain}
\newtheorem*{theorem*}{Theorem}
\newtheorem{theorem}{Theorem}[section]
\newtheorem{lemma}[theorem]{Lemma}
\newtheorem{proposition}[theorem]{Proposition}
\newtheorem{proclaim}{Claim}[theorem]
\newtheorem{defn}[theorem]{Definition}
\newtheorem{rem}[theorem]{Remark}
\newtheorem{cor}[theorem]{Corollary}
\newtheorem{prop}[theorem]{Proposition}
\theoremstyle{definition}
\newtheorem*{definition*}{Definition}
\newtheorem{definition}[theorem]{Definition}
\newtheorem{question}[theorem]{Question}
\theoremstyle{remark}
\newtheorem{example}[theorem]{Example}
\begin{document}

\onehalfspace

\title{How fast do polynomials grow on semialgebraic sets?}

\author{Pinaki Mondal}
\address{Pinaki Mondal, Weizmann Institute of Science, Israel}
\email{pinaki@math.toronto.edu}

\author{Tim Netzer}
\address{Tim Netzer, Universit\"at Leipzig, Germany}
\email{netzer@math.uni-leipzig.de}

\begin{abstract}
We study the growth of polynomials on semialgebraic sets. For this purpose we associate a graded algebra to the set, and address all kinds of questions about finite generation. We show that for  a certain class of sets, the algebra is finitely generated. This implies that the total degree of a polynomial determines its growth on the set, at least modulo bounded polynomials. We however also provide several counterexamples, where there is no connection between total degree and growth. In the plane, we give a complete answer to our questions for certain simple sets, and we  provide a systematic construction for examples and counterexamples.  Some of our counterexamples are of particular interest for the study of moment problems, since none of the existing methods seems to be able to decide the problem there. We finally also provide new three-dimensional sets, for which the algebra of bounded polynomials is not finitely generated.
\end{abstract}

\maketitle

\tableofcontents

\section{Introduction} Let $\R[x]$ be the polynomial algebra in $n$ variables $x=(x_1,\ldots,x_n)$. For $d\in \N$ we denote by $\R[x]_d=\left\{ p\in\R[x]\mid \deg(p)\leq d\right\}$ the finite-dimensional subspace of polynomials of total degree at most $d.$ For a set $S\subseteq \R^n$ let $$\B_d(S):=\left\{ p\in \R[x] \mid  p^2 \leq q\ \mbox{Êon } S, \mbox{ for some } q\in \R[x]_{2d}\right\}$$ denote the set of  those polynomials, that grow on $S$ as if they were of degree at most $d.$ Cleary $\R[x]_d\subseteq \B_d(S)$ for all $d$, and $\B_d(S)$ is closed under addition, which follows for example from the inequality $$(p+p')^2\leq (p+p')^2+(p-p')^2=2p^2+2p'^2.$$ $\B_0(S)$ is the algebra of {\it bounded polynomials} on $S$, and each $\B_d(S)$ carries the structure of a $\B_0(S)$-module. More general, $\B_d(S)\cdot \B_{d'}(S)\subseteq \B_{d+d'}(S),$ so we have a graded algebra  $$\B(S):=\bigoplus_{d\geq 0} \B_d(S).$$ $\B(S)$ can be identified with a subalgebra of $\R[x,t],$ wher
 e $t$ is a single new variable, by identifying $p\in \B_d(S)$ with $p\cdot t^d.$ Also note that $$\B_d(S_1\cup S_2)=\B_d(S_1)\cap \B_d(S_2)\ \mbox{Êand }\  \B(S_1\cup S_2)=\B(S_1)\cap \B(S_2)$$ holds for $S_1,S_2\subseteq \R^n.$ This follows from the fact that $q$ in the definition of $\B_d(S)$ can always assumed to be globally nonnegative. In fact you can always take some $C+D\Vert x\Vert^{2d}.$ Finally note that $\B_d(S)$ and thus $\B(S)$ do only depend on the behaviour of $S$ at infinity; if $S$ is changed inside of a compact set, no changes in $\B_d(S)$ and $\B(S)$ occur.
In this paper, we consider the following questions:

\begin{question}\label{one} Is $\mathcal B(S)$ a finitely generated algebra?\end{question}

\begin{question}\label{three} Is $\B_0(S)$ a finitely generated algebra? \end{question}
\begin{question}\label{two} Is every $\B_d(S)$ a finitely generated $\B_0(S)$-module?\end{question}
\begin{question}\label{four} If $\B_0(S)=\R,$ is every $\B_d(S)$ a finite-dimensional vector space? 
\end{question}

Note that a positive answer to Question \ref{one} yields a positive answer to all the other questions. Also note that  Question \ref{four}\ Êis just a special case of Question \ref{two}.  Let us start with some examples.
\begin{example}\label{ex1}
(1) Assume $S\subseteq \R^n$ contains a full-dimensional convex cone $K$ (e.g. $S=\R^n$ or $S=[0,\infty)^n$). For any $0\neq p\in\R[x]$ there is a point $0\neq a\in K$, on which the highest degree part of $p$ does not vanish. So on the half-ray through $a$, the polynomial $p^2$ cannot be bounded by a polynomial of degree smaller than $2\cdot \deg(p).$ This proves $\B_d(S)=\R[x]_d,$ and the answer to all questions is positive. Note that $\B(S)$ is generated by $t,x_1t,\ldots,x_nt.$

\smallskip\noindent
(2) Let $S=\left([0,1]\times \R\right) \cup \left(\R\times [0,1]\right)\subseteq\R^2$ be the union of a vertical and a horizontal strip. A polynomial $p$ belongs to $\B_d(S)$ if and only the degree of $p$ as both  a polynomial in $x_1$ and in $x_2$ is at most $d$. From this it is easy to see that the answer to all above questions is positive. Note that $\B_d(S)=\R[x]_d$ is not true here, since for example $x_1x_2\in \B_1(S),$ since $x_1^2x_2^2\leq x_1^2+x_2^2$ on $S$. However, $\B(S)$ is generated by $t,x_1t,x_1x_2t$ and $x_2t$.

\smallskip\noindent
(3) If $S$ is bounded, then $\B_0(S)=\B_d(S)=\R[x]$ for all $d$, and the answer to Question \ref{one} is obviously yes. In fact $\mathcal B(S)=\R[x,t]$ here.

\smallskip\noindent
(4) The set $S$ should be semialgebraic, if a positive answer to the above questions is to be expected. Indeed, consider $S=\left\{ (a,b)\in\R^2\mid 0\leq a, \exp(a)\leq b \right\}.$ Then for $p_d:=\sum_{i=0}^d \frac{1}{i!}x_1^i$  and $(a,b)\in S$ we have $$0\leq  p_d(a,b) \leq \exp(a) \leq q(a,b),$$ with $q=x_2$. So $p_d\in \B_1(S)$ for all $d\in\N$. On the other hand, $\B_0(S)=\R$ is easily checked, so  Question \ref{four} has a negative answer.

(5) Even in the semialgebraic case, the answer to Question \ref{four} can be negative. This example is Example 4.2 from \cite{cimane}. Consider $S=\left\{ (a,b)\in \R^2 \mid b^2(1-a^2)\geq 0\right\}$. So $S$ is the union of a vertical strip  and the $x_1$-axis. One checks that $\B_0(S)=\R$ holds.  On the other hand, $(x_1^dx_2)^2\leq x_2^2$ on $S$, so $x_1^dx_2\in \B_1(S)$ for all $d.$ So $\B_1(S)$ is not finite-dimensional, and Question \ref{four} has a negative answer. In particular, $\B(S)$ is not finitely generated. This example is however somewhat pathological, since $S$ has a lower-dimensional part, i.e. is not regular.

(6) Another pathology arising from non-regular sets is the following. Let $S$ be the $x_1$-axis in $\R^2$ alone. Then $\B_0(S)$ is not a finitely generated algebra, as one easily checks. So also the answer to Question \ref{one}Ê\ is negative. On the other hand, each $\B_d(S)$ is a finitely generated $\B_0(S)$-module, generated by $1,x_1,\ldots,x_1^d.$ 
\end{example}

We see that we should restrict to regular semialgebraic sets from now on, i.e. sets containing a dense open subset.
Let us recall what is actually known concerning the above questions. It seems like Question \ref{one} has not been explicitly studied for semialgebraic sets yet. The paper \cite{plsc} deals with Question \ref{three} and shows that the answer is yes in the two-dimensional regular case, and false in general for higher dimensions. The paper \cite{kr} provides an explicit three-dimensional such counterexample, based on a counterexample to Hilbert's 14th Problem of Nagata.
In the context of {\it moment problems}, Question \ref{four} has been extensively studied.  The works  \cite{kuma}, \cite{ma}, \cite{ne},  \cite{posc},   \cite{vi} all give  positive answers,  for large classes of sets.  To avoid confusion, we note that the question that is anwered in these papers is in fact the following: 

\begin{question}\label{five}
If $\B_0(S)=\R$, does there exist a function $\psi\colon\N\rightarrow\N$, such that for any two polynomials $p,q\in \R[x]$ with $p,q\geq 0 $ on $S$, one has $$\deg(p)\leq \psi\left(\deg(p+q)\right)?$$
\end{question}

\begin{lemma} \label{four=five}
For any set $S\subseteq \R^n$, Question \ref{four} and Question \ref{five} are equivalent. 
\end{lemma}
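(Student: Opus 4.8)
The plan is as follows. Both Question \ref{four} and Question \ref{five} are conditional statements with the same hypothesis $\B_0(S)=\R$; if this hypothesis fails, both are vacuously true and there is nothing to prove. So I may assume $\B_0(S)=\R$, although in fact the argument below will not use this and will prove directly that the \emph{conclusion} of Question \ref{four} (every $\B_d(S)$ is a finite-dimensional vector space) is equivalent to the \emph{conclusion} of Question \ref{five} (existence of the function $\psi$), for an arbitrary $S\subseteq\R^n$. The only general fact I will need is the elementary observation that a linear subspace $V\subseteq\R[x]$ is finite-dimensional if and only if $\sup\{\deg f\mid f\in V\}<\infty$: for ``only if'' take a finite basis and the maximum of the degrees of its members, and for ``if'' note that $V\subseteq\R[x]_N$ with $\R[x]_N$ finite-dimensional.

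For the implication Question \ref{four} $\Rightarrow$ Question \ref{five}, suppose every $\B_d(S)$ is finite-dimensional and define $\psi(m):=\max\{\deg f\mid f\in\B_m(S)\}$, which is finite by the observation above (and satisfies $\psi(m)\geq m$ since $\R[x]_m\subseteq\B_m(S)$). Now let $p,q\in\R[x]$ with $p,q\geq 0$ on $S$ and put $m:=\deg(p+q)$. On $S$ we have $0\leq p\leq p+q$, and since squaring is monotone on the nonnegative reals this gives $p^2\leq(p+q)^2$ on $S$. As $(p+q)^2\in\R[x]_{2m}$, this shows $p\in\B_m(S)$, whence $\deg(p)\leq\psi(m)=\psi(\deg(p+q))$, as required.

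For the implication Question \ref{five} $\Rightarrow$ Question \ref{four}, first replace $\psi$ by $m\mapsto\max_{0\leq k\leq m}\psi(k)$, which is still $\N\to\N$ and still works (it only weakens the bound), so that we may assume $\psi$ is nondecreasing. Fix $d$ and let $p\in\B_d(S)$, so that $p^2\leq q$ on $S$ for some $q\in\R[x]_{2d}$. Since $p^2\geq 0$ everywhere, on $S$ we have $0\leq p^2\leq q$; hence the two polynomials $p^2$ and $q-p^2$ are both $\geq 0$ on $S$, and $p^2+(q-p^2)=q$ has degree at most $2d$. Applying Question \ref{five} to this pair yields $2\deg(p)=\deg(p^2)\leq\psi(\deg q)\leq\psi(2d)$, so $\deg(p)\leq\psi(2d)/2$. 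Thus $\B_d(S)\subseteq\R[x]_{\lfloor\psi(2d)/2\rfloor}$ is finite-dimensional.

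I do not anticipate a genuine obstacle here; the lemma is essentially a matter of unwinding the definitions. The two points that need a moment's care are (i) that the inequality $0\leq p\leq p+q$ \emph{on} $S$ may be squared in order to place $p$ into the correct module $\B_m(S)$ in the first implication, and (ii) that the polynomial $q$ witnessing $p\in\B_d(S)$ automatically satisfies $q\geq p^2\geq 0$ on $S$, so that $q-p^2$ is a legitimate nonnegative-on-$S$ polynomial to feed into Question \ref{five} in the second implication; the passage to a nondecreasing $\psi$ (equivalently, replacing $\psi(\deg q)$ by a maximum over $\deg q\leq 2d$) is a harmless normalization.
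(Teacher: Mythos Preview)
Your proof is correct and follows essentially the same route as the paper: in both directions you and the authors define $\psi$ as the maximal degree in $\B_d(S)$, use $0\leq p\leq p+q$ on $S$ to place $p$ in $\B_{\deg(p+q)}(S)$, and for the converse apply Question~\ref{five} to the pair $p^2,\,q-p^2$ after making $\psi$ nondecreasing. Your write-up is in fact slightly more explicit than the paper's (you spell out the squaring step $p^2\leq(p+q)^2$ that the paper leaves implicit).
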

\begin{proof}
First assume that each $\B_d(S)$ is a finite dimensional vector space, and let $\psi(d)$ be the maximal degree of a polynomial in $\B_d(S)$. If $p,q$ are nonnegative on $S$, then $p\leq p+q$ on $S,$ so $p\in \B_{\deg(p+q)}(S)$.  This shows $\deg(p)\leq \psi\left( \deg(p+q)\right).$ Let conversely such a function $\psi$ be given. Obviously $\psi$ can be assumed to be non-decreasing. If $p^2\leq q$ for some $q\in \R[x]_{2d},$ then $q-p^2$ and $p^2$ are nonnegative on $S$.
We obtain $\deg(p^2) \leq \psi\left( \deg(p^2 +(q-p^2))\right)\leq \psi(2d).$ This shows that $\B_d(S)$ is finite dimensional.\end{proof}

Let us briefly recall some facts about the moment problem. Given a linear functional $\varphi\colon\R[x]\rightarrow \R$, one wants to determine whether it has a representing measure $\mu$, i.e. whether $$\varphi(p)=\int_{\R^n}p\ d\mu$$ holds for all $p\in\R[x].$ Also the support of $\mu$ is of interest here. A result by Haviland \cite{havi} states that $\varphi$ has a representing measure supported on a set $S\subseteq \R^n$ if and only if $\varphi(p)\geq 0$ holds for all polynomials $p$ which are nonnegative as functions on $S$. Unfortunately, describing all nonnegative polynomials on $S$ is a  hard problem. So Haviland's theorem becomes particularly helpful  if the nonnegativity condition can be weakend. Towards this goal one resctricts to {\it basic closed semialgebraic sets} $$S=\{a\in\R^n\mid p_1(a)\geq 0,\ldots,p_r(a)\geq 0\},$$ where $p_1,\ldots,p_r\in\R[x].$ The polynomials which are obviously nonnegative on $S$ are of the form $q^2$ and $q^2p_i$ for some $q\in\R[x].$
  
We say that $p_1,\ldots,p_r$ {\it solve the moment problem for $S$} if the conditions $$  \varphi(q^2p_i)\geq 0 \qquad \forall q\in \R[x], \ i=0,\ldots,r$$ (where $p_0=1$) are enough to ensure the existence of a representing measure for $\varphi$ on $S$. Such a weakened positivity condition on $\varphi$  can then for example be checked by a series of semidefinite programs (see for example \cite{ma}). The celebrated result of Schm\"udgen \cite{sch1} implies that if $S$ is bounded, then the finitely many products $p_1^{e_1}\cdots p_r^{e_r}$ ($e_i\in\{0,1\}$) always solve the moment problem for $S$. Another result of Schm\"udgen \cite{sch2} (see also \cite{ma,ne2}) provides a method to reduce the dimension in the moment problem. Given a nontrivial bounded polynomial $p\in\B_0(S)$, it is enough to check the moment problem on all fibres $S\cap \{p=\lambda\}$ of $p$ in $S$. Since the problem is usually easier in lower dimensions, this is very helpful. Now the significance of Questi
 on \ref{five} for the moment problem is the following. If $\B_0(S)= \R$  and Question \ref{five} has a positive answer, then the moment problem is not solvable, at least in dimension $\geq 2$, by a result of Scheiderer \cite{sc}. So it doesn't matter that the reduction result of Schm\"udgen cannot be applied, since the problem is not solvable anyway.  Since Question \ref{five} has a positive answer in so many cases, it has been asked whether the answer is always yes, for {\it regular} semialgebraic sets, i.e. sets containing a dense open subset (see for example \cite{pl}).  This would mean there is no gap between the results of Schm\"udgen and Scheiderer. In this paper we show, among other things, that this is false. In particular, deciding the moment problem for our counterexamples seems to call for completely new methods. 

Our contribution is the following. In Section \ref{tent} we show that Question \ref{one} has a positive answer for a large class of sets in arbitrary dimension, built of so-called  standard tentacles. 
In Section \ref{count} we provide a first regular two-dimensional example, for which Question \ref{four} (and thus any other of the questions as well) has a negative answer. We give a completely elementary and constructive proof. In Section \ref{plane} we use more elaborate techniques to examine planar sets. For certain simple sets (namely sets with a single `tentacle'), we give complete answers to our questions \ref{one}--\ref{four}, and a partial solution to the moment problem. We provide a systematic way to produce more examples and counterexamples to our questions. The results show that in principle anything can happen, even for regular sets in the plane. The methods are based on the study of key forms for semidegree functions and corresponding (algebraic) compactifications of $\cc^2$, mostly from \cite{contractibility, non-negative-valuation, sub1}. In Section \ref{bound} we show that the algebra $\B(S)$ can always be interpreted as the algebra of bounded polynomials on 
 a higher dimensional set. In this way, we get more three-dimensional and explicit counterexamples to Question \ref{three}.

We also remark that it is possible to extend the analysis in Sections \ref{single-subsection} and \ref{general-subsection} of the `subdegrees' associated to planar sets to higher dimensional subalgebraic sets. It is however more technical in nature and will be part of a future work. 

\section{Standard tentacles}\label{tent}
In this section we prove that Question \ref{one}, the strongest of the above questions, has a positive answer for a large class of sets. We recall the definition of a {\it standard tentacle} from \cite{ne}.
\begin{definition}
For $z=(z_1,\ldots,z_n)\in\Z^n,$ a {\it standard $z$-tentacle} is a set $$\left\{ \lambda^zb:=(\lambda^{z_1}b_1,\ldots,\lambda^{z_n}b_n)\mid \lambda\geq 1, b\in B\right\}$$ where $B\subseteq (\R\setminus\{0\})^n$ is a compact semialgebraic set with nonempty interior. We call $B$ a {\it base} of the tentacle.
\end{definition}

\noindent
Any  $z\in \Z^n$  defines a weighted degree $\deg_z$ on $\R[x],$ by  assigning degree $z_i$ to the variable $x_i.$ We set $$\varphi_z:= \max\{0, z_1,\ldots,z_n\}$$ and note that $\max \{ \deg_z(p)\mid p\in \R[x]_d\}= d\cdot \varphi_z.$

For finite unions of standard tentacles, the modules $\B_d(S)$ from the last section can be described via these weighted degrees. For this let $z^{(1)},\ldots, z^{(m)}\in \Z^n$ be given. We write $\deg_i$ instead of $\deg_{z^{(i)}}$ and $\varphi_i$ instead of $\varphi_{z^{(i)}}$.

\begin{proposition}\label{tendeg} Assume $S\subseteq \R^n$ is a finite union of standard tentacles, corresponding to the directions $z^{(1)},\ldots,z^{(m)}\in\Z^n.$ Then for all $d\in\N$ $$\B_d(S)= \left\{ p\in \R[x] \mid \deg_i(p) \leq d\cdot \varphi_i \quad \mbox{for } i=1,\ldots,m\right\}.$$ In particular, $\B_d(S)$ is spanned as a vector space by the monomials contained in it.
\end{proposition}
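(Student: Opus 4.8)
The plan is to prove the two inclusions separately, and to reduce everything to the case of a single standard tentacle by using the identity $\B_d(S_1\cup S_2)=\B_d(S_1)\cap\B_d(S_2)$ noted in the introduction; since the right-hand side of the claimed formula is also an intersection over $i=1,\ldots,m$, it suffices to treat one direction $z=z^{(i)}$ at a time, with tentacle $T=\{\lambda^z b\mid \lambda\ge 1,\ b\in B\}$.

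For the inclusion $\B_d(T)\subseteq\{p\mid \deg_z(p)\le d\varphi_z\}$, I would argue by contraposition: suppose $\deg_z(p)=e>d\varphi_z$. Write $p=\sum_\alpha c_\alpha x^\alpha$ and let $p_z$ be the sum of the terms with $\langle z,\alpha\rangle=e$ (the leading $z$-weighted form); this is a nonzero polynomial. Along the curve $\lambda\mapsto\lambda^z b$ for fixed $b\in B$ we get $p(\lambda^z b)=p_z(b)\lambda^{e}+(\text{lower powers of }\lambda)$. Since $B$ has nonempty interior and $p_z\not\equiv 0$, we may pick $b\in B$ with $p_z(b)\neq 0$, so $p(\lambda^z b)^2$ grows like $\lambda^{2e}$ as $\lambda\to\infty$. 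On the other hand any $q\in\R[x]_{2d}$ satisfies $\deg_z(q)\le 2d\varphi_z<2e$, so $q(\lambda^z b)=O(\lambda^{2d\varphi_z})$, which is dominated. Hence no such $q$ can bound $p^2$ on $T$, i.e. $p\notin\B_d(T)$.

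For the reverse inclusion, suppose $\deg_z(p)\le d\varphi_z$ for all the relevant $z=z^{(i)}$; I want a single globally nonnegative $q\in\R[x]_{2d}$ with $p^2\le q$ on all of $S$. As remarked in the introduction, on each tentacle it is enough to find a bound of the form $C+D\Vert x\Vert^{2d}$. On the tentacle $T$ associated to $z$ we have, for each monomial $x^\alpha$ appearing in $p$, the estimate $|x^\alpha|\le (\max_i|x_i|)^{\deg_z(\alpha)/\varphi_z}\cdot(\text{const depending on }B)$ when $z$ has a positive coordinate — more carefully, on $T$ one checks $|(\lambda^z b)^\alpha|=|b^\alpha|\lambda^{\langle z,\alpha\rangle}\le C_B\,\lambda^{\langle z,\alpha\rangle}$ while $\Vert\lambda^z b\Vert\ge c_B\lambda^{\varphi_z}$ (using $\varphi_z=\max\{0,z_1,\dots,z_n\}\ge \max_j z_j$ and $B$ bounded away from $0$ and $\infty$), so $|x^\alpha|\le C_B'(1+\Vert x\Vert)^{d}$ on $T$ whenever $\langle z,\alpha\rangle\le d\varphi_z$. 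Here one must handle the case $\varphi_z=0$ (all $z_j\le 0$): then $\lambda^z b$ stays in a compact set, $\B_0$-boundedness is automatic, and the condition $\deg_z p\le 0$ is vacuous for the growth estimate. Summing over the finitely many monomials of $p$ gives $|p|\le A(1+\Vert x\Vert)^{d}$ on $T$, hence $p^2\le A^2(1+\Vert x\Vert)^{2d}\le q$ for a suitable $q=C+D\Vert x\Vert^{2d}\in\R[x]_{2d}$; taking the max of the finitely many constants over $i=1,\ldots,m$ gives one $q$ that works on all of $S$, so $p\in\B_d(S)$.

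Finally, the "in particular" clause: the description just proved shows that $\B_d(S)$ is cut out by the conditions $\deg_i(p)\le d\varphi_i$, and each such condition is a condition on the Newton support of $p$ — it holds for $p$ iff it holds for every monomial of $p$ separately (since $\deg_i$ of a sum of monomials is the max of the $\deg_i$ of the monomials, there is no cancellation in the relevant inequality). Therefore $p\in\B_d(S)$ iff every monomial of $p$ lies in $\B_d(S)$, which says exactly that $\B_d(S)$ is spanned by the monomials it contains. I expect the main technical obstacle to be the bookkeeping in the reverse inclusion: getting the estimate $|x^\alpha|\le C(1+\Vert x\Vert)^d$ uniformly on $T$ with the right constants, and correctly treating the degenerate directions with $\varphi_z=0$ (and, more generally, coordinates $z_j\le 0$ that make some $|x_j|=|\lambda^{z_j}b_j|$ small rather than large). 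Everything else is a growth comparison along the explicit curves $\lambda\mapsto\lambda^z b$.
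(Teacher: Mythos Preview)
Your argument is correct and follows essentially the same strategy as the paper: reduce to a single tentacle, parametrize it by $\lambda\mapsto\lambda^z b$, and compare $\lambda$-growth rates. The ``$\subseteq$'' direction is identical to the paper's. For ``$\supseteq$'' there is a minor tactical difference worth noting: the paper first observes that the right-hand side is spanned by monomials and that $\B_d(S)$ is a vector space, so it suffices to treat a single monomial $x^\beta$; it then bounds $x^{2\beta}$ on the tentacle by an explicit square $(Dx^\alpha)^2$ with $|\alpha|\le d$ and $z\alpha^t=d\varphi_z$, rather than by $C+D\Vert x\Vert^{2d}$. This sidesteps your norm estimate $\Vert\lambda^z b\Vert\ge c_B\lambda^{\varphi_z}$ and the separate treatment of $\varphi_z=0$ entirely (when $\varphi_z=0$ one simply takes $\alpha=0$). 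Your route works fine, but the paper's monomial reduction is a bit cleaner and makes the ``in particular'' clause come for free as a byproduct of the proof rather than as a separate observation at the end.
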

\begin{proof}"$\subseteq$": Let $p\in \B_d(S)$ with $p^2\leq q$ on $S$, for some $q\in \R[x]_{2d}.$ Since $q(\lambda^{z^{(i)}}b)$ is of degree at most $2d\cdot \varphi_i$ in $\lambda,$  it follows that $\deg_i(p)\leq d\cdot \varphi_i.$  At this point we need that tentacles have nonempty interior; there is some curve $\lambda^{z^{(i)}}b$ in the tentacle, on which $p$ grows with $\deg_i(p).$
"$\supseteq$": Since $\B_d(S)$ is a vector space, we can assume that $p=x^\beta$ is a monomial. We can also assume $m=1.$  We have for $\lambda\geq 1$ $$p^2(\lambda^{z^{(1)}}b)=b^{2\beta}\lambda^{2z^{(1)}\beta^t}\leq C\lambda^{2\varphi_1 \cdot d},$$  for all $b$ in the base $B$ of the tentacle. Choose $\alpha$ with $\vert\alpha\vert \leq d$ and $z^{(1)}\alpha^t=\varphi_1 \cdot d.$ Then choose $D>0$ such that $C\leq D^2b^{2\alpha}$ for all $b\in B.$ We obtain $p^2\leq (Dx^{\alpha})^2$ on $S$, and thus $p\in \B_d(S)$. \end{proof}

In \cite{ne} Theorem 5.4 it was shown that Question \ref{five} has a positive answer, if $S$ is a finite union of standard tentacles. We improve upon this now, while also simplifying the proof significantly:

\begin{theorem}
If $S$ is a finite union of standard tentacles, then $\B(S)$ is finitely generated.
\end{theorem}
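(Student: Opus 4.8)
The plan is to reduce the finite generation of $\B(S)$ to a combinatorial statement about lattice points in a rational polyhedral cone, and then invoke Gordan's lemma. By Proposition~\ref{tendeg}, the graded algebra $\B(S)$, viewed inside $\R[x,t]$ by sending $p\in\B_d(S)$ to $p\,t^d$, has a vector space basis consisting of monomials $x^\beta t^d$ with $\beta\in\N^n$, $d\in\N$ subject to the $m$ linear inequalities $z^{(i)}\beta^{\,t}\le d\cdot\varphi_i$ for $i=1,\ldots,m$. Since the product of two such monomials again satisfies the inequalities (this is just additivity of $\deg_i$ and the fact that $\B_d\cdot\B_{d'}\subseteq\B_{d+d'}$), the set $$ C:=\left\{(\beta,d)\in\N^{n+1}\ \middle|\ z^{(i)}\beta^{\,t}\le d\cdot\varphi_i,\ i=1,\ldots,m\right\} $$ is a submonoid of $\N^{n+1}$, and $\B(S)$ is precisely the monoid algebra $\R[C]$.

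First I would make this identification precise: check that $C$ is closed under addition and contains $0$, and that the monomials $x^\beta t^d$ for $(\beta,d)\in C$ span $\B(S)$ and are linearly independent, so that $\B(S)\cong\R[C]$ as graded $\R$-algebras. Next I would observe that $C$ is exactly the intersection of the lattice $\Z^{n+1}$ with the rational polyhedral cone $$ \sigma:=\left\{(\beta,d)\in\R^{n+1}\ \middle|\ \beta_j\ge 0\ (j=1,\ldots,n),\ d\ge 0,\ z^{(i)}\beta^{\,t}\le d\cdot\varphi_i\ (i=1,\ldots,m)\right\}, $$ which is cut out by finitely many rational linear inequalities. By Gordan's lemma, $\sigma\cap\Z^{n+1}$ is a finitely generated monoid; pick generators $(\beta^{(1)},d_1),\ldots,(\beta^{(N)},d_N)$. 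Then the corresponding monomials $x^{\beta^{(k)}}t^{d_k}$ generate $\R[C]=\B(S)$ as an $\R$-algebra, which is the claim.

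There is one genuine subtlety to address, namely that the monomials $x^\beta t^d\in\B(S)$ with $\beta\in\N^n$ might a priori only span $\B(S)$ rather than give it the structure of the monoid algebra on the nose — but linear independence of distinct monomials in $\R[x,t]$ is automatic, so this causes no trouble, and the fact that $C$ is a submonoid is exactly what makes the monoid-algebra description valid. One should also note that the inequalities defining $C$ have integer (hence rational) coefficients precisely because $z^{(i)}\in\Z^n$ and $\varphi_i=\max\{0,z^{(i)}_1,\ldots,z^{(i)}_n\}\in\N$, which is what licenses the use of Gordan's lemma.

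The main obstacle, such as it is, is not conceptual but bookkeeping: one must be careful that Proposition~\ref{tendeg} really does give a \emph{monomial} basis for each graded piece simultaneously compatible across degrees, so that the associated object is genuinely an affine semigroup algebra and not merely a graded vector space with a monomial spanning set in each degree. Once that is pinned down, everything follows from the standard toric dictionary. I would therefore structure the proof as: (i) $\B(S)=\R[C]$ for the monoid $C$; (ii) $C=\sigma\cap\Z^{n+1}$ for a rational polyhedral cone $\sigma$; (iii) Gordan's lemma finishes it.
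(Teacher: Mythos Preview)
Your proposal is correct and follows essentially the same route as the paper: identify the monomials in $\B(S)$ with the lattice points of a rational polyhedral cone via Proposition~\ref{tendeg}, then invoke Gordan's lemma (the paper attributes this to Hilbert) to conclude the semigroup is finitely generated. Your write-up is somewhat more careful about the monoid-algebra identification, but the argument is the same.
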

\begin{proof} We use the same notation as before. Consider the set $$M:=\{ (\alpha,d)\in\N^{n+1}\mid z^{(i)}\alpha^t\leq d\cdot \varphi_i  \mbox{ for }  i=1,\ldots,m\}.$$ By Proposition \ref{tendeg}, a polynomial from $\R[x,t]$ belongs to $\B(S)$ if and only of all its monomial do, and a monomial $x^\alpha t^d$ belongs to $\B(S)$ if and only if $(\alpha,d)\in M$. The lattice points in  a rational convex cone form a finitely generated semigroup, by a well-known result of Hilbert. So if $(\alpha_1,d_1),\ldots,(\alpha_r,d_r)$ generate $M$, then the monomials $x^{\alpha_i}t^{d_i}$ generate $\B(S)$.
\end{proof}

\section{A first counterexample}\label{count}

In this section we construct a first {\it regular} semialgebraic set $S$, for which Question \ref{four}, and thus all the other questions as well, have a negative answer. We will see more examples later, but for this one we give a completely elementary and constructive proof. 
We consider two sets $$S_1=\left\{ (a,b)\in\R^2\mid 1\leq a, 1\leq a^3b+ a^6 -a \leq 2\right\}$$
$$S_2=\left\{ (a,b)\in\R^2\mid 1\leq a, 1\leq a^3b-a^6-a\leq 2\right\}$$
and set $S:=S_1\cup S_2.$ Note that $S$ is basic closed semialgebraic, i.e. definable by finitely many simultaneous polynomial inequalities. In fact if $p=x^3y+x^6-x$ and $q=x^3y-x^6-x$, then $S=\mathcal S(x-1, -(2-p)(p-1)(2-q)(q-1)).$
\bigskip
\begin{center}
\includegraphics[scale=0.4]{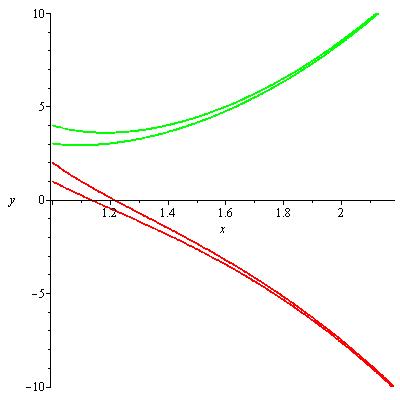}
\end{center}

\begin{theorem}
In the above example we have $\B_0(S)=\R$, but already $\B_1(S)$ is of infinite dimension.
\end{theorem}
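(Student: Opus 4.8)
The plan is to analyze the two sets $S_1$ and $S_2$ separately and then combine. Since $S = S_1 \cup S_2$, we have $\B_d(S) = \B_d(S_1) \cap \B_d(S_2)$ for every $d$. So I would first understand which polynomials are bounded, resp.\ grow slowly, on $S_1$ alone (and symmetrically on $S_2$). On $S_1$ the defining relation $1 \le a^3 b + a^6 - a \le 2$ says that along $S_1$ the quantity $a^3 b$ stays within bounded distance of $a - a^6$; equivalently $b = a^{-3}(c - a^6 + a)$ for some $c \in [1,2]$, so $S_1$ is (up to a compact modification, which by the remarks in the introduction does not affect $\B_d$) essentially the curve-neighborhood $b \approx -a^3 + a^{-2}$, thickened by an $a^{-3}$-wide band. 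The key computation is then: for a monomial $x^i y^j$, how fast does $(x^i y^j)^2$ grow on $S_1$? Substituting $y \approx -x^3 + x^{-2}$ one sees that $x^i y^j$ behaves like $x^i(-x^3 + x^{-2})^j$, whose leading term in $x$ is $(-1)^j x^{i+3j}$; more generally on $S_1$ a polynomial $p(x,y)$ grows like $x^{\delta_1(p)}$ where $\delta_1$ is the weighted-type degree obtained by substituting the curve. The thickness of the band (width $\sim a^{-3}$ in the $b$-direction) must be checked not to create faster growth — this is the analogue of the ``nonempty interior'' issue in Proposition~\ref{tendeg}, and it forces the band to be narrow enough (here $a^{-3}$, matching the $a^3$ coefficient of $y$) that the growth rate is still governed by the curve.

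Next I would pin down $\B_0(S)$. A polynomial $p$ is bounded on $S$ iff it is bounded on both $S_1$ and $S_2$; bounded on $S_1$ means $\delta_1(p) \le 0$, i.e.\ the restriction of $p$ to the curve $b = -a^3 + a^{-2} + \cdots$ tends to $0$ or a constant as $a \to \infty$. The point of using two curves $b \approx -a^3 \pm a^6 \cdot a^{-3}$... — more precisely the $+a^6$ versus $-a^6$ — is that a polynomial bounded along \emph{both} near-parallel curves (which agree to high order but differ in a specific coefficient) must have all the ``top'' coefficients of its curve-expansion vanish, and an elementary argument on these expansions should force $p$ to be constant. This is where I expect the main obstacle to lie: setting up the right notion of the expansion of $p$ along each $S_i$ and showing that the two conditions together are rigid enough to kill everything but constants, while keeping the argument ``completely elementary and constructive'' as promised. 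I would do this by explicitly writing $p$ as a polynomial in $x$ and in $u := x^3 y + x^6 - x$ (resp.\ $v := x^3 y - x^6 - x$), using that on $S_i$ the variable $u$ (resp.\ $v$) is confined to $[1,2]$, and reading off growth in $x$ from the $x$-degree of the leading coefficient.

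Finally, for the infinite-dimensionality of $\B_1(S)$ I would exhibit an explicit infinite family of linearly independent polynomials in $\B_1(S)$. The natural candidates are built from the bounded-on-$S_i$ combinations: on $S_1$ the polynomial $p = x^3 y + x^6 - x$ satisfies $1 \le p \le 2$, and on $S_2$ the polynomial $q = x^3 y - x^6 - x$ satisfies $1 \le q \le 2$; the product relation $S = \mathcal S(x-1, -(2-p)(p-1)(2-q)(q-1))$ is exactly this. Then a polynomial like $x \cdot (p - q) = x(2x^6) = 2x^7$ is \emph{not} helpful directly, but something of the form $x^k$ times a polynomial that is bounded on $S_1$ and on $S_2$ but not globally will lie in $\B_1(S)$ precisely when multiplying by that bounded factor does not push the growth past degree $1$ on either piece. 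Concretely I expect the family $\{\, x^{3k} y^k \cdot r(x,y) : k \ge 1 \,\}$ or, more cleanly, polynomials of the form $x \cdot f_k$ where $f_k$ is a product of $k$ bounded-on-$S$ factors engineered so that $\delta_i(x f_k) \le \varphi_i$ for $i = 1,2$ — here each bounded factor contributes growth $\le 0$ on each $S_i$ but the family $\{f_k\}$ spans an infinite-dimensional space because the factors like $p-1$, $2-q$ etc.\ are algebraically independent modulo the ideal of relations on $S$. Verifying (i) each member lies in $\B_1(S)$, i.e.\ its square is dominated on $S$ by a fixed-degree-$2$ polynomial — e.g.\ by $C + D(x^2+y^2)$ — using the band-width estimate from the first step, and (ii) the members are linearly independent as polynomials, then completes the proof. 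The main work, and the likely obstacle, is item (i): controlling the square of these high-degree polynomials uniformly across the $a^{-3}$-thick band on both tentacles.
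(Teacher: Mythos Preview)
Your setup is correct: the parametrization of $S_1$ as $(\lambda, r\lambda^{-3} - \lambda^3 + \lambda^{-2})$, the reduction to studying the $(1,-3)$-degree of $p_\pm := p(x, y \pm x^3 + x^{-2})$, and the observation that $\B_d(S) = \B_d(S_1) \cap \B_d(S_2)$ all match the paper. But your construction of the infinite family in $\B_1(S)$ has a genuine gap, and it is exactly the point where the paper's proof does real work.

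You propose to build elements of $\B_1(S)$ as $x \cdot f_k$ where ``$f_k$ is a product of $k$ bounded-on-$S$ factors''. This cannot work: you are simultaneously trying to prove $\B_0(S) = \R$, which says precisely that the only bounded-on-$S$ polynomials are constants, so there are no nontrivial factors to multiply. Your earlier suggestion --- using the polynomials $p = x^3y + x^6 - x$ and $q = x^3y - x^6 - x$ --- has the opposite problem: $p$ is bounded on $S_1$ but grows like $x^6$ on $S_2$ (and symmetrically for $q$), so products involving them will not stay in $\B_1$ on both tentacles. The whole difficulty of the example is that no single polynomial is bounded on $S$ except constants, yet there is an infinite-dimensional space of polynomials whose growth is $\leq 1$ on both tentacles; these two facts pull in opposite directions and your proposal does not explain how to thread that needle.

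The paper's construction is quite different and much more delicate. One starts with $p^{(1)} = (y^2 - x^6)^m$ for arbitrary $m$, computes the Newton polytope of $p^{(1)}_\pm$, and identifies the monomials of $(1,-3)$-degree $> 1$. These ``bad'' monomials are then killed one at a time by adding correction terms $c \cdot x^k y^l (y^2 - x^6)^{m-i}$. The entire argument hinges on a sign analysis: one checks that at each bad monomial the coefficients in $p_+$ and $p_-$ differ by the \emph{same} sign as the corresponding coefficients of the correction term, so a single choice of $c$ cancels the monomial in both $p_+$ and $p_-$ simultaneously. This parity coincidence (tracked via the exponent $(-1)^{m+l}$ on the relevant edge of the Newton polytope) is the key idea you are missing. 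The proof that $\B_0(S) = \R$ is the flip side of the same computation: the leading $(1,3)$-homogeneous part of any candidate must be $(y^2 - x^6)^m$, and then the degree-$1$ monomial $x^{3m-2}y^{m-1}$ has a sign \emph{obstruction} --- the only available correction term has the wrong parity --- so it cannot be cancelled in both $p_+$ and $p_-$, forcing $m = 0$.
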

\begin{proof}
First note that $S_1$ consists precisely of the points $(\lambda, r\lambda^{-3}-\lambda^3+\lambda^{-2})$ for $\lambda\geq 1$ and $r\in [1,2].$ So if for $p\in \R[x_1,x_2]$ the $\lambda$-degree of $p(\lambda, r\lambda^{-3}-\lambda^3+\lambda^{-2})$ is at most $d$, then $p$ belongs to $\B_d(S_1)$. In fact, $p^2$ can then be bounded by some $C+Dx_1^{2d}$ on $S_1$. Note that $\B_0(S_1)$ consists precisely of those $p$ where this $\lambda$-degree is $\leq 0.$ 

If $q(x,y):=p(x,y-x^3+x^{-2})$, then $p(\lambda, r\lambda^{-3} -\lambda^3+\lambda^{-2})=q(\lambda, r\lambda^{-3}).$ So the $\lambda$-degree of $p(\lambda,r\lambda^{-3}-\lambda^3+\lambda^{-2})$ equals  $\deg_{(1,-3)}$ of the Laurent polynomial $p(x,y-x^3+x^{-2}).$
For $S_2$ the same is true with $+x^3$ instead of $-x^3$ everywhere. The claim will thus follow if we construct polynomials $p\in\R[x,y]$ of arbitrarily high degree, such that the $(1,-3)$-degree of both Laurent polynomials $p_+=p(x,y+x^3+x^{-2})$ and $p_-=p(x,y-x^3+x^{-2})$ is at most $1,$ and if we show that degree $\leq 0$ is only possible for constant polynomials $p$.


First consider $q=y^2-x^6.$ We find $$q_\pm=q(x,y\pm x^3+x^{-2})= \pm 2x \pm 2x^3y+x^{-4}+2x^{-2}y+y^2.$$ Using this, we next consider $r=x^ky^l(y^2-x^6)^m$ and the Laurent polynomials $r_\pm=r(x,y\pm x^3+x^{-2})$. We find \begin{align*} r_\pm = \sum_{a+b+c=l,d+e+f+g+h=m} \frac{l!m!}{a!b!c!d!e!f!g!h!}(\pm1)^{b+d+e}2^{d+e+g}x^{k+3b-2c+d+3e-4f-2g}y^{a+e+g+2h}\end{align*}
From this formula we can read off the following facts:
\begin{itemize}
\item The coefficients of $r_+$ and $r_-$ are the same up to signs. In fact, whether $b+d+e$ is even or odd only depends on the monomial $x^{k+3b-2c+d+3e-4f-2g}y^{a+e+g+2h}$ (in fact only on $k+3b-2c+d+3e-4f-2g$).

\item The Newton polytope of $r_\pm$ has vertices $$(-4m-2l+k,0),(k,2m+l),(m+3l+k,0),(3m+3l+k,m).$$ There are monomials on the line from $(m+3l+k,0)$ to $(3m+3l+k,m)$, and on parallel lines shifted by $5$ to the left. No other monomials occur. This can be seen by checking that the $(1,-2)$-degree of the monomial $x^{k+3b-2c+d+3e-4f-2g}y^{a+e+g+2h}$  is $$k-2l+m +5(b-f-g-h).$$

\item The signs of the coefficients of $r_-$ and $r_+$ obey the following rule. On the line through $(m+3l+k,0)$ and $(3m+3l+k,m)$ the signs differ by $(-1)^{m+l}$. Going through parallel lines in steps of $5$ to the left, the sign change  oscillates from $+$ to $-$. 
\end{itemize}
We are now ready to construct the desired polynomials. We start with $$p^{(1)}=(y^2-x^6)^m$$ for an arbitrarily large $m$. Many of the monomials of $p^{(1)}_+$ and $p^{(1)}_-$ have $(1,-3)$-degree $\leq 1$ anyway. However, there are some which don't. If in the Newton polytope we follow the line from $(3m,m)$ in direction towards $(m,0$), the first monomial $x^{3m}y^m$ is of degree $0$, and the second monomial $x^{3m-2}y^{m-1}$ is of degree $1$. We can tolerate both of them. The next one is however $x^{3m-4}y^{m-2}$, and here we have a $(1,-3)$-degree of $2$. We now modify $p^{(1)}$ by adding $$p^{(2)}=c\cdot x^2(y^2-x^6)^{m-2}$$ to $p^{(1)}$, with a suitable coefficient $c$. Since $p^{(2)}_\pm$ gives rise to a Newton polytope with vertices $$(-4m+10,0), (2,2m-4)),(m,0),(3m-4,m-2),$$ we can choose $c$ to cancel the monomial $x^{3m-4}y^{m-2}$ in {\it both} $p^{(1)}_+$ and $p^{(1)}_-$ {\it at the same time}. This follows from the above  sign considerations: the coefficients in $
 p_+^{(1)}$ and $p_-^{(1)}$ differ by $(-1)^{m}$ at this monomial, and the same is true for $p^{(2)}_+$ and $p^{(2)}_-$. 

Now the new Laurent polynomials $(p^{(1)}+p^{(2)})_\pm$ both have one less of the bad monomials, namely  $x^{3m-4}y^{m-2}$. At the same time, no new monomials arise. The coefficients are still the same in absolute value, and whether the sign changes is determined by the same rule as described above.

 In this way one proceeds: Assume that all monomials up to $(3m-2(i-1),m-(i-1))$ have already been cancelled. Write $i=3l+k$ with $0\leq k\leq 4$ and $i-l$ even. Then a term $$p^{(i)}=c\cdot x^ky^l(y^2-x^6)^{m-i}$$ will allow to also cancel the monomial $(3m-2i,m-i)$ in both Laurent polynomials $$(p^{(1)}+\cdots + p^{i-1)})_\pm$$ at the same time. This follows from the fact that $(-1)^{m-i+l}=(-1)^m$ since $i-l$ is even. 

Once all bad monomials on this line are cancelled, one resumes with bad monomials on parallel lines to the left in a similar fashion. On the next line to the left, one for example has to write $i-5=3l+k$, this time $i-l$ odd, and so on...

When trying to figure out why the degree $1$ monomials, for example  $x^{3m-2}y^{m-1}$, cannot be cancelled in this way, one  sees why only constant polynomials $p$ give rise to $p_\pm$ both of degree $\leq 0.$ One checks that the highest degree part in the $(1,3)$-grading of some $p\in\R[x,y]$ with $\deg_{(1,-3)}p_\pm \leq 0$ must be $(y^2-x^6)^m$ for some $m$, up to scaling. In fact the linear equations for monomials on the maximal $(1,3)$-line in the Newton polytope of $p_\pm$ are linearly independent, having Pascal matrices as coefficient matrices.  Since there are $2m+1$ coefficients and $2m$ equations, there is a unique solution up to scaling.

Now the monomial $x^{3m-2}y^{m-1}$ in $(y^2-x^6)^m_\pm$ has sign $(\pm 1)^m$, as we have seen. It can only cancel with terms coming from  $x(y^2-k^6)^{m-1},$ but here the sign will be $(\pm1)^{m-1}$. So the cancellation cannot work for both substitutions at the same time. This finishes the proof.
\end{proof}

\begin{rem}
Proposition \ref{almost-single} below gives an `explanation' of the above counter example in the language of Section \ref{plane}.
\end{rem}

As explained in the introduction, none of the existing methods to decide the moment problem seems to work for  sets of this kind. The reduction result from \cite{sch2} cannot be applied since $\B_0(S)=\R$, and since Question \ref{five} has a negative answer, the usual way to see that the moment problem is unsolvable is also not successfull. Sets of this kind seem to call for completely new methods.

\section{Sets in the plane}\label{plane}

In this section we use more elaborate techniques, mostly from \cite{contractibility, non-negative-valuation, sub1}, to examine planar sets in more detail. We will obtain many more examples and counterexamples to our question (see Example \ref{exex}).

\subsection{Degree like functions associated to a subset of $\rr^n$}
Let $S$ be a subset of $\rr^n$ and $\delta_S: \rr[x_1, \ldots, x_n]\setminus\{0\} \to \zz$ be the function that maps $f$ to the smallest $d$ such that $f \in \scrB_d(S)$. Then $\delta_S$ is a {\em degree-like function} in the terminology of \cite{sub1} (or equivalently, $-\delta_S$ is an {\em order function} in the terminology of \cite{szpiro}), i.e.\ $\delta_S$ satisfies 
\begin{enumerate}
\let\oldenumi\theenumi
\renewcommand{\theenumi}{P\oldenumi}
\item \label{ative} $\delta_S(f+g) \leq \max\{\delta_S(f),\delta_S(g)\}$ with equality if $\delta_S(f) \neq \delta_S(g)$, and
\item \label{mtive} $\delta_S(fg) \leq \delta_S(f) + \delta_S(g)$.
\end{enumerate}
Some trivial observations are 
$\scrB(S) = \dsum_{d \geq 0} \{f: \delta_S(f) \leq d\}$ and $\delta_S \leq \deg$.
Now define $\bar \delta_S: \rr[x_1, \ldots, x_n]\setminus\{0\} \to \rr$ as 
\begin{align*}
\bar \delta_S(f) := \lim_{n \to \infty} \delta_S(f^n)/n.
\end{align*}
It is not hard to see that $\bar \delta_S$ is well defined and satisfies $\bar \delta_S(f^k) = k\bar\delta_S(f)$ for all $f$ and $k$. We call $\bar\delta_S$ the {\it normalization} of $\delta_S$. In the terminology of \cite{szpiro}, $-\bar \delta_S$ is a {\em homogeneous order function}. We will examine the structure of $\bar \delta_S$ in more details for the case $n=2$.

\begin{defn}
Let $S$ be a semi-algebraic subset of $\rr^2$. For each $r > 0$, let $B_r$ be the ball of radius $r$ centered at the origin. For large enough $r$, the number of connected components of $S \setminus B_r$ becomes stable. Each of these components is called a {\em tentacle} of $S$. 
\end{defn}

\subsection{The case of a single tentacle} \label{single-subsection}
Throughout this subsection we assume that $S$ is a semi-algebraic subset of $\rr^2$ such that
\begin{enumerate}
\let\oldenumi\theenumi
\renewcommand{\theenumi}{A\oldenumi}
\item \label{single-assumption} $S$ has only one tentacle, and
\item \label{regular-assumption} the tentacle of $S$ is regular, i.e.\ it contains a dense open subset.
\end{enumerate}
Let $\bar S$ be the closure of $S$ in $\rr\pp^2$ and $L_\infty$ be the line at infinity on $\rr\pp^2$.

\begin{lemma} \label{single-degree-lemma}
If $\bar S$ intersects $L_\infty$ at more than one point, then $\delta_S = \bar \delta_{S} = \deg$.
\end{lemma}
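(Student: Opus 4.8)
The plan is to reduce the statement to a growth estimate along two different directions at infinity and then invoke Proposition~\ref{tendeg}, or rather its elementary underlying idea, in a one-tentacle setting. First I would fix the setup: since $\delta_S \leq \deg$ always holds, and $\bar\delta_S \leq \delta_S$ by submultiplicativity, it suffices to prove the reverse inequality $\deg(f) \leq \bar\delta_S(f)$ for every nonzero $f \in \rr[x_1,x_2]$; combined with $\bar\delta_S(f) \leq \delta_S(f) \leq \deg(f)$ this forces all three to coincide. So the whole content is: if $\bar S$ meets $L_\infty$ in at least two points, then $f^k$ cannot lie in $\scrB_{kd}(S)$ when $d < \deg(f)$, for infinitely many $k$.

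The key geometric input is that $\bar S \cap L_\infty$ contains two distinct points $P_1 \neq P_2$. Each $P_i$ corresponds to a direction (a point of $\pp^1$), i.e.\ a line $\ell_i$ through the origin in $\rr^2$ (or more precisely a ray); because $S$ is semialgebraic and $\bar S$ accumulates at $P_i$, the set $S$ contains, near infinity, points approaching the direction $P_i$. The next step is to argue that for a generic choice of these two directions the top-degree form $f_{\deg}$ of $f$ does not vanish at \emph{both} $P_1$ and $P_2$; in fact a nonzero binary form of degree $\deg(f)$ vanishes at only finitely many points of $\pp^1$, so we can pick $P \in \{P_1,P_2\}$ with $f_{\deg}(P) \neq 0$. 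Now I would produce an actual curve inside $S$ heading to infinity in the direction $P$: using that the tentacle is regular (assumption \eqref{regular-assumption}) and semialgebraic, there is a semialgebraic arc $\gamma(t) = (t\,\xi_1 + o(t), t\,\xi_2 + o(t))$ as $t \to \infty$ lying in $S$, where $[\xi_1:\xi_2]=P$ — or, being slightly more careful, a curve along which $\|\gamma(t)\|\to\infty$ and $\gamma(t)/\|\gamma(t)\| \to$ the affine representative of $P$. Along such a curve $f(\gamma(t))$ grows like $f_{\deg}(\xi) t^{\deg(f)} + \lot$, hence like $t^{\deg(f)}$ exactly, since $f_{\deg}(\xi)\neq 0$.

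Finally I would compare with what membership in $\scrB_d(S)$ permits. If $f \in \scrB_d(S)$, then $f^2 \leq q$ on $S$ with $\deg q \leq 2d$, so along $\gamma$ we get $|f(\gamma(t))|^2 \leq q(\gamma(t)) = O(t^{2d})$, giving $\deg(f) \leq d$. Thus $\delta_S(f) = \deg(f)$ for all $f$, which is even stronger than needed and immediately yields $\bar\delta_S = \deg$ as well (or one runs the same argument with $f^k$ in place of $f$ to get $\bar\delta_S$ directly). The main obstacle I anticipate is the real-algebraic geometry step: ensuring that whenever $\bar S$ (the \emph{real} projective closure) meets $L_\infty$ at a point $P$, the real semialgebraic set $S$ genuinely contains points tending to infinity in the direction $P$ along which a curve can be parametrized — one must rule out the pathology that $\bar S$ touches $L_\infty$ only through a lower-dimensional sliver, which is where regularity of the tentacle is essential, and one should probably invoke the curve selection lemma for semialgebraic sets to extract $\gamma$. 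Handling the two points simultaneously is then trivial, since we only need \emph{one} of them to be a non-zero of $f_{\deg}$, and a degree-$\deg(f)$ form has at most $\deg(f)$ projective zeros while we are free to perturb within the (open, by regularity) tentacle to avoid them.
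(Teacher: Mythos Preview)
Your overall strategy---pick a direction at infinity on which the leading form $f_{\deg}$ does not vanish, extract a semialgebraic arc in $S$ heading in that direction, and compare growth rates---is exactly the paper's, and the endgame (curve selection plus the growth estimate $f(\gamma(t))\sim f_{\deg}(\xi)\,t^{\deg f}$) is correct. But there is a genuine gap in how you secure such a direction. You write that since $f_{\deg}$ has only finitely many zeros on $\pp^1$ ``we can pick $P\in\{P_1,P_2\}$ with $f_{\deg}(P)\neq 0$''; this is a non sequitur, because for a fixed $f$ both $P_1$ and $P_2$ may perfectly well lie among those finitely many zeros. Your patch (``perturb within the open tentacle'') does not help: regularity only says the tentacle has $2$-dimensional interior in $\rr^2$, not that nearby curves have different asymptotic directions. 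A regular strip, for instance, is open yet hits $L_\infty$ in a single point, and perturbing an arc inside it never changes the limit direction.

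What is missing is the one-line observation the paper actually uses: because $S$ has a \emph{single} tentacle, $\bar S\cap L_\infty$ is \emph{connected} (it is the nested intersection $\bigcap_r \overline{S\setminus B_r}$ of connected compacta in $\rr\pp^2$), and a connected subset of $L_\infty\cong\pp^1(\rr)$ with more than one point is automatically infinite. With infinitely many limit directions available, your finitely-many-zeros argument goes through cleanly: for each $f$ you can choose $P\in\bar S\cap L_\infty$ avoiding the zero set of $f_{\deg}$, then invoke curve selection. Add this connectedness step and your proof is complete and essentially identical to the paper's.
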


\begin{proof}
Since $S$ has only one tentacle it follows that $\bar S \cap L_\infty$ is connected. It follows that $|\bar S \cap L_\infty| = \infty$. Choose coordinates $[X:Y:Z]$ on $\rr\pp^2$ such that $(x,y) := (X/Z,Y/Z)$ are coordinates on $\rr^2$. Choose an infinite sequence of points $P_i := [1:c_i:0] \in \bar S \cap L_\infty$ and curves $C_i \subseteq S$ such that $P_i \in \bar C_i$. Then $(1/x,y/x)$ are coordinates near each $P_i$ and $\bar C_i$ has a Puiseux expansion at $P_i$ of the form $y/x = c_i + \sum_{q \in \Q, q>0} c_{i,q}(1/x)^q$, or equivalently, of the form $y = c_i x + \sum_{q \in \Q, q<1} c_{i,q}x^q$. Now pick two polynomials $g_1, g_2 \in \rr[x,y]$ with $d_1 := \deg(g_1) < d_2 := \deg(g_2)$. Then 
\begin{align*}
g_i|_{C_j} = g_i(x,y)|_{y = c_j x + \sum_{q \in \Q, q<1} c_{j,q}x^q} = g_{i,d_i}(1,c_j)x^{d_i} + \lot,
\end{align*}
where $g_{i,d_i}$ is the leading form of $g_i$ and $\lot$ stands for `lower order terms' (in $x$). If $g_{2,d_2}$ is non-negative on $\rr^2$, then it follows that for generic $C_j$, $g_2|_{C_j} > g_1|_{C_j}$ for sufficiently large $|x|$. It therefore follows that $\delta_S = \deg$, as required.
\end{proof}

Now assume \ref{single-assumption} and \ref{regular-assumption} hold and that $\bar S$ intersects $L_\infty$ at only one point $P$. Choose a linear function $u$ such that $P$ is not on the closure of the line $u = 0$. Then (without changing $\delta_S$), we may assume that for sufficiently large values of $|u|$, all points of $S$ are bounded by curves $C_i := \{f_i(x,y) = 0\}$, $1 \leq i \leq 2$. Choose another linear function $v$ such that $(u,v)$ is linearly independent. Then $(1/u,v/u)$ is a set of coordinates (on $\rr\pp^2$) near $P$ and (the closure of) each $C_i$ has a Puiseux expansion at $P$ of the form $v/u = \sum_{j\geq 0} a_{ij} (1/u)^{\tilde \omega_{ij}}$, with $0 \leq \tilde \omega_{i0} < \tilde \omega_{i1} < \cdots$ and $a_{ij} \in \rr$, or $v = \phi_i(u)$, where $\phi_i(u) := \sum_{j\geq 0} a_{ij} u^{\omega_{ij}}$ where $\omega_{ij} := 1 - \tilde \omega_{ij}$.

\begin{lemma} \label{single-tentacle-lemma}
Let $\omega$ be the largest (rational) number such that the coefficients of $u^{\omega'}$ in the expansion of $C_i$'s are equal for all $\omega' > \omega$. Let $\phi(u)$ be the (common) part of $\phi_i$'s consisting of all terms with the exponent of $u$ greater than $\omega$. Let $\xi$ be a new indeterminate and define $\delta^*_S : \rr[x,y]\setminus\{0\} \to \Q$ as
\begin{align}
\delta^*_S(f) := \deg_u(f(u,\phi(u) + \xi u^\omega)). \label{delta^*_S}
\end{align} 
Then
\begin{enumerate}
\item \label{single-1} $\bar \delta_S = \max\{0,\delta^*_S\}$.
\item \label{single-2} $\delta_S = \lceil \bar\delta_S \rceil$.
\item \label{single-3} For each $f \in \rr[x,y]\setminus \{0\}$, $f/|u|^{\delta^*_S(f)}$ is bounded outside a compact set on $S$. 
\end{enumerate}
\end{lemma}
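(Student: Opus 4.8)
The plan is to work on the single tentacle of $S$, which by assumption is bounded by the two curves $C_1, C_2$ whose common Puiseux tail at $P$ is the polynomial $\phi(u)$; the two branches separate only at order $u^\omega$. The key geometric picture is that a generic curve $\lambda \mapsto (u(\lambda), \phi(u(\lambda)) + c\, u(\lambda)^\omega)$, as $c$ ranges over an interval between the two values $a_{1j_0}, a_{2j_0}$ of the first differing coefficients, lies inside the tentacle for large $|u|$; more precisely, for every value of $\xi$ in an open interval $I$ there is a curve in $S$ asymptotic to $v = \phi(u) + \xi u^\omega + \mathrm{l.o.t.}$, and conversely every point of $S$ far out lies near such a curve. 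So first I would establish this ``foliation'' statement carefully: the tentacle, outside a compact set, is (up to sets that do not affect $\delta_S$) the union of these curves as $\xi$ runs over $I$.

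Granting that, restrict a polynomial $f$ to such a curve: $f(u, \phi(u) + \xi u^\omega) = g(\xi) u^{\delta^*_S(f)} + \mathrm{l.o.t.}$ in $u$, where $g(\xi)$ is a nonzero polynomial in $\xi$ (nonzero by the very definition of $\delta^*_S$ as the $u$-degree, and because $\xi$ is a free indeterminate in $\rr[x,y][\xi]$). For \ref{single-3}: since $g$ has only finitely many zeros, on the part of $S$ sitting over any \emph{closed} subinterval of $I$ avoiding the (finitely many real) roots of $g$ we get $|f| \leq C|u|^{\delta^*_S(f)}$; the finitely many remaining curves (roots of $g$ inside $I$, plus the two boundary curves) are lower-dimensional, hence can be absorbed into a compact modification or bounded by a lower power of $|u|$ times a global nonnegative polynomial, using the same ``some curve realizes the generic degree'' argument as in Proposition \ref{tendeg}. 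This gives $\delta^*_S(f) \geq \deg_z$-type control, and in fact shows $f/|u|^{\delta^*_S(f)}$ is bounded outside a compact set, which is \ref{single-3}.

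For \ref{single-1}: \ref{single-3} shows $f^2 \leq C + D\,u^{2\lceil\delta^*_S(f)\rceil}$-ish on $S$ once $\delta^*_S(f) > 0$, hence $\delta_S(f) \leq \lceil \max\{0,\delta^*_S(f)\}\rceil$; passing to powers $f^n$ and using $\delta^*_S(f^n) = n\delta^*_S(f)$ (it is a degree-like evaluation on a single curve, so additive on products and in fact multiplicative on powers) gives $\bar\delta_S(f) \leq \max\{0,\delta^*_S(f)\}$. Conversely, if $\delta^*_S(f) > 0$ then $f$ is genuinely unbounded of growth exactly $|u|^{\delta^*_S(f)}$ along a generic curve (leading coefficient $g(\xi) \neq 0$ there), so it cannot be dominated by $q \in \rr[x]_{2d}$ with $d < \delta^*_S(f)$, because such $q$ grows at most like $|u|^{2d\varphi}$ (here $\varphi$ is the $u$-degree of $x_i$'s, which is $1$ after the coordinate normalization making $u$ a coordinate); combined with $f^n$ this yields $\bar\delta_S(f) \geq \delta^*_S(f)$. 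Clearly $\bar\delta_S \geq 0$ always, so $\bar\delta_S(f) = \max\{0,\delta^*_S(f)\}$. Finally \ref{single-2}, $\delta_S = \lceil \bar\delta_S\rceil$: the inequality $\delta_S(f) \geq \bar\delta_S(f)$, hence $\geq \lceil\bar\delta_S(f)\rceil$ since $\delta_S$ is integer-valued, is immediate from $\delta_S \geq \bar\delta_S$ (which follows from $\delta_S(f^n)\geq \delta_S(f)$... actually from $\bar\delta_S(f) = \lim \delta_S(f^n)/n \leq \delta_S(f)$ applied to show $\bar\delta_S \le \delta_S$, giving one direction — and integrality upgrades it to $\lceil\bar\delta_S\rceil \le \delta_S$); the reverse inequality $\delta_S(f) \leq \lceil\bar\delta_S(f)\rceil$ is exactly the bound obtained in the proof of \ref{single-1}.

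The main obstacle I expect is the ``foliation'' step: showing that outside a compact set the regular tentacle genuinely is, up to negligible (lower-dimensional or compact) pieces, the union of the curves $v = \phi(u) + \xi u^\omega + \mathrm{l.o.t.}$ for $\xi$ in an interval, and that one may choose the lower-order tail freely (or that it does not matter). This uses regularity (assumption \ref{regular-assumption}) to rule out the tentacle being squeezed onto a single curve, uses that $S$ has a single tentacle (assumption \ref{single-assumption}) so that $\bar S \cap L_\infty$ is a single point $P$ and both bounding branches pass through $P$, and requires a Puiseux/Newton-polygon analysis near $P$ on the real points. Everything downstream — the degree computation on a curve, the root-counting for $g(\xi)$, the ceiling bookkeeping — is routine once this is in place.
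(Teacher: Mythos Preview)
Your approach is essentially the same as the paper's. The paper resolves your flagged ``foliation'' obstacle in one line by taking the explicit convex combinations $\phi_t(u) := t\phi_1(u) + (1-t)\phi_2(u)$ for $t \in [0,1]$; these curves evidently lie in the tentacle for large $|u|$ and cover it, since any point of $S$ between $C_1$ and $C_2$ lies on some $C_t$. One simplification you should make: your worry about the real zeros of $g(\xi)$ for assertion \ref{single-3} is unnecessary. Even on a curve $C_t$ where the leading coefficient $g(b_0(t))$ vanishes, the expansion $f|_{C_t} = g(b_0(t))u^d + \lot$ still shows $|f|/|u|^d$ is bounded (it tends to $0$ rather than to a nonzero constant), so no ``lower-dimensional absorption'' argument is needed for the upper bound; uniformity over $t \in [0,1]$ is what you actually need, and that comes from compactness. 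The zeros of $g$ matter only for the \emph{lower} bound, where you correctly need at least one curve on which $g$ does not vanish.
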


\begin{proof}
At first we claim that $\delta_S(u) = \bar \delta_S(u) = 1$. Indeed, since $\deg(u) = 1$, if the claim does not hold, then $\bar\delta_S(u) < 1$ and therefore there is a positive integer $d$ and a polynomial $h \in \rr[u,v] = \rr[x,y]$ with $e := \deg(h) < 2d$ such that $u^{2d} < h$ on $S$. But it is impossible, since 
\begin{align*}
h|_{C_i} = h(u,v)|_{v = \phi_i(u)} = cu^e + \lot
\end{align*}
for some $c\in r$, and therefore $h|_{C_i} < u^{2d}|_{C_i}$ for large enough $|u|$. This proves the claim.\\

For each $t \in [0,1]$, let $\phi_t(u) := t\phi_1(u) + (1-t)\phi_2(u)$. Then for sufficiently large $|u|$, for each $t \in [0,1]$, $ v = \phi_t(u)$ defines a branch of real analytic curve $C_t$ in $S$ such that $\lim_{|u| \to \infty} C_t = P$. Now note that 
\begin{align*}
\phi_t(u) = \phi(u) + (ta_1 + (1-t)a_2)u^\omega + \lot  = \phi(u) + u^\omega\psi_t(u),
\end{align*}
where $a_i$ is the coefficient of $u^\omega$ in $\phi_i$, and $\psi_t(u)$ is (a Puiseux series in $1/u$) of the form $\sum_{\omega' \leq 0} b_{\omega'}(t)u^{\omega'}$ with $b_0(t) = ta_1 + (1-t)a_2$. Let $f \in \rr[u,v]$ and $d := \deg_u(f(u,\phi(u) + \xi u^\omega))$. Then 
\begin{align*}
f(u,\phi(u) + \xi u^\omega) = f_0(\xi)u^d + \lot
\end{align*}
where $f_0$ is a non-zero polynomial in $\xi$. It follows that 
\begin{align*}
f|_{C_t} = f(u,\phi(u) + \xi u^\omega)|_{\xi = \psi_t(u)} = f_0(ta_1 + (1-t)a_2)u^d  + \lot
\end{align*}
We see that $f/|u|^d$ is bounded outside a compact set on $S$ and consequently $\bar \delta_S(f) \leq \max\{0,d\}$ and $\delta_S(f) \leq \max\{0,\lceil d \rceil\}$.\\

On the other hand, if $d > 0$ and $h$ is a polynomial in $\rr[u,v]$ with $\deg(h) < dk$ for some integer $k \geq 1$, then clearly $e := \deg_u(h(u, \phi(u) + \xi u^\omega)) \leq \deg(h) < dk$. Since 
\begin{align*}
h|_{C_t} = h(u,\phi(u) + \xi u^\omega)|_{\xi = \psi_t(u)} = h_0(ta_1 + (1-t)a_2)u^e  + \lot
\end{align*}
for some polynomial $h_0 \in \rr[\xi]$, it follows that $f^{2k}$ eventually becomes bigger on each $C_t$ than $h_0^2$. It follows that $\bar \delta_S(f) \geq d$, and consequently, $\bar \delta_S(f) = d$, as required to prove the first assertion of the lemma. This same argument with $k = 1$ in fact also proves the second assertion. The last assertion follows from the last sentence of the preceding paragraph. 
\end{proof}

\begin{defn}\label{deg-wise-puiseux}
$\phi(u) + \xi u^\omega$ from identity \eqref{delta^*_S} is called the {\em generic degree-wise Puiseux series} corresponding to $S$.
\end{defn}

\begin{rem} \label{semi-remark}
Note that $\delta^*_S$ is a {\em semidegree} (in the terminology of \cite{sub1}), i.e.\ $\delta^*_S$ satisfies Property \ref{mtive} of degree-like functions with an equality.
\end{rem}

\begin{cor}
Assume \ref{single-assumption} and \ref{regular-assumption} hold. Then $\bar \delta_S$ is a {\em semidegree} iff $\bar \delta_S = \bar \delta^*_S$ iff $\bar \delta^*_S$ is non-negative on $\rr[x,y]$.
\end{cor}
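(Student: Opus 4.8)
The plan is to reduce everything to the identity $\bar\delta_S=\max\{0,\delta^*_S\}$ supplied by Lemma~\ref{single-tentacle-lemma}\eqref{single-1}, combined with Remark~\ref{semi-remark}, which tells us that $\delta^*_S$ is itself a semidegree. (Recall that $\delta^*_S$ is defined in the case where $\bar S\cap L_\infty$ is a single point; if instead $\bar S$ meets $L_\infty$ in more than one point, Lemma~\ref{single-degree-lemma} already gives $\delta_S=\bar\delta_S=\deg$, which is a semidegree, so there is nothing of substance to prove and we restrict to the single-point case.) First I would record that, since $\delta^*_S$ is a semidegree, $\delta^*_S(f^n)=n\,\delta^*_S(f)$ for all $f$ and $n$, whence the normalization of $\delta^*_S$ is $\delta^*_S$ itself, i.e.\ $\bar\delta^*_S=\delta^*_S$. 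Consequently the middle condition $\bar\delta_S=\bar\delta^*_S$ unwinds, via Lemma~\ref{single-tentacle-lemma}\eqref{single-1}, to ``$\max\{0,\delta^*_S(f)\}=\delta^*_S(f)$ for every $f\in\rr[x,y]\setminus\{0\}$'', which is word for word the third condition that $\bar\delta^*_S=\delta^*_S$ be non-negative on $\rr[x,y]$. So the second and third conditions are trivially equivalent, and whenever they hold one has $\bar\delta_S=\delta^*_S$, hence $\bar\delta_S$ is a semidegree.

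It remains to prove that the first condition implies the third, and here I would argue by contraposition. Suppose $\delta^*_S(f_0)=-a<0$ for some $f_0\in\rr[x,y]\setminus\{0\}$, so that $a\in\Q_{>0}$. I would first note that $\delta^*_S(u)=1$, since in \eqref{delta^*_S} the polynomial $u$ is substituted to $u$ itself, of $u$-degree $1$; because $\delta^*_S$ is a semidegree, $\delta^*_S(u^k)=k$ for every $k$. Now pick an integer $k>a$ and set $g:=u^k$. Then $\delta^*_S(f_0g)=\delta^*_S(f_0)+\delta^*_S(g)=k-a>0$, so by Lemma~\ref{single-tentacle-lemma}\eqref{single-1},
\begin{align*}
\bar\delta_S(f_0g)=\max\{0,k-a\}=k-a<k=\max\{0,-a\}+\max\{0,k\}=\bar\delta_S(f_0)+\bar\delta_S(g).
\end{align*}
Thus $\bar\delta_S$ fails Property~\ref{mtive} with equality, so it is not a semidegree. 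Combined with the previous paragraph, this closes the cycle of equivalences.

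I do not anticipate a genuine obstacle. The only idea is that the truncation $\max\{0,\cdot\}$ destroys additivity precisely when $\delta^*_S$ takes a negative value, and the one thing one must arrange is a polynomial (namely $u^k$ with $k$ large) whose $\delta^*_S$-value is a sufficiently large positive number, so that the truncation is actually felt on the product $f_0u^k$. The only point requiring a line of care is the identity $\bar\delta^*_S=\delta^*_S$, which makes the second and third conditions literally the same statement; this is immediate from Remark~\ref{semi-remark}.
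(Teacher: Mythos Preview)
Your proof is correct and is precisely the unwinding the paper has in mind: the corollary is stated without proof, intended as an immediate consequence of Lemma~\ref{single-tentacle-lemma}\eqref{single-1} and Remark~\ref{semi-remark}, and your argument spells this out exactly. The observation $\bar\delta^*_S=\delta^*_S$ (since a semidegree is its own normalization) disposes of the notational wrinkle in the statement, and your contrapositive for ``semidegree $\Rightarrow$ non-negative'' via $f_0u^k$ is the natural way to exhibit the failure of multiplicativity in $\max\{0,\delta^*_S\}$.
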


\subsection{The general regular case} \label{general-subsection}
 
\begin{prop} \label{general-2-prop}
Let $S$ be a semialgebraic subset of $\rr^2$ such that every tentacle of $S$ satisfies property \ref{regular-assumption}. Then 
\begin{align}
\bar \delta_S   = \max\{\bar \delta_T: T\ \text{is a tentacle of}\ S\} 
                = \max\left( \{0\} \cup \{\delta^*_T: T\ \text{is a tentacle of}\ S\} \right).
\end{align} \label{bar-delta-presentation}
\end{prop}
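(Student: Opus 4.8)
The plan is to reduce the multi-tentacle case to the single-tentacle results already established, using the decomposition property $\scrB_d(S_1 \cup S_2) = \scrB_d(S_1) \cap \scrB_d(S_2)$ from the introduction, which immediately dualizes to the degree-like functions as $\delta_S = \max\{\delta_T : T \text{ a tentacle of } S\}$. Indeed, write $S = T_1 \cup \cdots \cup T_k$ where the $T_j$ are the tentacles; strictly speaking $S$ may also contain a bounded remainder, but since $\scrB_d(S)$ depends only on the behavior of $S$ at infinity, we may discard it (or absorb it harmlessly into one of the tentacles). Then $f \in \scrB_d(S)$ iff $f \in \scrB_d(T_j)$ for every $j$, i.e.\ $\delta_S(f) = \max_j \delta_{T_j}(f)$. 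Each $T_j$, being a single tentacle, satisfies assumption \ref{single-assumption}, and by hypothesis it satisfies \ref{regular-assumption}, so Lemma \ref{single-tentacle-lemma} (together with Lemma \ref{single-degree-lemma} in the case that $\bar T_j$ meets $L_\infty$ in more than one point, where one sets $\delta^*_{T_j} := \deg$) applies to it.

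The next step is to pass from $\delta_S$ to its normalization $\bar\delta_S$. Here one uses that normalization commutes with finite maxima of degree-like functions: for any $f$,
\begin{align*}
\bar\delta_S(f) = \lim_{n\to\infty} \frac{\delta_S(f^n)}{n} = \lim_{n\to\infty} \frac{\max_j \delta_{T_j}(f^n)}{n} = \max_j \lim_{n\to\infty} \frac{\delta_{T_j}(f^n)}{n} = \max_j \bar\delta_{T_j}(f),
\end{align*}
where the interchange of $\max$ over the finite index set with the limit is elementary (each individual limit exists by the remark that $\bar\delta_T$ is well defined, and the max of finitely many convergent sequences converges to the max of the limits). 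This gives the first equality $\bar\delta_S = \max\{\bar\delta_T : T \text{ a tentacle of } S\}$.

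For the second equality I would simply substitute the single-tentacle formula. By part \eqref{single-1} of Lemma \ref{single-tentacle-lemma}, for each tentacle $T$ meeting $L_\infty$ at a single point we have $\bar\delta_T = \max\{0, \delta^*_T\}$; for a tentacle meeting $L_\infty$ at more than one point, Lemma \ref{single-degree-lemma} gives $\bar\delta_T = \deg$, and adopting the convention $\delta^*_T := \deg$ (which is $\geq 0$ and a semidegree, consistent with Remark \ref{semi-remark}) we again have $\bar\delta_T = \max\{0,\delta^*_T\}$. Hence
\begin{align*}
\bar\delta_S = \max_T \max\{0, \delta^*_T\} = \max\left(\{0\} \cup \{\delta^*_T : T \text{ a tentacle of } S\}\right),
\end{align*}
as claimed.

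I do not expect a serious obstacle here; the proposition is essentially a bookkeeping consequence of the single-tentacle analysis plus the union formula for $\scrB_d$. The one point requiring a little care is the treatment of a possible bounded part of $S$ and, more substantively, making sure the hypothesis ``every tentacle satisfies \ref{regular-assumption}'' is exactly what is needed to invoke Lemma \ref{single-tentacle-lemma} on each $T_j$ — in particular that a single tentacle, regarded as a semialgebraic set in its own right, automatically satisfies \ref{single-assumption}, which is immediate from the definition of tentacle (removing a large ball from a single tentacle leaves one unbounded component). The other mild subtlety is uniformly handling the dichotomy from Lemmas \ref{single-degree-lemma} and \ref{single-tentacle-lemma}, which the convention $\delta^*_T := \deg$ in the multi-point case resolves cleanly.
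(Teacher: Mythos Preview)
Your proof is correct and follows essentially the same route as the paper: reduce the statement to the single-tentacle results of Lemmas \ref{single-degree-lemma} and \ref{single-tentacle-lemma} via the decomposition of $S$ into tentacles. The only minor difference is in how the inequality $\bar\delta_S \leq \max_T \bar\delta_T$ is obtained: you use the identity $\scrB_d(S_1\cup S_2)=\scrB_d(S_1)\cap\scrB_d(S_2)$ from the introduction to get $\delta_S=\max_T\delta_T$ exactly and then normalize, whereas the paper argues the $\leq$ direction directly from assertion \eqref{single-3} of Lemma \ref{single-tentacle-lemma} (boundedness of $f/|u|^{\delta^*_T(f)}$ on each tentacle). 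Both arguments are short and rely on the same ingredients; your version is slightly more explicit, and your treatment of the multi-point-at-infinity case via the convention $\delta^*_T:=\deg$ cleanly fills a point the paper leaves implicit.
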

\begin{proof}
It is clear that $\bar \delta_T \leq \bar \delta_S$ for every tentacle $T$ of $S$, which proves that LHS $\geq$ RHS in \eqref{bar-delta-presentation}. The $\leq$ direction follows from the last assertion of Lemma \ref{single-tentacle-lemma}.
\end{proof}

\begin{cor}
Let $S$ be as in Proposition \ref{general-2-prop}. Then $\bar \delta_S$ is a {\em subdegree} (in the terminology of \cite{sub1}), i.e.\ $\bar \delta_S$ is the maximum of finitely many semidegrees.
\end{cor}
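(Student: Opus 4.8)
The statement to prove is that for $S$ a semialgebraic subset of $\rr^2$ all of whose tentacles are regular, $\bar \delta_S$ is a \emph{subdegree}, i.e.\ the maximum of finitely many semidegrees. The key inputs are already in place: Proposition \ref{general-2-prop} gives the presentation $\bar \delta_S = \max\left(\{0\} \cup \{\delta^*_T : T \text{ a tentacle of } S\}\right)$, and Remark \ref{semi-remark} says each $\delta^*_T$ is a semidegree. So the proof is essentially a matter of assembling these facts, with one small loose end to tie up: the constant-$0$ function appearing in the max must itself be recognized as (or absorbed into) a semidegree, since a priori the zero function satisfies \ref{mtive} with equality only trivially and one should check it counts as a semidegree in the sense of \cite{sub1}, or else exhibit a genuine semidegree below which $0$ is dominated.

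The plan is as follows. First I would invoke Proposition \ref{general-2-prop} to reduce to the finite collection $\{\delta^*_T : T \text{ tentacle of } S\}$ together with the zero function; finiteness is immediate since a semialgebraic set has only finitely many tentacles (the number of connected components of $S \setminus B_r$ stabilizes). Second, I would cite Remark \ref{semi-remark} to note each $\delta^*_T$ is a semidegree. Third, to handle the $\{0\}$ term, I would observe that $\deg$ itself is a semidegree (the standard degree function on $\rr[x,y]$ satisfies \ref{mtive} with equality), and that $0 = \max\{0, \delta\}$ is not literally needed: instead, note that for \emph{any} tentacle $T$ one has $\bar\delta_S \geq \delta^*_T$, and if every $\delta^*_T$ is negative somewhere this is exactly the situation where $\bar\delta_S$ takes the value $0$ on units; the cleanest route is to appeal to the convention in \cite{sub1} under which the zero function (or the degree function) is admitted as a semidegree, so that $\max$ over the finite set $\{0\} \cup \{\delta^*_T\}$ is by definition a subdegree. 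Thus $\bar\delta_S$ is a subdegree.

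The main (and really only) obstacle is the bookkeeping around the constant function $0$: one must make sure that including it in the maximum is legitimate within the \cite{sub1} framework of subdegrees, rather than producing something that is a max of semidegrees plus an extra non-semidegree term. If \cite{sub1} defines a subdegree as a maximum of semidegrees \emph{allowing} the degree function (or equivalently allowing $0$ via $\max\{0,\cdot\}$ to be folded in), this is automatic; otherwise one argues that whenever $\bar\delta_S \neq \max_T \delta^*_T$ already, the value $0$ is attained and $\bar\delta_S = \max(\{0\} \cup \{\delta^*_T\})$ is still a max of semidegrees once one notes $0 = \delta^*_{T_0}$ restricted appropriately, or simply that the trivial semidegree $f \mapsto 0$ qualifies. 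I expect this to be a one- or two-line remark rather than a genuine difficulty, so the corollary follows immediately from Proposition \ref{general-2-prop} and Remark \ref{semi-remark}.
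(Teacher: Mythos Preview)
Your approach is correct and matches the paper's: the corollary is stated there without proof, as an immediate consequence of Proposition~\ref{general-2-prop} and Remark~\ref{semi-remark}, together with the fact that a semialgebraic set has only finitely many tentacles. Your only hesitation---whether the constant-$0$ function counts as a semidegree---is not a real obstacle: the map $f \mapsto 0$ satisfies both \ref{ative} and \ref{mtive} trivially, the latter with equality since $0 = 0 + 0$, so it is a semidegree in the required sense and no further argument or appeal to $\deg$ is needed.
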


\begin{cor}
Let $S$ be as in Proposition \ref{general-2-prop}. Then 
\begin{enumerate}
\item \label{integral-1} There exists a positive integer $N$ such that $\bar \delta_S(f) \in \frac{1}{N}\zz$ for all $f \in \rr[x,y]\setminus\{0\}$.
\item \label{integral-2} Define
\begin{align*}
\bar \scrB_d(S) &:= \{f \in \rr[x,y]: \bar \delta_S(f) \leq d\}, \\
\bar \scrB(S) &:= \dsum_{d \geq 0} \bar \scrB_d(S). 
\end{align*}
Then $\scrB_d(S) = \bar \scrB_d(S)$ for all $d \geq 0$ and $\scrB(S) =\bar \scrB(S)$. 
\end{enumerate}
\end{cor}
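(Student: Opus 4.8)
The plan is to bootstrap everything from the single-tentacle analysis. Write $T_1, \dots, T_k$ for the tentacles of $S$. Since $S$ agrees with $T_1 \cup \dots \cup T_k$ outside a compact set and $\scrB_d$ is unchanged by modifications of $S$ inside a compact set, the identity $\scrB_d(S_1 \cup S_2) = \scrB_d(S_1) \cap \scrB_d(S_2)$ from the introduction yields $\scrB_d(S) = \bigcap_{i=1}^k \scrB_d(T_i)$ for every $d$, which is to say $\delta_S = \max_i \delta_{T_i}$ (both sides being $\zz$-valued, it suffices to compare sublevel sets at every integer $d$). Each $T_i$ is a semialgebraic set with a single tentacle satisfying \ref{regular-assumption}, so Lemma \ref{single-tentacle-lemma}\eqref{single-2} applies when $\bar T_i$ meets $L_\infty$ at one point, and Lemma \ref{single-degree-lemma} covers the remaining case (there $\delta_{T_i} = \bar\delta_{T_i} = \deg$); in all cases $\delta_{T_i} = \lceil \bar\delta_{T_i} \rceil$ on $\rr[x,y] \setminus \{0\}$.

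For \eqref{integral-1}, I would show that each semidegree $\delta^*_{T_i}$ is $\tfrac1{N_i}\zz$-valued for a suitable positive integer $N_i$. After the linear change of coordinates used to define $\delta^*_{T_i}$, we have $\delta^*_{T_i}(f) = \deg_u\!\bigl(f(u, \phi(u) + \xi u^\omega)\bigr)$ for $f \in \rr[u,v] = \rr[x,y]$. By the Newton--Puiseux theorem the exponents appearing in the Puiseux expansions of the (algebraic) curves bounding $T_i$ near $P$ have a common denominator, so $\omega$ and all exponents of $\phi$ lie in $\tfrac1{N_i}\zz$ for some $N_i \in \nn$; substituting $v = \phi(u) + \xi u^\omega$ into a polynomial then produces a Puiseux series in $u$ all of whose exponents lie in $\tfrac1{N_i}\zz$, and hence so does its $u$-degree. (When $\bar T_i$ meets $L_\infty$ at several points, $\bar\delta_{T_i} = \deg$ is already integer-valued.) Taking $N := \operatorname{lcm}_i N_i$ and applying Proposition \ref{general-2-prop}, $\bar\delta_S = \max\bigl(\{0\} \cup \{\delta^*_{T_i}\}\bigr)$ is $\tfrac1N\zz$-valued, as desired.

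For \eqref{integral-2}, fix an integer $d \geq 0$. By Proposition \ref{general-2-prop}, $f \in \bar\scrB_d(S)$ iff $\bar\delta_{T_i}(f) \leq d$ for all $i$; because $d$ is an integer, this is in turn equivalent to $\lceil \bar\delta_{T_i}(f) \rceil \leq d$, i.e.\ to $\delta_{T_i}(f) \leq d$, for all $i$ --- equivalently to $\delta_S(f) = \max_i \delta_{T_i}(f) \leq d$, i.e.\ $f \in \scrB_d(S)$. Hence $\scrB_d(S) = \bar\scrB_d(S)$ for every $d$, and summing over $d$ gives $\scrB(S) = \bar\scrB(S)$.

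The one point that calls for care --- and where I expect the write-up, though not any new idea, to need the most attention --- is the reduction $\delta_S = \max_i \delta_{T_i}$ together with the claim $\delta_{T_i} = \lceil \bar\delta_{T_i}\rceil$: one must check that the sets $T_i$ fed into Lemma \ref{single-tentacle-lemma} genuinely are semialgebraic sets with a single, regular tentacle, and that passing from $S$ to $T_1 \cup \dots \cup T_k$ (and freely modifying each $T_i$ inside a compact set so that the hypotheses of that lemma are met) changes neither $\delta_S$ nor $\bar\delta_S$. Once this bookkeeping is in place, both assertions are formal consequences of the single-tentacle results.
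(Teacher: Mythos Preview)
Your proposal is correct and follows essentially the same route as the paper: both deduce part~\eqref{integral-1} from the formula $\bar\delta_S = \max(\{0\}\cup\{\delta^*_{T_i}\})$ of Proposition~\ref{general-2-prop} together with the fact that each $\delta^*_{T_i}$ takes values in a fixed $\tfrac1{N_i}\zz$ (implicit in the substitution formula \eqref{delta^*_S}), and both obtain part~\eqref{integral-2} by combining Proposition~\ref{general-2-prop} with $\delta_{T_i} = \lceil \bar\delta_{T_i}\rceil$ from Lemma~\ref{single-tentacle-lemma}\eqref{single-2}. Your write-up is simply more explicit --- spelling out the bounded-denominator argument via Newton--Puiseux, the reduction $\delta_S = \max_i \delta_{T_i}$ via $\scrB_d(S_1\cup S_2) = \scrB_d(S_1)\cap\scrB_d(S_2)$, and the edge case from Lemma~\ref{single-degree-lemma} --- whereas the paper condenses all of this into a two-line citation of the relevant lemmas.
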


\begin{proof}
Assertion \ref{integral-1} follows immediately from the Lemma \ref{single-tentacle-lemma}, Proposition \ref{general-2-prop} and the observation that a semialgebraic set has only finitely many tentacles. Assertion \ref{integral-2} follows from Proposition \ref{general-2-prop} and Assertion \ref{single-2} of Lemma \ref{single-tentacle-lemma}. 
\end{proof}

\subsection{Semidegrees on $\cc[x,y]$ and corresponding compactifications of $\cc^2$}
\subsubsection{Background} \label{background-subsection}

Let $\delta$ be a semidegree (see Remark \ref{semi-remark}) on $\cc[x,y]$ defined as:
\begin{align}
\delta(f) := \deg_x(f(x,\phi(x) + \xi x^\omega)). \label{delta-defn}
\end{align} 
where $\xi$ is an indeterminate, $\phi \in \cc[x^{1/N}, x^{-1/N}]$ for some positive integer $N$ and $\omega \in \Q$, $\omega < \ord_x(\phi)$. Associated to $\delta$ there is a finite sequence of elements in $\cc[x,x^{-1},y]$ called the {\em key forms} of $\delta$ (see \cite[Definition 3.17]{contractibility}). The sequence starts with $f_0 := x, f_1 := y$, and continues until there is an element in $\cc[x,x^{-1},y]$ which can be expressed as a polynomial in the computed key forms and whose $\delta$-value is smaller than the `expected' value. An algorithm and detailed example for the computation of key forms of $\delta$ from $\phi$ and $\omega$ appears in \cite[Section 3.3]{contractibility}.

\begin{example} 
\mbox{}\\
\begin{tabular}{cc}
$\phi(x) + \xi x^\omega$ & key forms\\
\hline 
$\xi x^{p/q}$ & $x,y$ \\
$cx^\frac{p}{q} + \xi x^\omega$, $p,q$ rel.\ prime integers, $q > 0$, $\omega < \frac{p}{q}$ & $x,y, y^q - c^qx^p$ \\
$x^{5/2} + x^{-3/2} + \xi x^{-5/2}$ & $x,y,y^2 - x^5, y^2 - x^5 - 2x$\\
$x^{5/2} + x^{-1} + x^{-3/2} + \xi x^{-5/2}$ & $x,y,y^2 - x^5, y^2 - x^5 - 2x^{-1}y, y^2 - x^5 - 2x^{-1}y - 2x$
\end{tabular}
\end{example}

Given a (normal) algebraic variety $Y$ and a codimension one irreducible subvariety $V$ of $Y$, the {\em order of pole} along $V$ defines a semidegree on the field of rational functions on $Y$. Given a semidegree $\delta$ on $\cc[x,y]$, the following proposition gives the construction of a compact algebraic variety containing $\cc^2$ which `realizes' $\delta$ (as the order of pole) along some curve. 

\begin{prop}[{see \cite[Proposition 2.10]{non-negative-valuation}}] \label{compact-prop}
Let $\delta$ be defined as in \eqref{delta-defn}. Assume that $\delta \neq \deg$. Pick the smallest positive integer $N$ such that $N\delta$ is integer-valued. Then there exists a unique compactification $\bar X$ of $X := \cc^2$ such that 
\begin{enumerate}
\item $\bar X$ is projective and normal.
\item $\bar X_\infty := \bar X \setminus X$ has two irreducible components $C_1,C_2$.
\item The semidegree on $\cc[x,y]$ corresponding to $C_1$ and $C_2$ are respectively $\deg$ and $N\delta$. 
\end{enumerate}
Moreover, all singularities of $\bar X$ are {\em rational}.
\end{prop}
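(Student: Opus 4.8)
The statement in question is Proposition~\ref{compact-prop}, which the paper quotes from \cite{non-negative-valuation}; since it is not proved in the excerpt, I will sketch how one establishes it from scratch. The plan is to build $\bar X$ explicitly by a sequence of blow-ups of $\pp^2$ guided by the key forms of $\delta$, and then contract the unwanted exceptional curves. First I would start from the standard compactification $\pp^2 \supseteq \cc^2$, whose line at infinity $L_\infty$ already realizes $\deg$ as its order of pole. The key forms $f_0 = x, f_1 = y, f_2, \ldots, f_\ell$ of $\delta$ encode a chain of infinitely near points: I would blow up the point of $L_\infty$ in the direction determined by $\phi$, then repeatedly blow up the point where the strict transform of the relevant curve $\{f_j = 0\}$ meets the exceptional locus, following the algorithm of \cite[Section~3.3]{contractibility}. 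After finitely many such blow-ups one obtains a smooth projective surface $\tilde X$ whose boundary $\tilde X \setminus \cc^2$ is a tree of rational curves, one of which, call it $E$, carries the semidegree $N\delta$ (this is exactly the curve the key-form construction is designed to produce), and another of which is the strict transform $\tilde L$ of $L_\infty$.

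Next I would contract every boundary curve except $E$ and $\tilde L$. The point is that the boundary is a tree of $\pp^1$'s, and $E$ and $\tilde L$ are the two ``ends'' one wants to keep; the curves strictly between them and hanging off the tree can be blown down in an appropriate order. One must check that each successive contraction is of a curve of negative self-intersection lying in a variety that stays projective, and that the resulting singular points are rational — this is where the hypothesis that we are contracting trees of rational curves enters, via the standard fact (Artin) that contracting a connected configuration of rational curves with negative-definite intersection form on a smooth surface yields a normal surface with a rational singularity. The result is a normal projective surface $\bar X$ with $\bar X_\infty = C_1 \cup C_2$, $C_1 = $ (image of) $\tilde L$ realizing $\deg$, and $C_2 = $ (image of) $E$ realizing $N\delta$; rationality of all singularities follows from the construction.

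For \emph{uniqueness}: suppose $\bar X'$ is another normal projective compactification of $\cc^2$ with boundary $C_1' \cup C_2'$ realizing $\deg$ and $N\delta$. Resolving the singularities of both $\bar X$ and $\bar X'$ and taking a common smooth model dominating both, one compares the two via the fact that a birational map between smooth projective surfaces factors through blow-ups; the condition that the orders of pole along the boundary components agree forces the two minimal resolutions to have the same dual graph (the key forms are recovered from the valuation $N\delta$ together with $\deg$), hence $\bar X \cong \bar X'$ after contracting back. The hypothesis $\delta \neq \deg$ is needed so that $N\delta$ genuinely gives a second boundary component distinct from the one carrying $\deg$.

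The main obstacle, I expect, is the bookkeeping in the contraction step: one must verify that the intermediate surfaces remain projective (so that contractions of negative curves exist in the category of projective varieties, not just algebraic spaces) and keep careful track of self-intersection numbers along the chain of blow-ups so that the configuration being contracted is negative-definite — this is the part where the precise combinatorics of the key-form algorithm of \cite{contractibility} does the real work, and where a fully detailed argument would consume the most space.
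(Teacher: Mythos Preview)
The paper does not prove this proposition at all; it is simply imported from \cite[Proposition~2.10]{non-negative-valuation}, and no argument appears in the present text. Your remark to this effect is correct, so there is no in-paper proof to compare your sketch against.

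Judged on its own, your sketch is the standard geometric construction and is essentially correct. Realizing $N\delta$ as the order of pole along the last exceptional divisor of a sequence of point blow-ups of $\pp^2$ over a single point of $L_\infty$ (here $\delta\neq\deg$ guarantees there is something to blow up), and then contracting all boundary components other than that divisor and the strict transform of $L_\infty$, is exactly how one produces $\bar X$. You have also put your finger on the only real technical burden: checking that the configuration to be contracted has negative-definite intersection matrix so that Artin's criterion applies and yields a normal projective surface with rational singularities. That verification is indeed governed by the combinatorics of the key forms (equivalently the Newton--Puiseux data of $\phi$), and your uniqueness argument via comparison of minimal resolutions is the right one.

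For what it is worth, an alternative route more in the spirit of \cite{sub1} is to set $\bar X := \mathrm{Proj}\bigl(\bigoplus_{d\geq 0}\{f\in\cc[x,y]:\deg f\leq d\ \text{and}\ N\delta(f)\leq d\}\bigr)$ and read normality, the two boundary components, and rationality of singularities off the graded ring directly. This trades your blow-up/contraction bookkeeping for a finite-generation and integral-closedness check; either approach is valid, and it is plausible that the cited reference uses a mixture of the two.
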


\begin{theorem}[Characterizing when $\delta$ is non-negative or positive {\cite[Theorem 1.4]{non-negative-valuation}}] \label{positive-thm}
Let $\delta$ be a semidegree on $\cc[x,y]$ and let $g_0, \ldots, g_{n+1}$ be the key forms of $\delta$ in $(x,y)$-coordinates. Then 
\begin{enumerate}
\item \label{non-positive-assertion} $\delta$ is non-negative on $\cc[x,y]\setminus \cc$ iff $\delta(g_{n+1})$ is non-negative.
\item \label{positive-assertion} $\delta$ is positive on $\cc[x,y]\setminus \cc$ iff one of the following holds:
\begin{enumerate}
\item $\delta(g_{n+1})$ is positive,
\item \label{almost-zero} $\delta(g_{n+1}) = 0$ and $g_k \not\in \cc[x,y]$ for some $k$, $0 \leq k \leq n+1$, or 
\addtocounter{enumii}{-1}
\let\oldenumii\theenumii
\renewcommand{\theenumii}{$\oldenumii'$}
\item \label{almost-zero'} $\delta(g_{n+1}) = 0$ and $g_{n+1} \not\in \cc[x,y]$.
\end{enumerate}
\end{enumerate}
Moreover, conditions \ref{almost-zero} and \ref{almost-zero'} are equivalent.
\end{theorem}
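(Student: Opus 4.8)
The plan is to prove Theorem \ref{positive-thm} by relating the sign behavior of $\delta$ on all of $\cc[x,y]\setminus\cc$ to its value on the single distinguished element $g_{n+1}$, using the compactification $\bar X$ from Proposition \ref{compact-prop} together with the structure theory of key forms. First I would recall the basic structural fact about key forms: every element $f \in \cc[x,y]\setminus\cc$ can be written as a polynomial (or, more precisely, as a finite sum of ``monomials'') in $g_0, \ldots, g_{n+1}$, and there is a combinatorial rule computing $\delta(f)$ from such an expression — namely $\delta$ of a monomial $\prod g_k^{a_k}$ is $\sum a_k \delta(g_k)$ (with the usual care about which exponents are allowed to be negative), and $\delta(f)$ is the max over the monomials appearing, the point being that the ``top'' monomial cannot be cancelled. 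This reduces the question ``is $\delta \geq 0$ (resp.\ $> 0$) on $\cc[x,y]\setminus\cc$?'' to understanding the possible $\delta$-values of such monomials, hence to the values $\delta(g_0), \ldots, \delta(g_{n+1})$ and the exponent constraints.

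Next, for assertion \ref{non-positive-assertion}, the ``only if'' direction is immediate since $g_{n+1} \in \cc[x,y]\setminus\cc$; for the ``if'' direction I would argue that among the key forms, $g_{n+1}$ is the one with smallest $\delta$-value relative to its ``expected'' value, and that $\delta(g_0) = \delta(x)$ and $\delta(g_1) = \delta(y)$ are both positive (since $\omega < \ord_x\phi$ forces $\delta(x) = 1 > 0$ and $\delta(y) = \deg_x(\phi(x)+\xi x^\omega) \geq \ord_x$... — one checks $\delta(y) > 0$ directly from the defining formula \eqref{delta-defn} under the standing hypothesis $\delta \neq \deg$, or rather one shows $\delta(y) \geq 0$ suffices). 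The mechanism is that if a monomial in the $g_k$ has negative $\delta$-value, the only key form capable of contributing negatively is $g_{n+1}$, and the exponent of $g_{n+1}$ in admissible expressions for a genuine polynomial is constrained so that $\delta(g_{n+1}) \geq 0$ is exactly what prevents any polynomial from acquiring a negative value. I would lean on \cite{contractibility} and \cite{non-negative-valuation} for the precise admissibility statement rather than reprove it.

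For assertion \ref{positive-assertion}, I would split into cases according to the sign of $\delta(g_{n+1})$. If $\delta(g_{n+1}) > 0$, then (using non-negativity from part \ref{non-positive-assertion} as a base and the fact that any polynomial's top monomial either involves $g_{n+1}$ with positive exponent, giving positive value, or is a monomial in $g_0,\ldots,g_n$ only, which already has positive value since those key forms have positive $\delta$) one gets $\delta > 0$ on $\cc[x,y]\setminus\cc$. If $\delta(g_{n+1}) < 0$, then $\delta$ is not even non-negative, so certainly not positive. The delicate case is $\delta(g_{n+1}) = 0$: here $\delta \geq 0$, and $\delta$ fails to be positive precisely when some polynomial $f \in \cc[x,y]\setminus\cc$ has $\delta(f) = 0$ — and I would show such an $f$ exists if and only if $g_{n+1}$ itself (the unique key form that can achieve value $0$ while lying, a priori, only in $\cc[x,x^{-1},y]$) is already a genuine polynomial in $\cc[x,y]$; if every $g_k \in \cc[x,y]$ then every value-zero combination must actually be a combination of the lower positive-value key forms, forcing it to be constant. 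The equivalence of \ref{almost-zero} and \ref{almost-zero'} then follows because $g_{n+1}$ is, by the key-form recursion, a polynomial expression in $g_0,\ldots,g_n$ plus corrections, so $g_{n+1}\notin\cc[x,y]$ forces some earlier $g_k \notin \cc[x,y]$, and conversely the failure of membership propagates up the recursion; alternatively one invokes that the $g_k$ for $k \leq n$ are forced to be polynomials whenever $\bar X$ has the stated form, a fact traceable to the construction in Proposition \ref{compact-prop}.

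The main obstacle I anticipate is making the ``top monomial cannot cancel'' argument fully rigorous in the $\delta(g_{n+1}) = 0$ case, where several distinct monomials in the $g_k$ can share the maximal $\delta$-value $0$ and genuine cancellation among them is possible — one must show that after all such cancellations the result is either constant or still has value $0$ with an ``essential'' appearance of $g_{n+1}$, which is exactly the content being imported from \cite[\S3]{contractibility}. Geometrically this corresponds to analyzing which rational functions are regular on $\bar X \setminus C_1$ (or have poles only along $C_2$ of a controlled order), i.e.\ to computing global sections of the relevant line bundles on the compactification, and the rationality of the singularities of $\bar X$ from Proposition \ref{compact-prop} is what keeps this computation tractable. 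I would therefore organize the proof so that the combinatorial/valuation-theoretic facts about key forms are quoted as black boxes from the cited papers, and only the short deductions above are carried out in detail.
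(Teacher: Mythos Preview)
The paper does not give its own proof of this theorem: it is stated with the attribution ``{\cite[Theorem 1.4]{non-negative-valuation}}'' and used as a black box, so there is nothing in the present paper to compare your argument against. Any genuine comparison would have to be made against the cited preprint.

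That said, your sketch has a real gap you should be aware of. In the ``only if'' direction of assertion~\ref{non-positive-assertion} you write that it is immediate because $g_{n+1}\in\cc[x,y]\setminus\cc$. But the whole point of conditions \ref{almost-zero} and \ref{almost-zero'} is that key forms live only in $\cc[x,x^{-1},y]$ and need not be polynomials; you yourself acknowledge this a few lines later. When $g_{n+1}\notin\cc[x,y]$, the hypothesis ``$\delta\geq 0$ on $\cc[x,y]\setminus\cc$'' says nothing directly about $\delta(g_{n+1})$, and clearing denominators by multiplying with a power of $x$ only gives $\delta(x^m g_{n+1}) = m + \delta(g_{n+1})$, which can be positive even if $\delta(g_{n+1})<0$. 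So this direction genuinely requires producing, from the assumption $\delta(g_{n+1})<0$, an honest polynomial with negative $\delta$-value; that construction is part of the content of \cite{non-negative-valuation} and is not as trivial as you suggest. The rest of your outline --- reducing to monomials in key forms via a MacLane-type expansion, the case split on the sign of $\delta(g_{n+1})$, and the propagation of non-polynomiality up the key-form recursion for the equivalence of \ref{almost-zero} and \ref{almost-zero'} --- is the right shape and matches how the cited source proceeds.
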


To a semidegree $\delta$ on $\cc[x,y]$ we associate a graded ring 
\begin{align}
\cc[x,y]^\delta := \dsum_{d \geq 0} \{f: \delta(f) \leq d\}.
\end{align}
In the case $\delta$ is realized (as in Proposition \ref{compact-prop}) as the order of pole along a curve on a normal surface, $\cc[x,y]^\delta$ can be interpreted as the graded ring of global sections of a divisor. The following results exploit this connection to study finiteness properties of $\cc[x,y]^\delta$.

\begin{prop} \label{non-finite-delta}
Let $f_\delta$ be the last key form of $\delta$. Assume $\delta$ is positive on $\cc[x,y]\setminus \cc$ and $f_\delta$ is {\em not} a polynomial. Then $\cc[x,y]^\delta$ is {\em not} finitely generated over $\cc$. Moreover, 
\begin{enumerate}
\item If $\delta(f_\delta) > 0$, then $\cc[x,y]^\delta_d :=  \{f: \delta(f) \leq d\}$ is a finite dimensional vector space over $\cc$ for all $d$.
\item If $\delta(f_\delta) = 0$, then there exists $d > 0$ such that $\cc[x,y]^\delta_d$ is an infinite dimensional vector space over $\cc$.
\end{enumerate}
\end{prop}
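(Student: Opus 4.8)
The plan is to exploit the geometric realization of $\delta$ from Proposition \ref{compact-prop}. Since $\delta$ is positive on $\cc[x,y]\setminus\cc$, in particular $\delta \neq \deg$, so Proposition \ref{compact-prop} produces a normal projective surface $\bar X \supseteq \cc^2$ with $\bar X_\infty = C_1 \cup C_2$, where $C_1$ realizes $\deg$ and $C_2$ realizes $N\delta$ (with $N$ minimal so that $N\delta$ is integer-valued). Then $\cc[x,y]^\delta = \bigoplus_{d\geq 0}\{f : \delta(f)\leq d\}$, after passing to $N\delta$ and regrading, is (up to a Veronese-type identification) the section ring $\bigoplus_{m\geq 0} H^0(\bar X, \mathcal{O}(mD))$ of a divisor $D$ supported on $\bar X_\infty$; more precisely $D$ is an effective $\Q$-divisor with support in $C_1\cup C_2$ whose sections are exactly the polynomials of bounded $\delta$-value. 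The key point is that finite generation of this section ring is governed by the geometry of $D$ and the curves at infinity.

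First I would make the dictionary precise: a polynomial $f$ with $\delta(f)\leq d$ corresponds to a rational function on $\bar X$ with pole of order $\leq d$ along $C_2$ (weighted suitably) and pole of order $\leq d$ along $C_1$ (since $\delta \leq \deg$), and regular on $\cc^2$; so $\cc[x,y]^\delta_d = H^0(\bar X, \mathcal{O}(dD))$ for the divisor $D = a C_1 + b C_2$ with appropriate positive coefficients. The hypothesis that the last key form $f_\delta$ is not a polynomial is the crucial input: by the theory of key forms (from \cite{contractibility, non-negative-valuation}), $f_\delta \in \cc[x,x^{-1},y]\setminus\cc[x,y]$ means the associated curve $\{f_\delta = 0\}$ (or rather the relevant divisorial data) forces $D$ to fail to be semiample — equivalently, the corresponding line bundle on $\bar X$ has a base locus along $C_1 \cap C_2$ that cannot be resolved by passing to multiples, because clearing the denominator $x^{-1}$ in $f_\delta$ costs degree in a way incompatible with the grading. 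I would argue that if $\cc[x,y]^\delta$ were finitely generated, then (by the standard fact that a finitely generated section ring of a divisor on a normal projective variety comes from a semiample divisor after truncation, or directly via $\mathrm{Proj}$) the divisor $D$ would be semiample, and then $f_\delta$ would be expressible as a polynomial in earlier key forms with the expected $\delta$-value — contradicting the defining property of $f_\delta$ as the last key form (which terminates the sequence precisely because no such expression exists). This gives non-finite-generation.

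For the two dichotomous cases I would use Theorem \ref{positive-thm} together with $H^0$ computations. If $\delta(f_\delta) > 0$: I claim $\delta$ takes only finitely many values $\leq d$ on monomials modulo scaling, or more robustly, that $\{f : \delta(f)\leq d\}$ embeds into $H^0(\bar X, \mathcal{O}(dD))$ which is finite-dimensional because $\bar X$ is projective — indeed any coherent sheaf on a projective variety has finite-dimensional global sections. So case (1) is essentially immediate once the geometric dictionary is set up: $\cc[x,y]^\delta_d = H^0(\bar X, \mathcal{O}(dD))$ and projectivity of $\bar X$ finishes it. If $\delta(f_\delta) = 0$: here $f_\delta$ is a nonconstant rational function (not a polynomial) with $\delta(f_\delta) = 0$, so all powers $f_\delta^k$ satisfy $\delta(f_\delta^k) = 0$; but $f_\delta \notin \cc$, so multiplying $f_\delta^k$ by a fixed polynomial $g$ with $\delta(g) = d_0 > 0$ chosen to clear the $x^{-1}$-denominators of $f_\delta^k$ (the denominators are bounded powers of $x$, uniformly in $k$, since $f_\delta$ has a fixed denominator) produces infinitely many linearly independent polynomials $g f_\delta^k$ all of $\delta$-value $\leq d_0$. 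Hence $\cc[x,y]^\delta_{d_0}$ is infinite-dimensional.

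The main obstacle I anticipate is making the reduction "finitely generated section ring $\Rightarrow$ semiample divisor $\Rightarrow$ $f_\delta$ is a polynomial" fully rigorous on the possibly-singular surface $\bar X$ — one must be careful that $\bar X$ has only rational singularities (guaranteed by Proposition \ref{compact-prop}), pass to a resolution, track the proper transforms of $C_1, C_2$ and the exceptional divisor, and verify that the base-point-freeness of a multiple of $D$ translates back into an algebraic identity expressing $f_\delta$ via earlier key forms. The cleanest route is probably the one used in \cite{non-negative-valuation}: rather than going through $\mathrm{Proj}$ abstractly, directly analyze the intersection-theoretic positivity of $D$ against the curves in $\bar X_\infty$ and their blow-ups, and show that non-polynomiality of $f_\delta$ is exactly the obstruction to $D$ being nef-and-semiample. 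The finite/infinite-dimensionality dichotomy in (1) and (2), by contrast, should be routine once the dictionary is in place.
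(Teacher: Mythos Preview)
Your treatment of the non-finite-generation claim and of case (1) is essentially what the paper does: it also passes to the compactification $\bar X$ of Proposition~\ref{compact-prop}, identifies $\cc[x,y]^\delta$ with $\bigoplus_{d\geq 0} H^0(\bar X,\mathcal O_{\bar X}(dD))$ for a suitable divisor $D$, and reads off finite-dimensionality of each graded piece from projectivity. (The non-finite-generation is deferred to the argument of \cite[Theorem~1.14]{contractibility}, which is the semiample-obstruction story you sketched.)

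Your argument for case (2), however, has a real gap. You assert that the $x^{-1}$-denominators of $f_\delta^k$ are ``bounded powers of $x$, uniformly in $k$,'' and hence a single fixed $g$ clears them. This is false: if $x^m f_\delta\in\cc[x,y]$ with $m$ minimal, then $f_\delta^k$ genuinely involves $x^{-mk}$, so clearing denominators requires multiplying by $x^{mk}$, and $\delta(x^{mk}f_\delta^k)=mk\cdot\delta(x)+k\cdot 0=mk\to\infty$. No fixed $g$ can work, and more generally, since by hypothesis $\delta$ is \emph{strictly positive} on $\cc[x,y]\setminus\cc$, there is no nonconstant polynomial of $\delta$-value $0$ to feed into such an elementary construction. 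The difficulty you are trying to sidestep is exactly the content of the proposition in this case.

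The paper's argument for case (2) is accordingly much less elementary. From $\delta(f_\delta)=0$ it deduces (via \cite{non-negative-valuation}) that the self-intersection $(C_1,C_1)=0$; since $\bar X$ has only rational singularities, $C_1,C_2$ are $\Q$-Cartier, and $D_k:=kC_1+C_2$ is nef for $k\gg 0$, hence a suitable multiple $eD_k$ is ample Cartier. Passing to a resolution $\pi:\tilde X\to\bar X$ and invoking Reider's theorem, the adjoint divisor $3H+\pi^*(3eD_k)+K_{\tilde X}$ is base-point free for every large $k$; reading off the coefficients of $C_1$ and $C_2$ produces polynomials $f_k\in\cc[x,y]$ with $\deg(f_k)$ growing linearly in $k$ but $\delta(f_k)$ bounded by a fixed $d$. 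This is the missing idea: one needs genuine positivity input (Reider, or an equivalent adjoint base-point-freeness statement) to manufacture polynomials whose $\deg$ is large while their $\delta$-value stays bounded.
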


\begin{proof}
It follows from the assumptions that $\delta \neq \deg$. Let $\bar X$ be the compactification of $X := \cc^2$ from Proposition \ref{compact-prop}. Since $\delta$ is positive on $\cc[x,y]\setminus \cc$, Theorem \ref{positive-thm} implies that $\delta(f_\delta) \geq 0$. At first assume $\delta(f_\delta) > 0$. It then follows by the same arguments as in the proof of \cite[Theorem 1.14]{contractibility} that $\cc[x,y]^\delta$ is not finitely generated over $\cc$ and that there exists a divisor $D$ on $\bar X$ such that 
\begin{align*}
\cc[x,y]^\delta = \dsum_{d \geq 0} H^0(\bar X, \mathcal{O}_{\bar X}(dD)).
\end{align*} 
Since $H^0(\bar X, \mathcal{O}_{\bar X}(dD))$ is a finite dimensional vector space over $\cc$ for each $d$, this proves the proposition for the case that $\delta(f_\delta) > 0$. \\

Now assume $\delta(f_\delta) = 0$. It then follows from \cite[identity (11)]{non-negative-valuation} that $(C_1,C_1) = 0$. Since the singularities of $\bar X$ are rational, this implies that $C_1$ and $C_2$ are $\Q$-Cartier divisors. It follows that $D_k := kC_1 + C_2$ is a nef ($\Q$-Cartier) divisor on $\bar X$ for all $k \gg 0$. Pick any positive integer $e$ such that $eC_i$ is a Cartier divisor for each $i$. Then $eD_k$ is an ample Cartier divisor on $\bar X$ for all $k \gg 0$. Let $\pi:\tilde X \to \bar X$ be a resolution of singularities of $\bar X$. Let $H$ be a fixed ample divisor and $K_{\tilde X}$ be the canonical divisor on $\tilde X$. W.l.o.g.\ we may assume that the supports of both $H$ and $K_X$ are contained in $\tilde X \setminus X$. Since $H + \pi^*(eD_k)$ is also ample for each $k$, it follows from a theorem of Reider (see e.g.\ \cite{lazarsfeld}) that $\tilde D_{k} := 3H + \pi^*(3eD_k) + K_{\tilde X}$ is base-point free for each $k$. Let $c_1$ (resp.\ $c_2$)
  be the coefficient of $C_1$ (resp.\ $C_2$) in $3H + K_{\tilde X}$. Since $\tilde D_{k}$ is base-point free, there exists $f_k \in \cc[x,y]$ such that $\deg(f_k) = c_1 + 3ek$ and $\delta(f_k) \leq c_2 + 3e$. Let $d := c_2 + 3e$. It follows that the degree $d$ part of $\cc[x,y]^\delta$ is infinite dimensional over $\cc$, as required.
\end{proof}

\begin{prop}
Let $f_\delta$ be the last key form of $\delta$. Assume one of the following conditions hold:
\begin{enumerate}
\item {\em all} key forms of $\delta$ are polynomials (equivalently, $f_\delta$ is a polynomial), or
\item $\delta(f_\delta) < 0$ (equivalently, there is $f \in \cc[x,y]\setminus\{0\}$ such that $\delta(f) < 0$).
\end{enumerate} 
Then $\cc[x,y]^\delta$ is finitely generated over $\cc$.
\end{prop}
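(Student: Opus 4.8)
The plan is to treat the two hypotheses by genuinely different arguments, after noting that they overlap; concretely it is convenient to split into the cases \emph{(a)} all key forms of $\delta$ are polynomials, and \emph{(b)} $\delta(f_\delta)<0$ while at least one key form is \emph{not} a polynomial --- so that (a) and (b) together cover the hypothesis. Throughout write $g_0=x,\ g_1=y,\ g_2,\dots,g_{n+1}=f_\delta$ for the key forms of $\delta$, set $\omega_i:=\delta(g_i)$ and $\omega_i^{+}:=\max\{0,\omega_i\}$, and recall that $\cc[x,y]^\delta=\bigoplus_{d\ge0}\cc[x,y]^\delta_d$ with $\cc[x,y]^\delta_d=\{f:\delta(f)\le d\}$. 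If $\delta=\deg$ the ring is generated by $t,\,xt,\,yt$, so from now on assume $\delta\ne\deg$; in particular Proposition \ref{compact-prop} is available.

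In case (a) I would pass to the associated graded ring $\mathrm{gr}_\delta\cc[x,y]=\bigoplus_d\cc[x,y]^\delta_d/\{f:\delta(f)<d\}$. By the theory of key forms (see \cite[\S3]{contractibility} and \cite{sub1}), when all $g_i$ are polynomials the $\delta$-leading forms of $g_0,\dots,g_{n+1}$, together with finitely many leading forms of monomials, generate $\mathrm{gr}_\delta\cc[x,y]$ over $\cc$, so this ring is finitely generated. Since the $\delta$-filtration of $\cc[x,y]$ is exhaustive, separated and multiplicative and $\cc[x,y]$ is a domain, finite generation of $\mathrm{gr}_\delta\cc[x,y]$ forces the full $\zz$-graded Rees algebra $\mathcal R:=\bigoplus_{e\in\zz}\{f:N\delta(f)\le e\}\,t^{e}$ to be Noetherian, where $N$ is chosen so that $N\delta$ is integer-valued. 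Now $\cc[x,y]^\delta$ is obtained from $\mathcal R$ by taking its non-negatively graded part and an $N$-th Veronese subring, and both operations turn a finitely generated $\zz$-graded $\cc$-algebra into a finitely generated one (a Gordan's-lemma count on the degrees of a generating set). Hence $\cc[x,y]^\delta$ is finitely generated; the only real work in this case is the bookkeeping with key forms --- the generation statement for $\mathrm{gr}_\delta\cc[x,y]$ and the control of $\delta$-values of products of standard monomials.

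In case (b) I would argue geometrically. By Proposition \ref{compact-prop} there is a normal projective surface $\bar X\supseteq\cc^2$, with only rational singularities, such that $\bar X\setminus\cc^2=C_1\cup C_2$ with $C_1$ realizing $\deg$ and $C_2$ realizing $N\delta$. Since a polynomial $f$ lies in $\cc[x,y]^\delta_d$ exactly when its pole order along $C_2$ is at most $Nd$, one has
\[
\cc[x,y]^\delta\;=\;\bigoplus_{d\ge0}H^0\!\bigl(\bar X\setminus C_1,\ \mathcal O_{\bar X}(Nd\,C_2)\bigr),
\]
the section ring of the $\mathbb Q$-Cartier divisor $C_2$ on the \emph{quasi-projective} surface $U:=\bar X\setminus C_1$ ($C_2$ is $\mathbb Q$-Cartier because rational surface singularities are $\mathbb Q$-factorial). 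The hypothesis enters through \cite[identity (11)]{non-negative-valuation}: the sign of $(C_2,C_2)$ equals that of $\delta(f_\delta)$, so $\delta(f_\delta)<0$ gives $(C_2,C_2)<0$ (the borderline $\delta(f_\delta)=0$ is the case $(C_1,C_1)=0$ appearing in the proof of Proposition \ref{non-finite-delta}). A negative $\mathbb Q$-Cartier curve on such a surface can be contracted (Artin's criterion): there is a birational morphism $\pi\colon\bar X\to Y$ onto a normal projective surface, an isomorphism over $\bar X\setminus C_2$, sending $C_2$ to a still-rational singular point $P$, so that $Y\setminus\cc^2=\pi(C_1)$ is irreducible. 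I would then transport the displayed description along $\pi$ and deduce finite generation from the fact that $P$ is a rational surface singularity: the relevant statement is that the filtration of $\cc[x,y]$ by pole order along the now-exceptional divisorial valuation $v_{C_2}$ of $P$ has finitely generated Rees algebra, which in dimension two and for rational singularities can be obtained by the methods of \cite{non-negative-valuation,contractibility} (stabilisation of antinef closures; alternatively a base-point-freeness argument after passing to a resolution, in the spirit of the use of Reider's theorem in the proof of Proposition \ref{non-finite-delta}).

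The step I expect to be the main obstacle is precisely this last one in case (b): converting the negativity $(C_2,C_2)<0$ into finite generation of the section ring $\bigoplus_d H^0(\bar X\setminus C_1,\mathcal O(Nd\,C_2))$ on the non-complete surface $U$. This is exactly where the hypothesis $\delta(f_\delta)<0$ is genuinely used; without it $C_2$ need not be contractible and the ring need not be finitely generated, which is the content of Proposition \ref{non-finite-delta}. Case (a), by contrast, is essentially combinatorics once the key-form machinery is in place.
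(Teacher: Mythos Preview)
In case (a) your argument and the paper's rest on the same engine: the key-form expansion together with MacLane's theorem, which says that every $f\in\cc[x,y]$ can be written as $\sum_\alpha a_\alpha x^{\alpha_0}f_1^{\alpha_1}\cdots f_l^{\alpha_l}$ (with the usual bounds on the $\alpha_j$) and that $N\delta(f)=\max\{\sum_j\alpha_je_j:a_\alpha\neq0\}$. The paper simply reads off from this that $(1)_1,(f_0)_{e_0},\dots,(f_l)_{e_l}$ generate $\cc[x,y]^{N\delta}$ directly, then passes to the Veronese $\cc[x,y]^\delta$. Your detour through $\mathrm{gr}_\delta$ is more roundabout and has a soft spot: the inference ``$\mathrm{gr}$ finitely generated $\Rightarrow$ Rees Noetherian'' is not a general fact when the filtration is unbounded below, and in case (a) one may well have $\delta(f_\delta)<0$. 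The fix is precisely to invoke the MacLane identity above, at which point the associated-graded layer is no longer doing any work.

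Case (b) is where there is a genuine gap, and the paper's route is quite different from yours. First, your reading of the cited identities is off: in the paper both identity~(11) and Proposition~2.10 of \cite{non-negative-valuation} are used to control $(C_1,C_1)$, not $(C_2,C_2)$; specifically $\delta(f_l)<0$ gives $(C_1,C_1)>0$. Combined with \cite[Proposition~2.11]{non-negative-valuation} (every compact curve on $\bar X$ meets $C_1$, using that $f_l$ is not a polynomial) and Nakai--Moishezon, this makes $C_1$ \emph{ample}, so $\bar X\setminus C_1$ is affine and $\cc[x,y]^\delta_0$, being its ring of regular functions, is finitely generated. For the positive-degree part the paper does not contract anything: it introduces a rank-two map $\nu\colon\cc[x,y]\setminus\{0\}\to\zz^2$ recording both $N\delta(f)$ and the $\xi$-degree of the leading coefficient of $f(u,\phi(u)+\xi u^\omega)$, exhibits elements with $\nu$-values $(k_1,0)$ and $(0,k_2)$ so that the semigroup $G_+=\nu(\{f:\delta(f)\ge0\})\subseteq\zz_{\ge0}^2$ is finitely generated, and then shows by a leading-term subtraction that finitely many $(g_j)_{\delta(g_j)}$ generate $\cc[x,y]^\delta$ as an algebra over $\cc[x,y]^\delta_0$. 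Your alternative---contract $C_2$ and appeal to rationality of the resulting singularity---is exactly the step you flag as unresolved; note also that since $C_1\cap C_2\neq\emptyset$ the contracted point lies on $\pi(C_1)$, so it does not even sit inside the image of $U$, and the translation of the section ring on the non-complete $U$ into a local statement at that point is not straightforward. The paper's ampleness-of-$C_1$ argument sidesteps all of this.
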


\begin{proof}
Let the key forms of $\delta$ be $f_0 = x, f_1 = y, f_2, \ldots, f_l$. At first we assume that all $f_k$'s are polynomials. Pick a positive integer $N$ such that $N\delta$ is integer-valued. It suffices to show that $\cc[x,y]^{N\delta}$ is finitely generated over $\cc$, where 
\begin{align*}
\cc[x,y]^{N\delta} = \dsum_{d \geq 0} \{f: N\delta(f) \leq d\},
\end{align*}
(since $\cc[x,y]^{N\delta}$ is integral over $\cc[x,y]^{\delta}$). Let $e_j := N\delta(f_j)$, $1 \leq j \leq l$. Let us denote by $(f_j)_{e_j}$ the `copy' of $f_j$ in the $e_j$-th graded component of $\cc[x,y]^{N\delta}$. We now show that $\cc[x,y]^{N\delta}$ is generated as a $\cc$-algebra by $\{(1)_1\} \cup \{(f_j)_{e_j}: 0 \leq j \leq l\}$. Indeed, pick $f \in \cc[x,y]$. Recall (from \cite[Proposition 3.28]{contractibility}) that for each $j \geq 1$, $f_j$ is monic in $y$ (as a polynomial in $y$ with coefficients in $\cc[x]$) and $\deg_y(f_j)$ divides $\deg_y(f_{j+1})$ for $1 \leq j \leq l-1$. It follows that given an $f \in \cc[x,y]$, $f$ has an expression of the form 
\begin{align*}
f = \sum_{\alpha \in \zz_{\geq 0}^{l+2}} a_\alpha x^{\alpha_0}f_1^{\alpha_1} \cdots f_l^{\alpha_l},
\end{align*}
for $a_\alpha \in \cc$ and $\alpha_j < \deg_y(f_{j+1})/\deg_y(f_j)$ for $1 \leq j \leq l-1$. \cite[Theorem 16.1]{maclane-key} then implies that $N\delta(f) = \max\{N\delta(x^{\alpha_0}f_1^{\alpha_1} \cdots f_l^{\alpha_l}): a_\alpha \neq 0\}$. It then immediately follows that
\begin{align*}
(f)_{N\delta(f)} = \sum_{\alpha \in \zz_{\geq 0}^{l+2}} a_\alpha \left((x)_{e_0}\right)^{\alpha_0}\left((f_1)_{e_1}\right)^{\alpha_1} \cdots \left((f_l)_{e_l}\right)^{\alpha_l} \left((1)_1\right)^{N\delta(f) - \sum \alpha_je_j},
\end{align*}
as required to show that $\cc[x,y]^{N\delta}$ is finitely generated as an algebra over $\cc$.\\

Now assume that $\delta(f_l) < 0$. W.l.o.g.\ we may (and will) also assume that $f_l$ is {\em not} a polynomial. Define a map $\nu: \cc[x,y]\setminus\{0\} \to \zz^2$ as follows: for every $g \in \cc[x,y]\setminus\{0\}$, $g|_{y = \phi(x) + \xi x^\omega}$ is of the form $x^\alpha g_\alpha(\xi) + \lot$ for some $\alpha \in \Q$ and $g_\alpha \in \cc[\xi]$ (where $\lot$ denotes `lower-order terms' in $x$). Then set $\nu(g) := (N\alpha, \deg_\xi(g_\alpha))$.

\begin{proclaim} \label{nu-f-claim}
There exists $f \in \cc[x,y]$ such that $\nu(f) = (0,k)$ for some positive integer $k$. 
\end{proclaim}

\begin{proof}
Indeed, \cite[Theorem 1.7]{non-negative-valuation} implies that there exists $h \in \cc[x,y]$ such that $\delta(h) < 0$. Then $f := h^a x^b$ for suitable non-negative integers $a,b$ satisfies the claim.
\end{proof}

Let $G_+ := \{\nu(g): g \in \cc[x,y],\ \delta(g) \geq 0\}$. Then $G_+$ is a sub-semigroup of $\zz_{\geq 0}^2$. Moreover, since $\nu(x)$ is of the form $(k_1,0)$ and $\nu(f)$ (where $f$ is as in Claim \ref{nu-f-claim}) is of the form $(0,k_2)$ for positive integers $k_1,k_2$, it follows that $\zz_{\geq 0}^2$ is {\em integral} over $G_+$, and therefore $G_+$ is a finitely generated semigroup. Pick $g_1, \ldots, g_k$ such that $\nu(g_j)$'s generate $G_+$. Let $d_j := \delta(g_j)$, $1 \leq j \leq k$. The proposition follows from the following claims.

\begin{proclaim} 
$(g_j)_{d_j}$, $1 \leq j \leq k$, generated $\cc[x,y]^\delta$ as an algebra over $\cc[x,y]^\delta_0 :=  \{f: \delta(f) \leq 0\}$.
\end{proclaim}

\begin{proof}
Let $\prec$ be the lexicographic order on $\zz_{\geq 0}^2$. The claim follows from the observation that if $\nu(g) = \sum \alpha_j\nu(g_j)$, then there exists (a unique) $c \in \cc\setminus\{0\}$ such that $\nu(g - cg_1^{\alpha_1} \cdots g_k^{\alpha_k}) \prec \nu(g)$. 
\end{proof}

\begin{proclaim} 
$\cc[x,y]^\delta_0$ is a finitely generated $\cc$-algebra.
\end{proclaim}

\begin{proof}
Note that $\delta \neq \deg$. Let $\bar X$ be the compactification of $\cc^2$ from Proposition \ref{compact-prop}. Since the singularities of $\bar X$ are rational, it follows that $C_1$ is a $\Q$-Cartier divisor. Since $\delta(f_l) < 0$, \cite[Proposition 2.10]{non-negative-valuation} implies that $(C_1,C_1) > 0$. Moreover, since $f_l$ is not a polynomial, \cite[Proposition 2.11]{non-negative-valuation} implies that every compact curve on $\bar X$ intersects $C_1$. It follows from the Nakai-Moishezon criterion that $C_1$ is an ample divisor and therefore $\bar X \setminus C_1$ is an affine variety. The claim follows from the observation that $\cc[x,y]^\delta_0$ is precisely the ring of regular functions on $\bar X \setminus C_1$.
\end{proof}
\renewcommand{\qed}{}
\end{proof}

\begin{cor}[Characterization of finiteness properties  of {$\cc[x,y]^\delta$}] \label{finite-cor}
\mbox{}
\begin{enumerate}
\item \label{finite-generation} $\cc[x,y]^\delta$ is not a finitely generated algebra over $\cc$ iff both of the following conditions holds:
\begin{enumerate}
\item $\delta$ is non-negative on $\cc[x,y]$, and 
\item there is a key form of $\delta$ which is not a polynomial (or equivalently, the last key form of $\delta$ is not a polynomial).
\end{enumerate} 
\item \label{finite-d} $\cc[x,y]^\delta_d$ is a finite dimensional vector space over $\rr$ for all $d \geq 0$ iff $\delta(f_\delta) > 0$, where $f_\delta$ is the last key form of $\delta$.
\end{enumerate} 
\end{cor}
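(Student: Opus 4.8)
The plan is to deduce Corollary~\ref{finite-cor} from three results established above --- Theorem~\ref{positive-thm}, Proposition~\ref{non-finite-delta}, and the (unlabelled) Proposition stated immediately before the corollary --- mostly by a case analysis, with one small extra argument needed for the finite-dimensionality statements. Write $f_\delta$ for the last key form of $\delta$ (called $g_{n+1}$ in Theorem~\ref{positive-thm} and $f_l$ in the preceding Proposition), and fix a positive integer $N$ with $N\delta$ integer-valued. Since $\delta(c) = 0$ for every nonzero constant $c$, non-negativity of $\delta$ on $\cc[x,y]$ is the same as non-negativity on $\cc[x,y]\setminus\cc$, so by part~\ref{non-positive-assertion} of Theorem~\ref{positive-thm} condition~(a) in part~\ref{finite-generation} is equivalent to $\delta(f_\delta)\geq 0$; I use this translation freely, along with the equivalence (recorded in the statement) between ``some key form of $\delta$ is not a polynomial'' and ``$f_\delta$ is not a polynomial''.

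For part~\ref{finite-generation}, the ``only if'' direction is the contrapositive: if (b) fails then all key forms are polynomials and the Proposition preceding the corollary gives that $\cc[x,y]^\delta$ is finitely generated; if (a) fails then $\delta(f_\delta) < 0$ and the same Proposition again gives finite generation. For the ``if'' direction, assume (a) and (b), so $\delta(f_\delta)\geq 0$ and $f_\delta\notin\cc[x,y]$. If $\delta(f_\delta) > 0$, the first alternative in part~\ref{positive-assertion} of Theorem~\ref{positive-thm} shows that $\delta$ is positive on $\cc[x,y]\setminus\cc$; if $\delta(f_\delta) = 0$, then, as $f_\delta$ is not a polynomial, the last alternative in that part shows the same. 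In either case the hypotheses of Proposition~\ref{non-finite-delta} are met, so $\cc[x,y]^\delta$ is not finitely generated.

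For part~\ref{finite-d}, suppose first that $\delta(f_\delta) > 0$; then $\delta$ is positive on $\cc[x,y]\setminus\cc$ by part~\ref{positive-assertion} of Theorem~\ref{positive-thm}. If $f_\delta$ is not a polynomial, Proposition~\ref{non-finite-delta}(1) gives directly that every $\cc[x,y]^\delta_d$ is finite-dimensional. If $f_\delta$ is a polynomial, then all key forms $f_0 = x, f_1 = y, f_2,\ldots,f_l$ are nonconstant polynomials, so positivity of $\delta$ forces $e_j := N\delta(f_j) > 0$ for every $j$; hence in the generating set $\{(1)_1\}\cup\{(f_j)_{e_j}: 0\leq j\leq l\}$ of $\cc[x,y]^{N\delta}$ exhibited in the proof of the preceding Proposition every generator has strictly positive degree, so each graded piece of $\cc[x,y]^{N\delta}$, and therefore each $\cc[x,y]^\delta_d = \{f: N\delta(f)\leq Nd\}$, is finite-dimensional. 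Conversely, suppose $\delta(f_\delta)\leq 0$. If $\delta(f_\delta) = 0$ and $f_\delta\notin\cc[x,y]$, then $\delta$ is positive on $\cc[x,y]\setminus\cc$ and Proposition~\ref{non-finite-delta}(2) produces a $d$ with $\cc[x,y]^\delta_d$ infinite-dimensional. In every remaining case there is a nonconstant $h\in\cc[x,y]$ with $\delta(h)\leq 0$: take $h = f_\delta$ if $f_\delta$ is a polynomial, and otherwise $\delta(f_\delta) < 0$, so part~\ref{non-positive-assertion} of Theorem~\ref{positive-thm} provides a nonconstant $h\in\cc[x,y]$ with $\delta(h) < 0$. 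Writing $N\delta(h) =: -e'$ with $e'$ a nonnegative integer and using $\delta(x) = 1$ together with additivity of the semidegree $\delta$ on products, the polynomials $x^{ke'}h^{kN}$ ($k\geq 1$) satisfy $\delta(x^{ke'}h^{kN}) = ke' + kN\delta(h) = 0$, so they lie in $\cc[x,y]^\delta_0$, while their total degrees $k(e' + N\deg(h))$ are positive (as $h$ is nonconstant) and pairwise distinct; hence $\cc[x,y]^\delta_0$ is infinite-dimensional. The step I expect to need the most care is the finite-dimensionality half of part~\ref{finite-d} when $f_\delta$ is a polynomial, where one must re-enter the explicit generating set built in the proof of the preceding Proposition and verify that positivity of $\delta$ places all of its generators in positive degree; the rest is routine case-chasing feeding into Theorem~\ref{positive-thm}, Proposition~\ref{non-finite-delta}, and the Proposition preceding the corollary.
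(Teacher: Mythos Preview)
Your proof is correct and follows the same overall case analysis as the paper: both deduce everything from Theorem~\ref{positive-thm}, Proposition~\ref{non-finite-delta}, and the unlabelled Proposition preceding the corollary. The paper is much terser, simply stating that all assertions except one follow immediately.

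The one genuine difference is exactly where you anticipated it. For the finite-dimensionality of $\cc[x,y]^\delta_d$ when $\delta(f_\delta)>0$ and $f_\delta$ is a polynomial, the paper does not go back into the generating set of $\cc[x,y]^{N\delta}$; instead it says this case follows by the same argument as in the proof of assertion~(1) of Proposition~\ref{non-finite-delta}, namely the identification $\cc[x,y]^\delta \cong \bigoplus_{d\geq 0} H^0(\bar X,\mathcal{O}_{\bar X}(dD))$ for a divisor $D$ on the projective surface $\bar X$ of Proposition~\ref{compact-prop}, which forces each graded piece to be finite-dimensional. Your route---observing that all generators $(1)_1,(f_j)_{e_j}$ of $\cc[x,y]^{N\delta}$ sit in strictly positive degree once $\delta$ is positive on nonconstant polynomials---is more elementary and avoids invoking the compactification, at the cost of re-entering the proof of the preceding Proposition. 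Both arguments are short and valid; the paper's is cleaner to state, yours is more self-contained. Your explicit construction $x^{ke'}h^{kN}$ for the converse direction of part~\ref{finite-d} is also more detailed than what the paper writes (the paper subsumes these cases under ``follows immediately''); note that simply taking powers $h^k$ would already suffice there, since $\delta(h^k)=k\delta(h)\leq 0$ while the $h^k$ have unbounded degree.
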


\begin{proof}
The only assertion which does not follow immediately from the preceding results is that $\cc[x,y]^\delta_d$ is a finite dimensional vector space over $\rr$ for all $d \geq 0$ in the case that $\delta(f_\delta) > 0$ and $f_\delta$ is a polynomial. But this follows by the same argument as in the proof of the first assertion of Proposition \ref{non-finite-delta}. 
\end{proof}

\subsubsection{Applications to the single tentacle case}
Throughout this subsection $S$ is assumed to be a semi-algebraic subset of $\rr^2$ which satisfies \ref{single-assumption} and \ref{regular-assumption}. Propositions \ref{semi-effective}--\ref{single-finite}, which are immediate applications of the results from Sections \ref{single-subsection} and \ref{background-subsection}, completely answers Questions \ref{one}--\ref{four} for $S$. Proposition \ref{single-moment} partially solves the moment problem for such $S$.

\begin{prop}[A necessary and sufficient criterion for $\bar \delta_S$ to be a semidegree] \label{semi-effective}
Let $f_S$ be the last key form of $\delta^*_S$. Then the following are equivalent: 
\begin{enumerate}
\item $\bar \delta_S$ is a semidegree.
\item $\delta^*_S$ is non-negative on $\rr[x,y]$.
\item $\delta^*_S(f_S) \geq 0$. 
\end{enumerate}
\end{prop}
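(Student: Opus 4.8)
The plan is to split the cycle of equivalences as ``(1)$\Leftrightarrow$(2)'', which is essentially already in hand, and ``(2)$\Leftrightarrow$(3)'', which is the real content and is a routine application of the key form machinery of Section~\ref{background-subsection} after passing to $\cc$-coefficients. For the first equivalence: by Remark~\ref{semi-remark}, $\delta^*_S$ is a semidegree, hence satisfies Property~\ref{mtive} with equality and so $\delta^*_S(f^n) = n\,\delta^*_S(f)$; thus its normalization $\bar\delta^*_S$ coincides with $\delta^*_S$ itself, and the Corollary following Remark~\ref{semi-remark} becomes precisely ``$\bar\delta_S$ is a semidegree $\iff$ $\delta^*_S$ is non-negative on $\rr[x,y]$''. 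Before doing this I would dispense with the case in which $\bar S$ meets $L_\infty$ in more than one point: by Lemma~\ref{single-degree-lemma} one then has $\delta_S = \bar\delta_S = \deg$, which is a semidegree, is positive on $\rr[x,y]\setminus\rr$, and whose last key form $y$ has positive value, so all three conditions hold and the proposition is trivial. Hence I may assume $\bar S\cap L_\infty$ is the single point $P$ of Lemma~\ref{single-tentacle-lemma}, so that $\delta^*_S$ and $f_S$ are defined.

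For ``(2)$\Leftrightarrow$(3)'' I would first complexify. The generic degree-wise Puiseux series $\phi(u)+\xi u^\omega$ of Definition~\ref{deg-wise-puiseux} has real coefficients (it is the common initial part of the real Puiseux expansions $\phi_i$ in Lemma~\ref{single-tentacle-lemma}), so formula~\eqref{delta^*_S} also defines a function $\delta$ on $\cc[x,y]\setminus\{0\}$, and even on $\cc[x,x^{-1},y]\setminus\{0\}$; a direct check — using that every exponent occurring in $\phi$ exceeds $\omega$ — shows $\omega < \ord_u(\phi)$, so $\delta$ is a semidegree on $\cc[x,y]$ of the form~\eqref{delta-defn}. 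Three observations then do the work: $\delta$ restricts to $\delta^*_S$ on $\rr[x,y]$; the key forms of $\delta$ depend only on the pair $(\phi,\omega)$, hence are exactly the key forms attached to $\delta^*_S$, so $f_S$ is the last key form of $\delta$ and $\delta(f_S) = \delta^*_S(f_S)$; and, because $\phi$ is real, $\delta$ is invariant under complex conjugation of coefficients. The last point gives that $\delta$ is non-negative on $\cc[x,y]\setminus\{0\}$ if and only if $\delta^*_S$ is non-negative on $\rr[x,y]\setminus\{0\}$: the implication from $\cc$ to $\rr$ is just restriction, and conversely, if $\delta(f)<0$ then writing $f = \mathrm{Re}(f) + i\,\mathrm{Im}(f)$ with $\mathrm{Re}(f),\mathrm{Im}(f)\in\rr[x,y]$ and using $\delta(\bar f) = \delta(f)$ together with Property~\ref{ative}, at least one of $\mathrm{Re}(f),\mathrm{Im}(f)$ is a nonzero real polynomial of negative $\delta$-value.

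Finally, since $\delta$ vanishes on $\cc\setminus\{0\}$, ``non-negative on $\cc[x,y]\setminus\{0\}$'' is the same as ``non-negative on $\cc[x,y]\setminus\cc$'', so Theorem~\ref{positive-thm}(\ref{non-positive-assertion}) gives that this holds iff $\delta(f_S)\geq 0$, i.e.\ iff $\delta^*_S(f_S)\geq 0$; chaining this with the previous paragraph yields ``(2)$\Leftrightarrow$(3)'' and completes the argument. I do not foresee a real obstacle here: the proposition is, as the text indicates, an immediate consequence of earlier results, and the only genuinely new ingredient is the short real-to-complex reduction, which rests on the reality of $\phi$ and the ultrametric inequality~\ref{ative} for semidegrees. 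The one point deserving care is purely notational — checking that ``the last key form of $\delta^*_S$'' in the statement is the same object $g_{n+1}$ to which Theorem~\ref{positive-thm} refers and that its $\delta^*_S$- and $\delta$-values agree — which, as noted, is clear once $\delta$ is introduced as the complexification.
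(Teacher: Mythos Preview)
Your proposal is correct and follows exactly the route the paper intends: the paper gives no explicit proof of Proposition~\ref{semi-effective} but declares it an ``immediate application of the results from Sections~\ref{single-subsection} and~\ref{background-subsection}'', and you have simply spelled out that application --- the Corollary after Remark~\ref{semi-remark} for (1)$\Leftrightarrow$(2), and Theorem~\ref{positive-thm}\eqref{non-positive-assertion} for (2)$\Leftrightarrow$(3). The one point you make explicit that the paper suppresses is the passage from $\rr[x,y]$ to $\cc[x,y]$ before invoking Theorem~\ref{positive-thm}; your conjugation argument for this is clean and correct, and the case split on $|\bar S\cap L_\infty|$ is harmless extra care (the paper tacitly assumes the single-point case throughout this subsection, since otherwise $\delta^*_S$ and $f_S$ are not defined).
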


\begin{prop}[A necessary and sufficient criterion for $\scrB_0(S) = \rr$] \label{single-0}
Let $f_S$ be the last key form of $\delta^*_S$. Then $\scrB_0(S) = \rr$ iff one of the following holds:
\begin{enumerate}
\item $\delta^*_S(f_S) > 0$, or
\item $\delta^*_S(f_S) = 0$ and $f_S$ is {\em not} a polynomial. 
\end{enumerate}
\end{prop}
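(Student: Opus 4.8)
The plan is to reduce the statement about $\scrB_0(S)$ to the already-established characterization of positivity of semidegrees on $\cc[x,y]$, namely Theorem \ref{positive-thm}. Recall that by definition $\scrB_0(S) = \{f \in \rr[x,y] : \delta_S(f) \leq 0\}$, and that $\delta_S = \lceil \bar\delta_S \rceil$ by assertion \ref{single-2} of Lemma \ref{single-tentacle-lemma}. Hence $\scrB_0(S) = \{f : \bar\delta_S(f) \leq 0\}$. Since by Lemma \ref{single-tentacle-lemma}\ref{single-1} we have $\bar\delta_S = \max\{0, \delta^*_S\}$, a nonconstant polynomial $f$ satisfies $\bar\delta_S(f) \leq 0$ if and only if $\delta^*_S(f) \leq 0$. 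Thus $\scrB_0(S) = \rr$ (i.e.\ contains only constants) if and only if $\delta^*_S(f) > 0$ for every nonconstant $f \in \rr[x,y]$, which is precisely the statement that $\delta^*_S$ is positive on $\rr[x,y]\setminus\rr$.

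The next step is to pass from $\rr[x,y]$ to $\cc[x,y]$ so that Theorem \ref{positive-thm} applies. First I would observe that $\delta^*_S$, as defined in \eqref{delta^*_S}, extends verbatim to a semidegree on $\cc[x,y]$ via the same formula $\delta^*_S(f) = \deg_u(f(u, \phi(u)+\xi u^\omega))$ (the generic degree-wise Puiseux series $\phi(u)+\xi u^\omega$ has real, in fact rational, coefficients, so this is unambiguous). I then need the elementary fact that $\delta^*_S$ is positive on $\rr[x,y]\setminus\rr$ if and only if it is positive on $\cc[x,y]\setminus\cc$. One direction is trivial. For the other, given a nonconstant $f \in \cc[x,y]$ with $\delta^*_S(f) \leq 0$, write $f = f_1 + i f_2$ with $f_1, f_2 \in \rr[x,y]$; by Property \ref{mtive} (with equality, since $\delta^*_S$ is a semidegree) and Property \ref{ative} one gets $\delta^*_S(f_1), \delta^*_S(f_2) \leq 0$ after multiplying by the conjugate, or more directly: $\delta^*_S(f \bar f) = 2\delta^*_S(f)\cdot\text{(something)} \leq 0$ and $f\bar f = f_1^2 + f_2^2 \in \rr[x,y]$ is nonconstant unless $f$ is, so one of $f_1, f_2$ is a nonconstant real polynomial with $\delta^*_S$-value $\leq 0$. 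Hence the two notions of positivity coincide.

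Finally, with $\delta^*_S$ viewed as a semidegree on $\cc[x,y]$ and $f_S = g_{n+1}$ its last key form, Theorem \ref{positive-thm}\ref{positive-assertion} says $\delta^*_S$ is positive on $\cc[x,y]\setminus\cc$ if and only if either $\delta^*_S(f_S) > 0$, or $\delta^*_S(f_S) = 0$ and some key form is not a polynomial — and by the last sentence of Theorem \ref{positive-thm} the latter condition is equivalent to $f_S$ itself not being a polynomial. Combining with the first paragraph gives exactly the stated dichotomy. The main obstacle is the real-versus-complex reduction in the second paragraph: one must be slightly careful that the key forms computed for $\delta^*_S$ over $\cc$ genuinely govern positivity over $\rr$, but this is handled cleanly by the $f \mapsto f\bar f$ trick together with the semidegree property (equality in \ref{mtive}), so no new machinery is needed.
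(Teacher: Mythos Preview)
Your argument is correct and is exactly the route the paper intends: the authors simply state that Propositions \ref{semi-effective}--\ref{single-finite} are ``immediate applications of the results from Sections \ref{single-subsection} and \ref{background-subsection}'' and give no further proof, so you have faithfully unpacked what they left implicit --- namely, that $\scrB_0(S)=\rr$ is equivalent to positivity of $\delta^*_S$ on nonconstants via Lemma \ref{single-tentacle-lemma}, and then Theorem \ref{positive-thm} delivers the key-form criterion. The one point you handle that the paper glosses over entirely is the passage from $\rr[x,y]$ to $\cc[x,y]$ (Theorem \ref{positive-thm} being stated over $\cc$); your $f\mapsto f\bar f$ argument, using that $\phi$ has real coefficients so $\delta^*_S(\bar f)=\delta^*_S(f)$, is the clean way to do this, and in fact your first suggestion also works directly: from $f_1=(f+\bar f)/2$ and Property \ref{ative} one gets $\delta^*_S(f_1)\leq\max\{\delta^*_S(f),\delta^*_S(\bar f)\}\leq 0$, and similarly for $f_2$.
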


\begin{prop}[A necessary and sufficient criterion for the failure of $\scrB(S)$ to be finitely generated] \label{single-finite}
$\scrB(S)$ is not a finitely generated algebra over $\scrB_0(S)$ iff both of the following conditions holds:
\begin{enumerate}
\item $\delta^*_S$ is non-negative on $\rr[x,y]$ (or equivalently, the $\delta^*_S$-value of the last key form of $\delta^*_S$ is non-negative), and 
\item there is a key form of $\delta^*_S$ which is not a polynomial (or equivalently, the last key form of $\delta^*_S$ is not a polynomial).
\end{enumerate} 
Moreover, if $\scrB(S)$ is not a finitely generated algebra over $\scrB_0(S)$, then 
\begin{enumerate}
\item $\scrB_0(S) = \rr$.
\item Let $f_S$ be the last key form of $\delta^*_S$. Then
\begin{enumerate}
\item If $\delta^*_S(f_S) > 0$, then $\scrB_d(S)$ is a finite dimensional vector space over $\rr$ for all $d$.
\item If $\delta^*_S(f_S) = 0$, then there exists $d > 0$ such that $\scrB_d(S)$ is an infinite dimensional vector space over $\rr$.
\end{enumerate}
\end{enumerate}  
\end{prop}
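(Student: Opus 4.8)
The plan is to deduce Proposition~\ref{single-finite} directly from the structural results of Section~\ref{single-subsection} together with the characterization of finiteness properties of $\cc[x,y]^\delta$ in Corollary~\ref{finite-cor}. The key conceptual link is that $\bar\delta_S$ is computed (via Lemma~\ref{single-tentacle-lemma}) from the semidegree $\delta^*_S$, which is defined by the generic degree-wise Puiseux series $\phi(u)+\xi u^\omega$; but $\delta^*_S$ is a complex semidegree of exactly the form \eqref{delta-defn}, so all results of Section~\ref{background-subsection} apply to it. More precisely, I would first observe that by Lemma~\ref{single-tentacle-lemma}\eqref{single-1}--\eqref{single-2}, $\scrB_d(S) = \bar\scrB_d(S) = \{f : \delta^*_S(f) \leq d\}$ for $d \geq 1$, and $\scrB_0(S) = \{f : \delta^*_S(f) \leq 0\}$ since $\bar\delta_S = \max\{0,\delta^*_S\}$ forces $\bar\delta_S(f)=0$ exactly when $\delta^*_S(f)\leq 0$. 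Hence $\scrB(S)$ agrees (as a graded algebra, in each positive degree, with $\scrB_0(S)$ sitting in degree zero) with $\cc[x,y]^{\delta^*_S}$ intersected with $\rr[x,y]$; the finite generation questions therefore transfer.

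Next I would handle the main equivalence. By Corollary~\ref{finite-cor}\eqref{finite-generation}, $\cc[x,y]^{\delta^*_S}$ fails to be finitely generated over $\cc$ iff $\delta^*_S$ is non-negative on $\cc[x,y]$ and the last key form $f_S$ of $\delta^*_S$ is not a polynomial. I would then argue that this is equivalent to $\scrB(S)$ failing to be finitely generated over $\scrB_0(S)$: in one direction, if $\scrB(S)$ were finitely generated over $\scrB_0(S)$ and $\scrB_0(S)$ is itself finitely generated over $\rr$, then $\scrB(S)$ would be finitely generated over $\rr$, and via the correspondence with $\cc[x,y]^{\delta^*_S}$ (taking complexifications, i.e. tensoring with $\cc$, which preserves finite generation) this would contradict Corollary~\ref{finite-cor}. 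So I first need to know $\scrB_0(S)$ is finitely generated over $\rr$ whenever $\scrB(S)$ is finitely generated over $\scrB_0(S)$ — but this is automatic since a finitely generated algebra over a Noetherian-type base behaves well; more carefully, I would instead note that in the relevant case conditions (1)--(2) of the moreover-part give $\scrB_0(S)=\rr$, so this subtlety only bites in the converse, which is the one I actually need. Conversely, if conditions (1)--(2) hold, then $\delta^*_S(f_S)\geq 0$, so by Proposition~\ref{semi-effective} and Proposition~\ref{single-0} one checks $\scrB_0(S)=\rr$ (this is the first bullet of the ``moreover''), and then $\scrB(S)$ finitely generated over $\scrB_0(S)=\rr$ would make $\cc[x,y]^{\delta^*_S}$ finitely generated, contradicting Corollary~\ref{finite-cor}\eqref{finite-generation}.

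For the ``moreover'' statements: assertion (1), that $\scrB_0(S)=\rr$ under conditions (1)--(2), follows because condition (1) says $\delta^*_S(f_S)\geq 0$, i.e. $\delta^*_S$ is non-negative; combined with condition (2) ($f_S$ not a polynomial), Theorem~\ref{positive-thm}\eqref{positive-assertion}\eqref{almost-zero'} shows $\delta^*_S$ is in fact \emph{positive} on $\cc[x,y]\setminus\cc$, hence on $\rr[x,y]\setminus\rr$, which gives $\scrB_0(S)=\{f:\delta^*_S(f)\leq 0\}=\rr$ (alternatively cite Proposition~\ref{single-0}). Assertions (2)(a) and (2)(b) are then immediate from Proposition~\ref{non-finite-delta} applied to $\delta=\delta^*_S$ (whose hypotheses — $\delta$ positive on $\cc[x,y]\setminus\cc$, $f_\delta$ not a polynomial — are exactly what we just established), together with the identity $\scrB_d(S)=\{f\in\rr[x,y]:\delta^*_S(f)\leq d\}$ for $d\geq 1$: finite-dimensionality over $\cc$ of $\{f\in\cc[x,y]:\delta^*_S(f)\leq d\}$ descends to finite-dimensionality over $\rr$ of its real points, and infinite-dimensionality likewise transfers.

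The main obstacle I anticipate is bookkeeping the passage between the real algebra $\scrB(S)$ (graded by the integer-valued $\bar\delta_S$, with $\scrB_0(S)$ possibly larger than $\rr$) and the complex graded ring $\cc[x,y]^{\delta^*_S}$ (graded by the $\Q$-valued $\delta^*_S$, after clearing denominators with some $N$). One must be careful that (i) finite generation of $\scrB(S)$ over $\scrB_0(S)$ versus over $\rr$ differ only when $\scrB_0(S)\neq\rr$, which is precisely excluded in the cases where the ``not finitely generated'' conclusion is asserted; (ii) replacing $\delta^*_S$ by $N\delta^*_S$ to get an integer grading does not affect finite generation (as already used in the proof of the preceding proposition, since $\cc[x,y]^{N\delta}$ is integral, hence module-finite in each degree, over $\cc[x,y]^\delta$); and (iii) complexification is faithfully flat so finite generation of $\rr$-algebras and of their complexifications are equivalent. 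Once these three points are pinned down, the proposition is a direct translation of Corollary~\ref{finite-cor} and Proposition~\ref{non-finite-delta} through the dictionary $\scrB_d(S) \leftrightarrow \{f:\delta^*_S(f)\leq d\}$ provided by Lemma~\ref{single-tentacle-lemma}.
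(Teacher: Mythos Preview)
Your proposal is correct and matches the paper's intended approach: the paper states that Propositions~\ref{semi-effective}--\ref{single-finite} are ``immediate applications of the results from Sections~\ref{single-subsection} and~\ref{background-subsection}'' and gives no further proof, and you have accurately spelled out the dictionary (via Lemma~\ref{single-tentacle-lemma}) identifying $\scrB_d(S)$ with the real points of $\cc[x,y]^{\delta^*_S}_d$ and then invoked Corollary~\ref{finite-cor}, Proposition~\ref{non-finite-delta}, and Proposition~\ref{single-0}/Theorem~\ref{positive-thm} exactly as required. The real-versus-complex bookkeeping you flag (faithfully flat base change, and that $\scrB_0(S)=\rr$ in the non-finitely-generated case so ``over $\scrB_0(S)$'' and ``over $\rr$'' coincide) is handled correctly.
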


\begin{prop}[Partial solution of the moment problem for $S$] \label{single-moment}
\mbox{}
\begin{enumerate}
\item The moment problem for $S$ cannot be solved by finitely many polynomials in the following cases:
\begin{enumerate}
\item \label{positively-non-solvable} $\delta^*_S(f_S) > 0$, or
\item \label{zero-pol-non-solvable} $\delta^*_S(f_S) = 0$, $f_S$ is a polynomial, and the curve $f_S = \xi$ for generic $\xi$ for generic $\xi$ has genus $\geq 1$. 
\end{enumerate}
\item \label{partial-solvable} Assume one of the following conditions is satisfied:
\begin{enumerate}
\item \label{negatively-solvable} $\delta^*_S(f_S) < 0$, or
\item \label{zero-pol-solvable} $\delta^*_S(f_S) = 0$ and $f_S$ is a polynomial, and the curve $f_S = \xi$ for generic $\xi$ is rational.  
\end{enumerate}
Then there is a compact subset $V$ of $S$ such that the moment problem for $S\setminus V$ can be solved by finitely many polynomials.
\end{enumerate}

\end{prop}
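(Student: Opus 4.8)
The plan is to transfer the four cases to statements about the complex semidegree $\delta:=\delta^*_S$ of Lemma~\ref{single-tentacle-lemma} (extended to $\cc[x,y]$), its key forms $g_0=x,g_1=y,\dots,g_{n+1}=f_S$, and the compactification $\bar X\supseteq\cc^2$ of Proposition~\ref{compact-prop}, whose boundary $\bar X_\infty=C_1\cup C_2$ carries $\deg$ along $C_1$ and $N\delta$ along $C_2$. Throughout I would use $\bar\delta_S=\max\{0,\delta^*_S\}$, $\scrB_d(S)=\bar\scrB_d(S)=\{f\in\rr[x,y]:\delta^*_S(f)\le d\}$ from Section~\ref{single-subsection}, and Corollary~\ref{finite-cor} together with Propositions~\ref{single-0} and~\ref{single-finite}. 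In case~\ref{positively-non-solvable} ($\delta^*_S(f_S)>0$) the argument is immediate: Corollary~\ref{finite-cor} applied to $\delta^*_S$ makes every $\scrB_d(S)$ finite dimensional, so Question~\ref{four}, hence Question~\ref{five} by Lemma~\ref{four=five}, has a positive answer, while $\scrB_0(S)=\rr$ by Proposition~\ref{single-0}; Scheiderer's result \cite{sc} recalled in the introduction then forbids a finite solution of the moment problem for $S$.

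In case~\ref{zero-pol-non-solvable} ($\delta^*_S(f_S)=0$, $f_S$ a polynomial, generic fibre of genus $\ge1$) the polynomial $f_S\in\scrB_0(S)$ is nonconstant. I would first show that for a suitably generic value $\lambda$ of $f_S$ on $S$ the fibre $S_\lambda:=S\cap\{f_S=\lambda\}$ is a \emph{non-compact} semialgebraic subset of the genus-$\ge1$ affine curve $\{f_S=\lambda\}\subseteq\rr^2$: since $\delta^*_S(f_S)=0$, along the interpolating curves $C_t$ in the proof of Lemma~\ref{single-tentacle-lemma} the function $f_S$ converges to a value that genuinely varies with $t$, so these limit values fill an interval and every interior value is attained arbitrarily far out on $S$. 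Then I would observe that adjoining to a finite description of $S$ the single inequality $-(f_S-\lambda)^2\ge0$ (which carves $S$ down to $S_\lambda$) preserves finite solvability of the moment problem --- a linear functional nonnegative on the enlarged quadratic module is nonnegative on the original one and annihilates the ideal $(f_S-\lambda)$, hence is represented by a measure on $S$ which, annihilating $(f_S-\lambda)^2$, is supported on $S_\lambda$. A finite solution for $S$ would thus yield one for the non-compact positive-genus curve $S_\lambda$, contradicting Scheiderer's non-existence of degree bounds \cite{sc} in that situation.

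Cases~\ref{negatively-solvable} and~\ref{zero-pol-solvable} I would treat via Schm\"udgen's fibre theorem \cite{sch2} applied to suitable bounded polynomials. In case~\ref{negatively-solvable}, $(C_1,C_1)>0$ and, $f_S$ being non-polynomial, every compact curve on $\bar X$ meets $C_1$; hence $C_1$ is ample (Nakai--Moishezon) and $U:=\bar X\setminus C_1$ is affine, with coordinate ring extending $\scrB_0(S)$. Choosing polynomials $h_1,\dots,h_N$ that generate $\scrB_0(S)$ realizes $U$, hence $S\subseteq\cc^2\subseteq U$, by a closed embedding into $\mathbb{A}^N$ whose coordinate functions are bounded on $S$; so the fibre theorem for the bounded tuple $(h_1,\dots,h_N)$ has single-point (or empty) fibres on $S$, whose moment problems are trivial, and yields a finite solution for $S$ directly (so one may even take $V=\varnothing$). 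In case~\ref{zero-pol-solvable}, $(C_1,C_1)=0$, so $|mC_1|$ for $m\gg0$ defines (after resolving the rational singularities of $\bar X$) a fibration $\pi\colon\bar X\to B$ with $C_1$ a fibre, inducing a power of $f_S$ on $\cc^2$; since $\bar X$ is rational, $B\cong\pp^1$, and since the generic fibre of $\pi$ (birational to that of $f_S$) is rational, \emph{every} fibre of $\pi$ is a tree of rational curves. Removing from $S$ a large closed ball $V$, each fibre $(S\setminus V)\cap\{f_S=\lambda\}$ then becomes a finite union of ``ends'' of rational affine curves --- in effect finitely many half-lines --- on which the one-dimensional Hamburger/Stieltjes moment problem is finitely and uniformly solvable (cf.\ \cite{kuma}); Schm\"udgen's fibre theorem then produces a finite solution of the moment problem for $S\setminus V$.

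The step I expect to be the main obstacle is making Schm\"udgen's fibre theorem genuinely applicable in case~\ref{zero-pol-solvable}: one must verify that the fibre moment problems are solved \emph{uniformly in $\lambda$} by generators induced from one fixed finite description of $S\setminus V$, and that the finitely many atypical fibres of $\pi$ (reducible ones, and those over the extreme values of $f_S$ on $S$) become tame once the compact set $V$ has been removed --- in particular that none of them contributes a higher-genus or otherwise obstructive piece to $S$. A secondary technical point is to pin down $f_S$ correctly --- it is the last key form of $\delta^*_S$, computed from the generic degree-wise Puiseux series of $S$ via the algorithm of \cite{contractibility} --- and to read off the genus of its generic fibre from the geometry of the pencil $|mC_1|$ on $\bar X$.
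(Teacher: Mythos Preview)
Your case~\ref{positively-non-solvable} agrees with the paper. Your case~\ref{zero-pol-non-solvable} is a legitimate variant: the paper invokes \cite[Corollary~3.10]{posc} directly, after recording that the fibres $C_\xi=\{f_S=\xi\}$ are generically smooth (Bertini), each has a single point at infinity \cite[Proposition~4.2]{contractibility}, and for an interval of values that point lies in the closure of $S\cap C_\xi$. Your reduction-by-adjoining-$-(f_S-\lambda)^2$ reaches the same contradiction via the curve case of Powers--Scheiderer.

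In case~\ref{negatively-solvable} there is a genuine gap: the assertion ``$f_S$ being non-polynomial'' is not a consequence of $\delta^*_S(f_S)<0$. The last key form can perfectly well be a polynomial with negative $\delta^*_S$-value (all key forms polynomial is one of the two alternatives in the finite-generation proposition), and then your Nakai--Moishezon argument for ampleness of $C_1$ breaks down. The paper avoids this altogether: it picks any $f$ with $\delta^*_S(f)<0$ (Theorem~\ref{positive-thm}), forms a single bounded polynomial $h=u^af^b$ with $\delta^*_S(h)=0$, and applies the fibre theorem \cite{sch2} together with \cite[Theorem~3.11]{posc} to the one-dimensional fibres of $h$ on $S_r=\{u\ge r\}$. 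No ampleness, no point-fibres, no separation of points is needed.

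The main idea you are missing is in case~\ref{zero-pol-solvable}. Because every fibre $C_\xi$ has a \emph{single} point at infinity \cite[Proposition~4.2]{contractibility} and the generic fibre is rational, the Abhyankar--Moh--Suzuki theorem \cite{amoh,suzuki} applies and forces $f_S$ to be a polynomial \emph{coordinate}: there exists $g$ with $\cc[f_S,g]=\cc[u,v]$, and via Jung's theorem one arranges $g\in\rr[u,v]$ with $\rr[f_S,g]=\rr[u,v]$. In $(f_S,g)$-coordinates the fibres of $f_S$ are literally lines, so after removing a compact piece $\{g<r\}$ the set $S_r$ is described by $g\ge r$ and two boundary inequalities, and Schm\"udgen's fibre theorem plus \cite[Theorem~2.2]{kuma} finish at once. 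This completely dissolves the obstacles you flag --- uniformity in $\lambda$, atypical fibres, reducible components --- because there simply are no atypical fibres of a coordinate function. Your fibration-from-$|mC_1|$ route is not wrong in spirit, but it is the hard way around a door that Abhyankar--Moh--Suzuki opens for free.
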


\begin{rem}
The only case not covered by Proposition \ref{single-moment} (for those $S$ which satisfy \ref{single-assumption} and \ref{regular-assumption}) occurs when $\delta^*_S(f_S) = 0$ and $f_S$ is {\em not} a polynomial. As explained at the end of Section \ref{count}, `sets of this kind seem to call for completely new methods.'
\end{rem}

\begin{proof}[Proof of Proposition \ref{single-moment}]
Assertion \ref{positively-non-solvable} follows from Assertion \ref{finite-d} of Corollary \ref{finite-cor} and the discussion following Lemma \ref{four=five}. Assertion \ref{zero-pol-non-solvable} follows from \cite[Corollary 3.10]{posc} coupled with the following observations:  
\begin{enumerate}
\item for all but finitely many $\xi$, the curve $C_\xi := \{f_S = \xi\}$ is smooth (by Bertini's theorem), 
\item $C_\xi$ has only one point at infinity (by \cite[Proposition 4.2]{contractibility}), and
\item there exists a non-empty open interval $I \subseteq \rr$ such that for all $\xi \in \rr$, the point at infinity of $C_\xi$ belongs to the closure (in $\rr\pp^2$) of $S\cap C_\xi$.
\end{enumerate}

For Assertion \ref{partial-solvable}, choose linear coordinates $(u,v)$ on $\rr^2$ such that $u \to \infty$ along the tentacle of $S$ and $(u,v)$ satisfy the hypotheses of Lemma \ref{single-tentacle-lemma} (i.e.\ the point of intersection of the closure $\bar S$ of $S$ in $\rr\pp^2$ and the line at infinity is not on the closure of the line $u = 0$). At first assume we are in the situation of \ref{negatively-solvable}. Then Theorem \ref{positive-thm} implies that there is $f \in \rr[u,v]$ such that $\delta^*_S(f) < 0$. Choose positive integers $a,b$ such that $h := u^a f^b$ satisfies $\delta^*_S(h) = 0$. For each $\xi \in \rr$, let $C_\xi$ be the curve $h= \xi$ and for all $r \geq 0$, let $S_r := \{(u,v) \in S: u \geq r\}$. Then for sufficiently large $r$, we have that
\begin{enumerate}
\item $S_r$ is defined by $\{u \geq r, h_1 \geq 0, h_2 \geq 0\}$, where $h_1, h_2 \in \rr[u,v]$ which `define the boundaries of the tentacle of $S_r$', 
\item $h$ is bounded on $S_r$,
\item $C_\xi$ has a real point at infinity (namely the point in the closure of the line $u=0$) which is not in the closure (in $\rr\pp^2$) of $S_r$,
\item $C_\xi \cap S_r$ does not intersect $\{h_i = 0\}$ for $i \in \{1,2\}$.
\end{enumerate}
Then \cite[Theorem 1]{sch2} and \cite[Theorem 3.11]{posc} imply that $\{u-r,h_1,h_2\}$ solves the moment problem on $S_r$.\\

Now assume the hypotheses of \ref{zero-pol-solvable} are satisfied. Since $C_\xi$ has only one point at infinity for all $\xi$ (by \cite[Proposition 4.2]{contractibility}), it follows from the Abhyankar-Moh-Suzuki theorem (\cite{amoh}, \cite{suzuki}) that $f_S$ is a {\em polynomial coordinate} on $\cc^2$, i.e.\ there exists a polynomial $g \in \cc[u,v]$ such that $\cc[f_S,g] = \cc[u,v]$. It is then not hard to show using Jung's theorem (see e.g.\ \cite{friedland-milnor}) on polynomial automorphisms of the plane that $f_S$ is in fact a polynomial coordinate on $\rr^2$, i.e.\ there exists $g \in \rr[u,v]$ such that $\rr[f_S,g] = \rr[u,v]$. W.l.o.g.\ we may assume that $g \to \infty$ along the tentacle of $S$. Let $S_r := \{(u,v) \in S: g(u,v) \geq r\}$. There exists $h_1, h_2 \in \rr[u,v]$ such that $S_r = \{g \geq r, h_1 \geq 0, h_2 \geq 0\}$ for all sufficiently large $r$. \cite[Theorem 1]{sch2} and \cite[Theorem 2.2]{kuma} then imply that $\{g-r,h_1,h_2\}$ solves the moment 
 problem on $S_r$ for $r$ large enough. This completes the proof of Assertion \ref{partial-solvable}.
\end{proof}

\subsubsection{Explicit construction of single tentacles} \label{construction}
In this subsection we give an algorithm to construct tentacles $S$ with desired behaviour of $\scrB(S)$. The algorithm consists of the construction of a sequence of elements $f_0, \ldots, f_l$, $l \geq 1$, in $\rr[x,x^{-1},y]$ which would be the key forms of the corresponding semidegree $\delta^*_S$. It is a straightforward adaptation of Maclane's construction of key polynomials in \cite{maclane-key}.\\

\paragraph{Initial step:} Set $f_0 := x$ and $f_1 := y$. Pick $\omega_1 \in \Q$ and set $\omega_0 := 1$. \\

\paragraph{Inductive step:} Assume $f_j$'s and $\omega_j$'s have been constructed up to some $k \geq 1$. Let $p_k$ be the {\em smallest} positive integer such that $p_k\omega_k$ is in the additive group generated by $\omega_0, \ldots, \omega_{k-1}$. Then $p_k\omega_k$ can be {\em uniquely} expressed in the form
\begin{align*}
p_k\omega_k = \alpha_{k,0}\omega_0 + \alpha_{k,1} \omega_1 + \cdots + \alpha_{k,k-1}\omega_{k-1}
\end{align*} 
where $\alpha_{k,j}$'s are integers such that $0 \leq \alpha_{k,j} < p_j$ for all $j \geq 1$ (note that there is no restriction on the range of $\alpha_{k,0}$). Pick a non-zero $c_k \in \rr$ and set 
\begin{align*}
f_{k+1} := f_k^{p_k} - c_k \prod_{j=0}^{k-1}f_j^{\alpha_{k,j}}.
\end{align*}
Set $\omega_{k+1}$ to be a rational number less than $p_k\omega_k$. \\

\paragraph{Construction of $S$ from a finite sequence of $f_k$'s:} Assume $f_k$'s and $\omega_k$'s have been constructed up to some $l \geq 1$. Construct $p_l$ and $\alpha_{l,0}, \ldots, \alpha_{l,l-1}$ as in the inductive step. Define   
\begin{align*}
f_{l+1,i} := f_l^{p_l} - c_{l,i} \prod_{k=0}^{l-1}f_k^{\alpha_{l,k}},
\end{align*}
where $c_{l,1}, c_{l,2}$ are distinct real numbers such that each $f_{l+1,i}$ defines a curve $C_i$ on $\rr^2 \setminus y$-axis. 
Let $C^\cc_i$ be the curve defined by $f_{l+1,i}$ in $\cc^2 \setminus y$-axis. Then each $C^\cc_i$ has a unique irreducible branch for which $|x| \to \infty$. It follows that either $C_i$ has a unique branch for which $x \to \infty$, or it has two such branches which come from the same irreducible branch of $C^\cc_i$. In any event, there is a unique `top' branch of $C_i$ for which $x \to \infty$; let us denote it by $C^{top}_i$. Pick $r > 0$ and let $S$ be the region to the right of $x = r$ and bounded by $C^{top}_1$ and $C^{top}_2$. \\

The following is an immediate corollary of the results of the preceding subsection and the observations that $f_0, \ldots, f_l$ are precisely the key-forms of $\delta^*_S$ and $\omega_l = \delta^*_S(f_l)$. Note that the result does not change if we took $S$ to be the region bounded by the `bottom' branches of $C_i$, or if we took corresponding branches of $C_i$ for which $x \to -\infty$. 

\begin{cor} \label{example-cor}
\mbox{}
\begin{enumerate}
\item $\scrB_0(S) = \rr$ iff one of the following is true:
\begin{enumerate}
\item $\omega_l > 0$, or
\item $\omega_l = 0$ and $f_l \not\in \rr[x,y]$ (equivalently, $\omega_l = 0$ and $\alpha_{k,0} < 0$ for some $k$, $1 \leq k \leq l-1$).
\end{enumerate} 
\item $\scrB(S)$ is a finitely generated algebra over $\rr$ iff one of the following conditions hold:
\begin{enumerate}
\item $\omega_l < 0$, or
\item $\omega_l \geq 0$ and $f_k \in \rr[x,y]$ for $1 \leq k \leq l$ (equivalently, $\omega_l \geq 0$ and $\alpha_{k,0} \geq 0$ for $1 \leq k \leq l-1$). 
\end{enumerate} 
\item If $\scrB(S)$ is not a finitely generated algebra over $\rr$, then $\scrB_0(S) = \rr$. Moreover, in this case
\begin{enumerate}
\item if $\omega_l > 0$, then $\scrB_d(S)$ is finite dimensional over $\rr$ for all $d \geq 0$, and
\item if $\omega_l = 0$, then there exists $d > 0$ such that $\scrB_d(S)$ is an infinite dimensional vector space over $\rr$.
\end{enumerate} 
\end{enumerate}
\end{cor}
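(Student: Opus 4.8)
The plan is to reduce everything to Propositions \ref{single-0} and \ref{single-finite}, so that the only substantive work is to justify the observations recorded just before the statement: that the set $S$ produced by the algorithm satisfies \ref{single-assumption} and \ref{regular-assumption}, that the key forms of the associated semidegree $\delta^*_S$ (from Lemma \ref{single-tentacle-lemma}) are exactly $f_0, \ldots, f_l$, and that $\delta^*_S(f_l) = \omega_l$. Granting these, each assertion is a direct translation of a cited result: taking $f_S := f_l$ for the last key form, with $\delta^*_S(f_S) = \omega_l$, part (1) is Proposition \ref{single-0} verbatim; part (2) is the negation of the two conditions of Proposition \ref{single-finite}, where the distinction between finite generation over $\scrB_0(S)$ and over $\rr$ is immaterial (one has $\scrB_0(S) = \rr$ whenever $\scrB(S)$ is not finitely generated over $\scrB_0(S)$ by the ``Moreover'' clause, and in the complementary cases $\scrB_0(S)$ is itself a finitely generated $\rr$-algebra by the finite-generation results of Section \ref{background-subsection}); and part (3) is precisely the ``Moreover'' clause of Proposition \ref{single-finite}. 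The reformulations in terms of the $\alpha_{k,0}$ are dealt with at the end.

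First I would check that $S$ is a regular semialgebraic set with a single tentacle. It is semialgebraic because it is cut out by $x \ge r$ together with finitely many sign conditions on the (Laurent) polynomials $f_{l+1,i}$ that describe the region between $C^{top}_1$ and $C^{top}_2$; it is regular because it is the closure of a nonempty open region bounded by finitely many algebraic arcs; and it has a single tentacle because the two bounding branches do not meet for $|x|$ large --- their Puiseux expansions $v = \phi_i(u)$ already differ in the coefficient of $u^{\omega_l}$, which is proportional to $c_{l,1} \ne c_{l,2}$ --- so $S \setminus B_\rho$ is connected for $\rho \gg 0$.

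The heart of the matter is the identification of $\delta^*_S$, which I would deduce from the theory of key forms of \cite{contractibility} (a reworking of MacLane's key polynomials \cite{maclane-key}). The inductive step of the algorithm is reverse-engineered precisely so that $f_0, \ldots, f_l$ are the key forms of the semidegree $\delta$ on $\cc[x,y]$ attached to the Puiseux datum $\phi(x) + \xi x^{\omega_l}$, where $\phi$ is the common part of the two bounding branches, its coefficients being forced by the choices $c_1, \ldots, c_{l-1}$: the identity $p_k\omega_k = \sum_{j<k}\alpha_{k,j}\omega_j$ says exactly that $f_k^{p_k}$ and $c_k \prod_{j<k} f_j^{\alpha_{k,j}}$ have the same $\delta$-value, and choosing $c_k$ so that their leading coefficients cancel forces $\delta(f_{k+1}) = \omega_{k+1} < p_k\omega_k$; the process stops at $f_l$ because $\omega_l$ is the generic exponent, which makes $f_l$ the first key form whose leading coefficient in the expansion of $f_l(x,\phi(x) + \xi x^{\omega_l})$ in powers of $x$ is a nonconstant polynomial in $\xi$ and hence cannot be cancelled by a product of earlier key forms. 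On the geometric side, the same machinery shows that the curves $C_i := \{f_{l+1,i} = 0\}$ have, at their common point at infinity, top branches whose Puiseux expansions agree in every term of exponent $> \omega_l$ and differ in the coefficient of $u^{\omega_l}$; hence in the notation of Lemma \ref{single-tentacle-lemma} the quantity ``$\omega$'' there equals $\omega_l$ and ``$\phi$'' there equals the above $\phi$, so the generic degree-wise Puiseux series of $S$ is $\phi(u) + \xi u^{\omega_l}$ and therefore $\delta^*_S = \delta$. In particular the key forms of $\delta^*_S$ are $f_0, \ldots, f_l$ and $\delta^*_S(f_l) = \omega_l$, as needed.

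I expect this last identification --- the claim that the constructed $S$ genuinely realizes the intended semidegree --- to be the main obstacle; everything else is bookkeeping or matching notation with the earlier propositions. The only remaining formal point is the equivalence ``$f_l \notin \rr[x,y]$'' $\iff$ ``$\alpha_{k,0} < 0$ for some $k \in \{1, \ldots, l-1\}$'': since the exponents $\alpha_{k,j}$ with $j \ge 1$ are always nonnegative, a genuine negative power of $x$ first enters $f_{k+1}$ exactly when $\alpha_{k,0} < 0$, and because each $f_{k+1}$ is monic in $y$ with $\deg_y(f_k) \mid \deg_y(f_{k+1})$ (\cite[Proposition 3.28]{contractibility}) such a negative power is never destroyed at a later step; thus $f_l \in \rr[x,y]$ iff $f_k \in \rr[x,y]$ for all $k \le l$ iff $\alpha_{k,0} \ge 0$ for all $k \le l-1$. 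Feeding this into Propositions \ref{single-0} and \ref{single-finite} yields the three assertions of the corollary.
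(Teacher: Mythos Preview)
Your proposal is correct and follows exactly the approach the paper takes: the paper's entire proof is the single sentence preceding the corollary, namely that the result ``is an immediate corollary of the results of the preceding subsection and the observations that $f_0, \ldots, f_l$ are precisely the key-forms of $\delta^*_S$ and $\omega_l = \delta^*_S(f_l)$.'' You have simply unpacked what that sentence means, correctly identifying Propositions~\ref{single-0} and~\ref{single-finite} as the relevant preceding results and supplying the justification for the two observations, including the $\alpha_{k,0}$ reformulation and the passage between finite generation over $\scrB_0(S)$ and over $\rr$.
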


\begin{example}\label{exex}
Consider a sequence of key-forms starting with $x,y,y^2 - x^5$ (which corresponds to choices $\omega_1 := 5/2$ and $c_1 := 1$). 
\begin{enumerate}
\item Take $\omega_2 := 1$, $l := 2$, $c_{2,1} :=0$ and $c_{2,2} := 1$. Let $S$ be the region defined by $x \geq 1$, $y \geq 0$, $x \geq y^2 - x^5 \geq 0$. Then $\scrB_0(S) = \rr$ and $\scrB(S)$ is a a finitely generated algebra over $\rr$.
\item Take $\omega_2 := 0$, $l := 2$, $c_{2,1} :=0$ and $c_{2,2} := 1$. Let $S$ be the region defined by $x \geq 1$, $y \geq 0$, $1 \geq y^2 - x^5 \geq 0$. Then $\scrB_0(S) \supsetneq \rr$ and $\scrB(S)$ is a finitely generated algebra over $\rr$. 
\item Take $\omega_2 := 3/2$, $c_2 := 1$, $\omega_3 := 1$, $l := 3$, $c_{3,1} :=0$ and $c_{3,2} := 1$. Let $S$ be the region defined by $x \geq 1$, $y \geq 0$, $x \geq y^2 - x^5 - yx^{-1} \geq 0$. Then $\scrB_0(S) = \rr$ and $\scrB(S)$ is not a finitely generated algebra over $\rr$. But $\scrB_d(S)$ is finite dimensional over $\rr$ for all $d$.
\item Take $\omega_2 := 3/2$, $c_2 := 1$, $\omega_3 := 0$, $l := 3$, $c_{3,1} :=0$ and $c_{3,2} := 1$. Let $S$ be the region defined by $x \geq 1$, $y \geq 0$, $1 \geq y^2 - x^5 - yx^{-1} \geq 0$. Then $\scrB_0(S) = \rr$ and $\scrB(S)$ is not a finitely generated algebra over $\rr$. Moreover, there exists $d > 0$ such that $\scrB_d(S)$ is infinite dimensional over $\rr$.
\end{enumerate}
\end{example}

\subsubsection{Two tentacles which behave like one}
In this subsection we assume that
\begin{enumerate}
\item $\xi$ is an indeterminate,
\item $\phi(x) = \sum_{j=1}^k a_j x^{m_j/2} \in \rr[x^{1/2}, x^{-1/2}]$ for some positive integer $k$ and integers $m_1 > m_2 > \cdots > m_k$ such that $\gcd(m_1, \ldots, m_k) = 1$, and
\item $\omega \in \Q$, $\omega < m_k/2 = \ord_x(\phi)$.
\end{enumerate}
Define 
\begin{align*}
\phi_1(x) &:= \sum_{j=1}^k a_j x^{m_j} + \xi x^{2\omega}, \\
\phi_2(x) &:= \sum_{j=1}^k (-1)^{m_j} a_j x^{m_j} + \xi x^{2\omega}.
\end{align*}

\begin{prop} \label{almost-single}
Let $\tilde S, S_1, S_2$ be semialgebraic subsets of $\rr^2$ which satisfy assumptions \ref{single-assumption} and \ref{regular-assumption}. Assume that the generic degree-wise Puiseux series (see Definition \ref{deg-wise-puiseux}) corresponding to $\tilde S, S_1,S_2$ is respectively $\phi(x) + \xi x^\omega$, $\phi_1(x) + \xi x^{2\omega}$, and $\phi_2(x) + \xi x^{2\omega}$. Set $S=S_1\cup S_2$. Then $\scrB(S)$ is integral over $\scrB(\tilde S)$. In particular, for each of the Questions \ref{one}--\ref{four}, its answer is positive for $S$ iff it is positive for $\tilde S$.
\end{prop}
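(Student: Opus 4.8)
The plan is to relate the degree-like function $\delta_S$ to $\delta_{\tilde S}$ by comparing their normalizations $\bar\delta_S$ and $\bar\delta_{\tilde S}$, and more precisely the associated semidegrees $\delta^*_S$ and $\delta^*_{\tilde S}$. First I would observe that, by Proposition~\ref{general-2-prop}, $\bar\delta_S = \max\{0, \delta^*_{S_1}, \delta^*_{S_2}\}$, while $\bar\delta_{\tilde S} = \max\{0, \delta^*_{\tilde S}\}$. So the task reduces to understanding the relationship between the single semidegree $\delta^*_{\tilde S}$, defined via the generic degree-wise Puiseux series $\phi(x) + \xi x^\omega$, and the pair $\delta^*_{S_1}, \delta^*_{S_2}$, defined via $\phi_i(x) + \xi x^{2\omega}$. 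The key point is that $\phi_1$ and $\phi_2$ are obtained from $\phi$ by the substitution $x \mapsto x^2$ together with the two choices of sign for $x^{1/2} \mapsto \pm x$; this is exactly the standard device (a degree-$2$ base change / Galois conjugation) that turns a Puiseux series with half-integer exponents into a pair of Laurent series.

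Concretely, I would set $\sigma_\pm : \rr[x,y] \to \rr[x,y]$ (or rather a suitable ring map on Laurent polynomials) reflecting the substitution $x \mapsto x^2$, so that evaluating $f$ along the branch of $S_i$ governed by $\phi_i(x^{1/2})$-type data corresponds to evaluating a transformed polynomial along the branch of $\tilde S$ governed by $\phi$. This should yield an identity of the shape $\delta^*_{S_i}(f) = 2\,\delta^*_{\tilde S}(f) $ on the relevant pieces (up to the bookkeeping coming from $u$ versus $x$ and from clearing denominators), and hence $\bar\delta_S = \max\{0,\delta^*_{S_1},\delta^*_{S_2}\}$ agrees, up to the factor $2$, with $\bar\delta_{\tilde S} = \max\{0,\delta^*_{\tilde S}\}$. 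Because a constant rescaling of a degree-like function does not affect finite generation of the associated graded algebra or module-finiteness (passing to $\cc[x,y]^{2\delta}$ is an integral extension of $\cc[x,y]^{\delta}$, exactly as in the proof of the finite-generation proposition above), it follows that $\scrB(S)$ is integral over $\scrB(\tilde S)$: each graded piece $\scrB_d(S) = \bar\scrB_d(S)$ is contained in $\bar\scrB_{\lceil d/1\rceil}(\tilde S)$ up to rescaling, and the generators $x, y$ and the key forms produce integral relations. The "in particular" clause then follows because integral extensions preserve finite generation of algebras (Artin--Tate) and of modules, and because $\scrB_0(S) = \rr$ iff $\scrB_0(\tilde S) = \rr$ under an integral extension of graded rings with $\scrB_0 \subseteq \scrB$ integral.

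The main obstacle I expect is making the correspondence between the branches precise: $S = S_1 \cup S_2$ has a priori two tentacles, and I must check that the two semidegrees $\delta^*_{S_1}$ and $\delta^*_{S_2}$ are genuinely the two "sign-conjugates" of the single semidegree $\delta^*_{\tilde S}$, i.e.\ that no information is lost or doubled when passing from the half-integer Puiseux data of $\tilde S$ to the integer Laurent data of $S_1, S_2$. This requires unwinding Lemma~\ref{single-tentacle-lemma} carefully: one must verify that $\max\{\delta^*_{S_1}(f), \delta^*_{S_2}(f)\}$ equals $2\delta^*_{\tilde S}(f)$ for \emph{every} $f$, not merely generically, which amounts to checking that the leading $\xi$-polynomial $f_0(\xi)$ appearing in the expansion along $\tilde S$ cannot vanish simultaneously at both conjugate specializations unless it vanishes identically — a Galois-theoretic statement about the irreducible branch. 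Once that is in place, the rest is the routine integral-extension argument. I would also record that the condition $\gcd(m_1,\dots,m_k)=1$ is what guarantees the Puiseux series for $\tilde S$ genuinely has ramification index $2$ (so that the two branches of $S$ are distinct and together account for a degree-$2$ cover), which is why it appears in the hypotheses.
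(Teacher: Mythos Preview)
Your core idea --- the degree-$2$ base change $(x,y)\mapsto(x^2,y)$ producing the two sign-conjugate branches --- is exactly what the paper uses. The paper's proof is two lines: it observes that $\delta^*_{S_1}$ and $\delta^*_{S_2}$ are the two \emph{extensions} of $\delta^*_{\tilde S}$ under the pullback $f^*:g\mapsto g(x^2,y)$, and then invokes an external lemma (\cite[Lemma~A.3]{subalsection}) to conclude that $\scrB(S)$ is the integral closure of $f^*\scrB(\tilde S)$.

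Where your sketch goes wrong is in the stated relation between the semidegrees. You write $\delta^*_{S_i}(f)=2\,\delta^*_{\tilde S}(f)$ and later propose to verify $\max\{\delta^*_{S_1}(f),\delta^*_{S_2}(f)\}=2\,\delta^*_{\tilde S}(f)$ for \emph{every} $f\in\rr[x,y]$. This is false: take $f=x$, where both sides are $1$ on the left and $2$ on the right. The correct identity is $\delta^*_{S_i}\bigl(g(x^2,y)\bigr)=2\,\delta^*_{\tilde S}(g)$, i.e.\ the relation holds only after pullback. Consequently the comparison between $\scrB(S)$ and $\scrB(\tilde S)$ is not a direct inclusion of subalgebras of $\rr[x,y,t]$ but goes through the ring map $f^*$ (with a compatible rescaling of the grading). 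Your ``main obstacle'' paragraph is therefore chasing a phantom: the issue is not whether some leading $\xi$-polynomial vanishes at both conjugate specializations, but simply that $\delta^*_{S_1},\delta^*_{S_2}$ are all the extensions of $\delta^*_{\tilde S}$ to $\rr[x,y]$ over $\rr[x^2,y]$ (this is where $\gcd(m_1,\dots,m_k)=1$ enters, ensuring genuine ramification index~$2$). Once that is said, integrality is immediate because $\rr[x,y]$ is free of rank~$2$ over $\rr[x^2,y]$, and the graded version follows from the valuation-theoretic lemma the paper cites. If you want to avoid that citation, the direct route is: write any $h\in\rr[x,y]$ as $h_0(x^2,y)+xh_1(x^2,y)$, check that $\bar\delta_S(h)\geq \max\{2\bar\delta_{\tilde S}(h_0),\,1+2\bar\delta_{\tilde S}(h_1)\}$ (this is where the two conjugate branches are used, since a cancellation of the leading term in one branch forces it to survive in the other), and deduce that $h\cdot t^d\in\scrB(S)$ satisfies a monic quadratic over $f^*\scrB(\tilde S)$.
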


\begin{proof}
Consider the map $(x,y) \mapsto (x^2,y)$. Then it is not hard to see that $\delta^*_{S_1}$ and $\delta^*_{S_2}$ extend $\delta^*_{\tilde S}$ via the pull-back by $f$. \cite[Lemma A.3]{subalsection} then shows that $\scrB(S)$ is the integral closure of $f^*\scrB(\tilde S)$. The proposition follows immediately.
\end{proof}

\begin{example}[The example of Section \ref{count} revisited]
The generic degree-wise Puiseux series corresponding to $S_1,S_2$ of Section \ref{count} are respectively
\begin{align*}
\tilde \phi_1(x,\xi) &:= - x^3 + x^{-2} + \xi x^{-3},\\
\tilde \phi_2(x,\xi) &:= x^3 + x^{-2} + \xi x^{-3}.
\end{align*}  
Let $\tilde S$ be a tentacle with generic degree-wise Puiseux series $\tilde \phi(x, \xi) := x^{3/2} + x^{-1} + \xi x^{-3/2}$; we may construct such $\tilde S$ using the procedure in Section \ref{construction}; e.g.\ take $\tilde S$ to be the set defined by $x \geq 1,\ y \geq 0,\ 1 \geq y^2 - x^3 - 2yx^{-1} \geq 0$. Then it follows exactly as in the last case of Example \ref{exex} that $\scrB_0(\tilde S) = \rr$ and there exists $d > 0$ such $\scrB_d(\tilde S)$ is infinite dimensional over $\rr$. Proposition \ref{almost-single} therefore implies that the same is true for $S := S_1 \cup S_2$, as it was indeed shown in Section \ref{count}.  
\end{example}
\section{Bounded polynomials}\label{bound}

In this last section we show how to interpret the algebra $\B(S)$ as the algebra $\B_0(S')$ of another set $S'$. In this way, we can produce examples and  counterexamples to Question \ref{three} from counterexamples to Question \ref{one}.

\begin{proposition}
Let $S\subseteq \R^n$ be a set. Define $$S':=\left\{ (a,s)\in \R^{n+1}\mid a\in S, \Vert a\Vert \geq 1, \Vert a\Vert^2s^2\leq 1\right\}.$$ Then $\B(S) \cong \B_0(S')$ (via the identification $p\in \B_d(S) \leftrightarrow p\cdot t^d $, where $t$ is the last coordinate function on $\rr^{n+1}$). 
\end{proposition}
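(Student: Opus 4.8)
The plan is to show the two algebras coincide by tracking a single polynomial $p\in\R[x]$ across the identification $p\cdot t^d \leftrightarrow p\cdot t^d$ and checking that $p\in\B_d(S)$ if and only if $p\cdot t^d\in\B_0(S')$. Since both sides are graded ($\B(S)$ by the $t$-degree, and $\B_0(S')$ inherits a grading from the $t$-grading on $\R[x,t]$ because the defining inequality $\Vert a\Vert^2 s^2\le 1$ is homogeneous of degree $0$ in the pair $(\Vert a\Vert, s)$ — more precisely, boundedness of $p\cdot t^d$ on $S'$ forces $p$ to be bounded on the appropriate locus), it suffices to prove the equivalence degree by degree. First I would observe that on $S'$ we have $|t|\le 1/\Vert a\Vert \le 1$, and conversely for every $a\in S$ with $\Vert a\Vert\ge 1$ the point $(a, 1/\Vert a\Vert)$ lies in $S'$; so $S'$ is essentially the graph-like region $\{(a,s): a\in S,\ \Vert a\Vert\ge 1,\ |s|\le 1/\Vert a\Vert\}$.

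The key computation is the following. Suppose $p\in\B_d(S)$, so $p^2\le q$ on $S$ for some $q\in\R[x]_{2d}$; as remarked in the introduction we may take $q = C + D\Vert x\Vert^{2d}$. Then on $S'$,
\begin{align*}
(p t^d)^2 = p^2 t^{2d} \le (C + D\Vert a\Vert^{2d}) t^{2d} = C t^{2d} + D(\Vert a\Vert^2 t^2)^d \le C + D,
\end{align*}
using $|t|\le 1$ and $\Vert a\Vert^2 t^2\le 1$ on $S'$. Hence $p t^d$ is bounded on $S'$, i.e. $p t^d\in\B_0(S')$. Conversely, suppose $p t^d\in\B_0(S')$, say $|p t^d|\le M$ on $S'$. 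For any $a\in S$ with $\Vert a\Vert\ge 1$, plug in $s = 1/\Vert a\Vert$ (legal since then $\Vert a\Vert^2 s^2 = 1\le 1$) to get $|p(a)|/\Vert a\Vert^d\le M$, i.e. $p(a)^2\le M^2\Vert a\Vert^{2d}$ for all $a\in S$ with $\Vert a\Vert\ge 1$. On the compact set $S\cap\{\Vert a\Vert\le 1\}$, $p^2$ is bounded by some constant $C'$, so $p^2\le C' + M^2\Vert x\Vert^{2d}$ on all of $S$ (absorbing via $C'\le C'\Vert x\Vert^{2d}$ where $\Vert x\Vert\ge 1$, or just taking the sum). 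Since $C' + M^2\Vert x\Vert^{2d}\in\R[x]_{2d}$, we conclude $p\in\B_d(S)$. This establishes $\B_d(S) = \{p : p t^d\in\B_0(S')\}$ for each $d$.

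It then remains to package this into a ring isomorphism. I would define $\Phi\colon\B(S)\to\B_0(S')$ on the dense-sum decomposition by sending $p t^d\in\B_d(S)\subseteq\R[x,t]$ to the same element $p t^d\in\R[x,t]$, viewed now as a function on $S'$; the degreewise equivalence above shows $\Phi$ lands in $\B_0(S')$ and is injective (it is just restriction of the inclusion $\R[x,t]\hookrightarrow$ functions on $S'$). Surjectivity requires showing every bounded polynomial on $S'$ is a (finite) sum of terms $p t^d$ with $p t^d$ bounded on $S'$; this is where I expect the only real subtlety. The point is that $S'$ is full-dimensional and its closure at infinity meets the locus $\{t = 0\}$ transversally in enough directions (the tentacles of $S$ survive in $S'$), so that for $g = \sum_d g_d(x) t^d\in\R[x,t]$ bounded on $S'$, each graded piece $g_d(x)t^d$ must itself be bounded — one extracts $g_d$ by a homogeneity/scaling argument in the $t$-direction along curves of the form $(a(\lambda), c/\Vert a(\lambda)\Vert)$ for varying constants $c$, much as in Proposition \ref{tendeg}, using that distinct powers of $t$ cannot cancel their growth. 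Once the grading is established, surjectivity is immediate. The main obstacle is thus precisely this claim that $\B_0(S')$ is a graded subalgebra of $\R[x,t]$ with $d$-th piece $\{p t^d : p\in\B_d(S)\}$; everything else is the elementary inequality manipulation above.
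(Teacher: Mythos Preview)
Your proof is correct and follows essentially the same route as the paper: the forward inclusion via $|p|\le D\Vert x\Vert^d$ and the extraction of $p\in\B_d(S)$ from $pt^d\in\B_0(S')$ by plugging in $s=1/\Vert a\Vert$ are identical to the paper's. The surjectivity step you correctly flag as the only subtlety is handled in the paper by exactly the mechanism you sketch --- varying $s = r/\Vert a\Vert$ over $r\in[0,1]$ for each fixed $a\in S$ --- and made precise by the short Lemma~\ref{help}: a univariate polynomial of degree $d$ bounded by $C$ on $[0,1]$ has all its coefficients bounded by a constant depending only on $C$ and $d$.
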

\begin{proof} We can assume that $S$ contains only points with $\Vert a\Vert\geq 1.$
For ``$\subseteq$'' we start with $p\in \B_d(S)$. This implies $\vert p\vert \leq D\Vert x\Vert^d$ on $S$, for some large enough $D$. So for $(a,s)\in S'$ we have $$\vert p(a)s^d\vert \leq \vert p(a)\vert \frac{1}{\Vert a\Vert^d}\leq D.$$ So $p\cdot t^d\in \B_0(S'),$ which proves the claim. For ``$\supseteq$'' take $q=\sum_{i=0}^dp_i(x)t^i\in \B_0(S').$ There is some $C$ such that $$\left\vert\sum_{i=0}^d \frac{p_i(a)}{\Vert a\Vert^i}r^i \right\vert=\left\vert q\left(a,\frac{r}{\Vert a\Vert}\right)\right\vert\leq C$$ for all $a\in S, r\in[0,1].$ From Lemma \ref{help} below it follows that there is some $D$ such that $$\left\vert\frac{p_i(a)}{\Vert a\Vert^i}\right\vert\leq D$$ for all $a\in S,$ and this implies $p_i\in\B_i(S),$ and thus $q\in \B(S).$
\end{proof}
\begin{lemma}\label{help}
	If  a univariate polynomial $p\in \R[t]$ fulfills $\vert p\vert \leq C$ on $[0,1]$, then the size of the coefficients of $p$ is bounded by a constant depending only on $\deg(p)$ and $C.$\end{lemma}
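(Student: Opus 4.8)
The plan is to exploit the fact that $[0,1]$ contains $\deg(p)+1$ distinct points and use the invertibility of the Vandermonde matrix, thereby converting a uniform bound on values into a bound on coefficients. Write $p(t) = \sum_{i=0}^d a_i t^i$ with $d = \deg(p)$. First I would fix once and for all the $d+1$ interpolation nodes $t_j := j/d$ for $j = 0,\ldots,d$ (any choice of $d+1$ distinct points in $[0,1]$ works; these depend only on $d$). The hypothesis gives $|p(t_j)| \leq C$ for every $j$.

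Next I would recall Lagrange interpolation: since $p$ has degree at most $d$ and is determined by its values at the $d+1$ nodes, we have the identity
\begin{align*}
p(t) = \sum_{j=0}^d p(t_j) \prod_{k \neq j} \frac{t - t_k}{t_j - t_k}.
\end{align*}
Expanding each product $\prod_{k\neq j}(t-t_k)/(t_j-t_k)$ as a polynomial in $t$ yields coefficients $\ell_{j,i}$ that depend only on the nodes, hence only on $d$. Thus $a_i = \sum_{j=0}^d \ell_{j,i}\, p(t_j)$, and therefore
\begin{align*}
|a_i| \leq \sum_{j=0}^d |\ell_{j,i}|\, |p(t_j)| \leq C \sum_{j=0}^d |\ell_{j,i}| =: C \cdot M_d,
\end{align*}
where $M_d := \max_{0\le i\le d}\sum_{j=0}^d |\ell_{j,i}|$ depends only on $d$. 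This is exactly the asserted bound. (Equivalently, one phrases this as: the linear map sending a degree-$\leq d$ polynomial to its vector of values at $t_0,\ldots,t_d$ is a bijection between finite-dimensional spaces, so its inverse is bounded, with operator norm depending only on $d$.)

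There is essentially no obstacle here; the only point requiring a word of care is that the statement as used must be uniform over all $p$ with a fixed degree bound, which is why the nodes are chosen independent of $p$ and the constant $M_d$ is extracted from the (fixed) inverse Vandermonde data rather than from $p$ itself. One should also note that the lemma is applied in the proof of the preceding proposition with $p$ replaced by the univariate polynomial $r \mapsto \sum_{i=0}^d (p_i(a)/\Vert a\Vert^i) r^i$, whose degree is at most $d$ regardless of $a$; so the constant $D$ produced depends only on $d$ and $C$, as needed. If one prefers to avoid even naming the nodes, an alternative is to argue by contradiction and compactness: the unit sphere $\{\sum |a_i| = 1\}$ in the space of degree-$\leq d$ polynomials is compact, the function $p \mapsto \sup_{[0,1]}|p|$ is continuous and strictly positive on it (a nonzero polynomial of degree $\leq d$ cannot vanish identically on $[0,1]$), hence bounded below by some $\varepsilon_d > 0$, and rescaling gives $\sum|a_i| \leq C/\varepsilon_d$. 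Either route is short; I would present the Lagrange-interpolation version since it is fully explicit.
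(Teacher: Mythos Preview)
Your proof is correct and follows essentially the same idea as the paper: evaluate $p$ at $d+1$ fixed points in $[0,1]$ and use the invertibility of the resulting Vandermonde system to bound the coefficients in terms of $C$ and $d$. The paper chooses the nodes $1/n$ for $n=1,\ldots,d+1$ and phrases the conclusion geometrically (the coefficient vector lies in a polytope with linearly independent normals, hence compact), whereas you invert explicitly via Lagrange interpolation; the content is the same.
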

\begin{proof}
Write $p=\sum_{i=0}^d p_it^i.$ We have $$-C\leq \sum_i p_i \left(\frac{1}{n}\right)^i\leq C$$ for all $n\in \N$. So the coefficient tuple $(p_0,\ldots,p_d)$ of $p$ lies in a polytope whose normal vectors are  $\pm\left(1,\frac1n, \frac{1}{n^2},\ldots,\frac{1}{n^d}\right),$ for $n=1,\ldots,d+1$. Since these vectors are linearly independent, this polytope is compact.\end{proof}

So from examples where $\B(S)$ is not finitely generated, we obtain new examples for which $\B_0(S')$ is not finitely generated. For example, we get  new regular examples in dimension three from the examples in Section \ref{count} and Section \ref{plane}.

\begin{bibdiv}
\begin{biblist}


\bib{amoh}{article}{
    AUTHOR = {S. S. Abhyankar and T. T. Moh},
     TITLE = {Embeddings of the line in the plane},
   JOURNAL = {J. Reine Angew. Math.},
    VOLUME = {276},
      YEAR = {1975},
     PAGES = {148--166},
}

\bib{cimane}{article}{
 AUTHOR = {J. Cimpri\v{c} and M. Marshall and T. Netzer},
 TITLE={Closures of quadratic modules},
 JOURNAL={Israel J. Math},
 YEAR={to appear},
}

\bib{friedland-milnor}{article}{
    AUTHOR = {S. Friedland and J. Milnor},
     TITLE = {Dynamical properties of plane polynomial automorphisms},
   JOURNAL = {Ergodic Theory Dynam. Systems},
    VOLUME = {9},
      YEAR = {1989},
}

\bib{havi}{article}{
AUTHOR={E. K. Haviland},
TITLE={On the moment problem for distribution
functions in more than one dimension II}, 
JOURNAL ={Amer. J. Math.},
VOLUME={58},
YEAR={1936},
PAGES={164--168},
}

\bib{kr}{article}{
AUTHOR={S. Krug},
TITLE={Geometric interpretations of a counterexample to HilbertÕs 14th problem and rings of bounded polynomials on semialgebraic sets},
JOURNAL={Preprint},
YEAR={2011},
}

\bib{kuma}{article}{
AUTHOR={S. Kuhlmann and M. Marshall},
TITLE={Positivity, sums of squares and the multi-dimensional moment problem},
JOURNAL={Trams. Amer. Math. Soc.},
VOLUME={354},
NUMBER={11},
YEAR={2002},
PAGES={4285--4301},
}

\bib{lazarsfeld}{article}{
	AUTHOR ={R. Lazarsfeld},
	TITLE  = {Lectures on linear series},
BOOKTITLE  = {Complex algebraic geometry ({P}ark {C}ity, {UT}, 1993)},
    VOLUME = {3},
     PAGES = {161--219},
      YEAR = {1997},
}

\bib{maclane-key}{article}{
    AUTHOR = {S. MacLane},
     TITLE = {A construction for absolute values in polynomial rings},
   JOURNAL = {Trans. Amer. Math. Soc.},
    VOLUME = {40},
      YEAR = {1936},
    NUMBER = {3},
     PAGES = {363--395},
  }

\bib{ma}{book}{,
    AUTHOR = {M. Marshall},
     TITLE = {Positive polynomials and sums of squares},
    SERIES = {Mathematical Surveys and Monographs},
    VOLUME = {146},
 PUBLISHER = {American Mathematical Society},
   ADDRESS = {Providence, RI},
      YEAR = {2008},
     PAGES = {xii+187},
 }

\bib{contractibility}{article}{
AUTHOR={P. Mondal},
TITLE={An effective criterion for algebraic contractibility of rational curves},
JOURNAL={Preprint},
YEAR={2013},
}

\bib{non-negative-valuation}{article}{
AUTHOR={P. Mondal},
TITLE={How to determine the \em sign of a valuation on $\C[x,y]$? },
JOURNAL={Preprint},
YEAR={2013},
}

\bib{subalsection}{article}{
AUTHOR={P. Mondal},
TITLE={Is the intersection of two finitely generated subalgebras of a polynomial ring also finitely generated?},
JOURNAL={Preprint},
YEAR={2013},
}

\bib{sub1}{article}{
AUTHOR={P. Mondal},
TITLE={Projective completions of affine varieties via degree-like functions?},
JOURNAL={Preprint},
YEAR={2010},
}

\bib{ne}{article}{
AUTHOR={T. Netzer},
TITLE={Stability of quadratic modules},
JOURNAL={Manuscr. math.},
VOLUME={129},
NUMBER={2},
PAGES={251--271},
YEAR={2009},
} 

\bib{ne2}{article}{
AUTHOR={T. Netzer},
TITLE={An elementary proof of Schm\"udgen's theorem on the moment problem of closed semialgebraic sets},
JOURNAL={Proc.  Amer. Math. Soc.},
VOLUME={136},
PAGES={529--537},
YEAR={2008},
}

\bib{pl}{book}{
AUTHOR={D. Plaumann},
TITLE={Bounded polynomials, sums of squares and the moment problem},
SERIES={Phd-thesis, University of Konstanz},
PUBLISHER={Konstanz Online Publikation System},
YEAR={2008},
}

\bib{plsc}{article}{
AUTHOR={D. Plaumann and C. Scheiderer},
TITLE={The ring of bounded polynomials on a semi-algebraic set},
JOURNAL={Trans. Am. Math. Soc.},
VOLUME={364},
PAGES={4663--4682},
YEAR={2012},
}

\bib{posc}{article}{
AUTHOR={V. Powers and C. Scheiderer},
TITLE={The moment problem for non-compact semialgebraic sets},
JOURNAL={Adv. Geom.},
VOLUME={1}, 
PAGES={71--88},
YEAR={2001},
}

\bib{sc}{article}{
AUTHOR={C. Scheiderer},
TITLE={Non-existence of degree bounds for weighted sums of squares representations},
JOURNAL={J. Complexity},
VOLUME={21},
PAGES={823--844},
YEAR={2005},
}

\bib{sch1}{article}{
AUTHOR={K. Schm\"udgen},
TITLE={The $K$-moment problem for compact
semi-algebraic sets}, 
JOURNAL={Math. Ann.},
VOLUME={289},
YEAR={1991}, 
PAGES={203--206},
}

\bib{sch2}{article}{
AUTHOR={K. Schm\"udgen},
TITLE={On the moment problem of closed
semi-algebraic sets}, 
JOURNAL={J. reine angew. Math.},
VOLUME={558},
YEAR={2003}, 
PAGES={225--234},
}

\bib{suzuki}{article}{
    AUTHOR = {M. Suzuki},
     TITLE = {Propri\'et\'es topologiques des polyn\^omes de deux variables
              complexes, et automorphismes alg\'ebriques de l'espace {${\bf
              C}^{2}$}},
   JOURNAL = {J. Math. Soc. Japan},
  FJOURNAL = {Journal of the Mathematical Society of Japan},
    VOLUME = {26},
      YEAR = {1974},
     PAGES = {241--257},
}

\bib{szpiro}{article}{,
    AUTHOR = {L. Szpiro},
     TITLE = {Fonctions d'ordre et valuations},
   JOURNAL = {Bull. Soc. Math. France},
  FJOURNAL = {Bulletin de la Soci\'et\'e Math\'ematique de France},
    VOLUME = {94},
      YEAR = {1966},
     PAGES = {301--311},
 
}

\bib{vi}{article}{
AUTHOR={C. Vinzant},
TITLE={Real radical initial ideals}, 
JOURNAL={Journal of Algebra},
VOLUME={352},
NUMBER={1},
YEAR={2012}, 
PAGES={392Ð-407}, }

\end{biblist}
\end{bibdiv}
 
\end{document}